\documentclass{amsart}

\usepackage[utf8]{inputenc}

\usepackage{xcolor}
\usepackage{tikz}
\usetikzlibrary{arrows.meta}
\usetikzlibrary{calc}

\pgfmathsetseed{3}

\usepackage{amssymb}
\usepackage{amsmath}
\usepackage{amsfonts}
\usepackage{tikz-cd}
\usetikzlibrary{arrows}
\usepackage{relsize}
\usepackage{thmtools, thm-restate}

\usepackage{pictex}

\usepackage{graphicx}
\graphicspath{ {./images/} }
\usepackage{wrapfig}
\usepackage[font=footnotesize]{caption}

\tikzset{
curvarr/.style={
  to path={ -- ([xshift=2ex]\tikztostart.east)
    |- (#1) [near end]\tikztonodes
    -| ([xshift=-2ex]\tikztotarget.west)
    -- (\tikztotarget)}
  }
}

\tikzset{
  curvedlink/.style={
    to path={
      let \p1=(\tikztostart.east), \p2=(\tikztotarget.west),
      \n1= {abs(\y2-\y1)/4} in
      (\p1) arc(90:-90:\n1) -- ([yshift=2*\n1]\p2) arc (90:270:\n1)
    },
  }
}

\usepackage{amsthm}
\usepackage{enumitem}
\usepackage{mathrsfs}
\usetikzlibrary{decorations.pathmorphing}
\usepackage[utf8]{inputenc}
\usepackage[T1]{fontenc}
\usepackage{indentfirst}
\usepackage{xcolor}
\usepackage{mathtools}
\usepackage{bm}
\usepackage{stmaryrd}
\usepackage{lineno}
\usepackage{hyperref}
\hypersetup{
    colorlinks,
    citecolor=black,
    filecolor=black,
    linkcolor=black,
    urlcolor=black
}

\usepackage{pgfplots}
\pgfplotsset{compat=1.18}
\usepackage{tikz}

\newtheorem{theorem}{Theorem}[section]
\newtheorem{lemma}[theorem]{Lemma}

\newtheorem{conjecture}[theorem]{Conjecture}
\newtheorem{proposition}[theorem]{Proposition}

\theoremstyle{definition}
\newtheorem{define}[theorem]{Definition}
\newtheorem{example}[theorem]{Example}

\newtheorem{convention}[theorem]{Convention}

\newtheorem{notation}[theorem]{Notation}

\newcommand{\GR}{K_0(\text{Var}_{\mathbb{C}})}
\newcommand{\M}{\mathcal{M}_{\mathbb{C}}}

\theoremstyle{remark}
\newtheorem{remark}[theorem]{Remark}

\numberwithin{equation}{section}

\DeclareMathOperator{\Der}{Der}

\DeclareMathOperator{\ann}{ann}

\DeclareMathOperator{\Div}{Div}

\DeclareMathOperator{\reg}{reg}

\DeclareMathOperator{\red}{red}

\DeclareMathOperator{\ord}{ord}
\DeclareMathOperator{\supp}{supp}
\DeclareMathOperator{\Sing}{Sing}
\DeclareMathOperator{\trace}{trace}
\DeclareMathOperator{\mot}{mot}
\DeclareMathOperator{\topol}{top}
\DeclareMathOperator{\Pic}{Pic}
\DeclareMathOperator{\CC}{\mathbb{C}}
\DeclareMathOperator{\PP}{\mathbb{P}}
\DeclareMathOperator{\QQ}{\mathbb{Q}}
\DeclareMathOperator{\LL}{\mathbb{L}}
\DeclareMathOperator{\Poles}{Poles}
\DeclareMathOperator{\Zeroes}{Zeroes}

\usepackage [english]{babel}
\usepackage [autostyle, english = american]{csquotes}
\MakeOuterQuote{"}

\author[D.~Bath]{Daniel Bath}
\address{\linebreak
  Daniel Bath\\
  Departement Wiskunde\\
  KU Leuven\\
  3001 Leuven, Belgium
}
\email{\href{mailto:dan.bath@kuleuven.be}{dan.bath@kuleuven.be}}
\thanks{DB was supported by FWO grant \#1282226N}

\author[W.~Veys]{Willem Veys}
\address{\linebreak
  Willem Veys\\
  Departement Wiskunde\\
  KU Leuven\\
  3001 Leuven, Belgium
}
\email{\href{mailto:wim.veys@kuleuven.be}{wim.veys@kuleuven.be}}
\thanks{WV was supported by KU Leuven Grant GYN-E4282-C16/23/010}

\title[Strong Monodromy Conjecture for certain homogeneous polynomials]{The Strong Monodromy Conjecture for a class of homogeneous polynomials in three variables}

\begin{document}
\sloppy
\begin{abstract}
We consider the class of all homogeneous, possibly non-reduced, polynomials $f$ whose associated reduced projective divisor $D_{\red} \subset \PP^{n-1}$ has (at worst) quasi-homogeneous isolated singularities. In an arbitrary number of variables $n$ and with $d$ denoting the degree of $f$, we characterize when $-n/d$ is a root of the Bernstein--Sato polynomial of $f$ in terms of elementary data involving logarithmic derivations. When we restrict to three variables, we prove the resulting class of polynomials satisfies the Strong Monodromy Conjecture, in the motivic sense.
\end{abstract}

\maketitle


\section{Introduction}

Consider a nonconstant polynomial $f \in R = \CC[x_1, \dots, x_n]$. The world of algebraic differential operators produces the Bernstein--Sato polynomial $b_f(s) \in \CC[s]$ of $f$. The Bernstein--Sato polynomial certainly has finitely many roots and these encode singularity data of $f$ relating to Milnor fibers, mixed Hodge modules, birational geometry, and more. The world of motivic integration produces the motivic zeta function $Z_f^{\mot}(s) \in \GR[\LL^{-1}][[\LL^{-s}]]$. Despite the ambient ring, Denef and Loeser \cite{DenefLoeser1998} proved that the motivic zeta function has finitely many `poles' which are all subtle singularity invariants, including, but not limited to, the negative of the log canonical threshold.

The Strong Monodromy Conjecture is wide open and promises a sharp connection between the world of differential operators and the world of motivic integration. The conjecture is: every pole of the motivic zeta function is a root of the Bernstein--Sato polynomial. There are many versions of this conjecture, including a local version  which involves a germ of $f$ and a Stronger Monodromy Conjecture, which incorporates the statement `the order of a pole must be at most its multiplicity as a root.'

In $\mathbb{A}^2$ the Strong Monodromy Conjecture, as well as this stronger variant, is true. In 1988 Loeser \cite{Loeser88} proved all but the case of order/multipicity for non-reduced $f$; in 2024 Blanco \cite{Blanco2024} completed the story.

In $\mathbb{A}^3$ or higher dimension almost nothing is known, with a few caveats. For hyperplane arrangements in $\mathbb{A}^3$ these conjectures hold (except the statement about order/multiplicity for non-reduced arrangements, which remains open), by the combined labour of \cite{BudurSaitoYuzvinsky, SaitoArrangements, uliInventiones}.
They also hold for isolated semi-quasi-homogeneous singularities in $n$ variables  \cite{BlancoBudurvdVeer}, and for a certain subclass of Newton non-degenerate polynomials in $n$ variables satisfying non-resonance conditions \cite{LoeserNewtonNonDegenerate}.

Restricting to homogeneous $f$ in three variables, the caveats become: nothing is known unless $f$ defines a hyperplane arrangement, a homogeneous isolated singularity, or a special sort of Newton non-degenerate polynomial. The first two cases are extremal instances of a class of well-studied homogeneous polynomials: those whose attached reduced projective divisor $D_{\red} \subset \PP^2$ has only quasi-homogeneous singularities. Equivalently, locally everywhere the Milnor and Tjurina numbers of $D_{\red}$ agree, see \cite{KSaitoIsolated}. Our main result is certification of the Strong Monodromy Conjecture for this class of polynomials.

\begin{restatable}{introTheorem}{thmMain} \label{thm-SMCinourCase}
    Let $f \in \CC[x_1, x_2, x_3] \setminus \CC$ be homogeneous with $D_{\red} \subset \PP^2$ the attached reduced projective curve. Suppose $D_{\red}$ has, at worst, quasi-homogeneous singularities. Then $f$ satisfies the Strong Monodromy Conjecture. Also, $f$ satisfies the local Strong Monodromy Conjecture at $0$.
\end{restatable}

\begin{convention}
    Throughout the manuscript, we never assume our polynomials or divisors are reduced, unless explicitly stated. In particular, the $f$ in Theorem \ref{thm-SMCinourCase} may be non-reduced.
\end{convention}

\bigskip

We now explain our strategy. First of all, we need a `practical' criterion for the Strong Monodromy Conjecture. While $Z_f^{\mot}(s)$ is defined using contact loci, Denef and Loeser \cite{DenefLoeser1998} gave a formula in terms of an embedded resolution of $\Div(f)$. This leads to the following criterion for certain $f$, which is probably known to specialists. (Note: the `furthermore' statement amounts to quoting previously known results.)

\begin{restatable}{introProp}{PropA} \label{prop-SMCCriterion}
    Let $f \in \mathbb{C}[x_1, \dots, x_n] \setminus \CC$ be homogeneous of degree $d$. Let $D \subset \PP^{n-1}$ be the attached divisor and $D_{\red}$ its reduced form. Suppose that $D_{\red}$ has only isolated singularities and that at each of these points $D$ satisfies the local Strong Monodromy Conjecture. Now consider the following condition:
    \begin{equation} \label{eqn-SMCCriterion}
        \bigg[ -\frac{n}{d} \in \Poles(Z_f^{\mot}(s)) \bigg] \implies \bigg[ -\frac{n}{d} \in \Zeroes(b_f(s)) \bigg].
    \end{equation}
If \eqref{eqn-SMCCriterion} holds, then $f$ satisfies the Strong Monodromy Conjecture. Similarly, if the local at $0$ version of \eqref{eqn-SMCCriterion} holds, then $f$ satisfies the local Strong Monodromy Conjecture at $0$.

Furthermore, the assumptions on $D$ are satisfied in the following cases: $n = 3$ and $D$ arbitrary; $n \geq 4$ and $D = D_{\red}$ has only quasi-homogeneous isolated singularities.
\end{restatable}

Thus our task is clear! \begin{enumerate}[label=\emph{Step} \arabic*.]
    \item (For this step we allow $n \geq 3$.) For homogeneous polynomials of degree $d$ in $n$ variables, whose attached reduced projective divisor has only quasi-homogeneous isolated singularities, find a elementary characterization of exactly when $-n/d$ is not a root of the Bernstein--Sato polynomial.
    \item Restrict to three variables and explicitly write down \emph{all} polynomials described above with $-3/d$ not a root of the Bernstein--Sato polynomial.
    \item For these special polynomials, prove that $-3/d$ is not a pole of the motivic zeta function.
\end{enumerate}

\bigskip

Let $\mathbb{A}_n$ be the Weyl algebra of $R = \mathbb{C}[x_1, \dots, x_n]$. The Bernstein--Sato polynomial $b_f(s)$ of $f$ is the $\CC[s]$-annihilator of the cyclic $\mathbb{A}_n[s]$-module
\begin{equation*}
    M_f = \frac{ \mathbb{A}_n[s]}{\ann_{\mathbb{A}_n[s]} f^s + \mathbb{A}_n[s] \cdot f}.
\end{equation*}
Under certain geometric hypotheses on $\Div(f)_{\red}$, Walther \cite{uliInventiones} (which the first author generalized in \cite{BathAnnihilationPowers} to a multi-$s$ setting) proved that $M_f$ admits a nice presentation as $\text{cokernel} (\oplus \mathbb{A}_n[s] \to \mathbb{A}_n[s])$, where the map is essentially induced by the syzygies of the partial derivatives of $f$. These syzygies can be repackaged as the module of logarithmic derivations, which is graded when $f$ is homogeneous. The non-presence of degree zero logarithmic derivations has been fruitful for finding roots of the Bernstein--Sato polynomial, see \cite{uliInventiones, BathCombinatorially, BathLQHBSpolys} and the subsequent new versions of \cite{SaitoBSProj}.

Granger and Schulze \cite{GrangerSchulze} studied the notion of semi-simple, nilpotent, and traceless logarithmic derivations (albeit for formal power series rings). As this allows for a finer accounting of degree zero derivations than recording non-existence; one can hope they yield a finer accounting of Bernstein--Sato polynomials. We show they do, obtaining a simple characterization of when $-n/d$ is a root of the Bernstein--Sato polynomial, for a class of homogeneous polynomials of interest to us.

To state this, set $b_D(s)$ the Bernstein--Sato polynomial of the divisor $D \subset \mathbb{P}^{n-1}$ attached to our homogeneous $f$, see Definition \ref{def-BSpolyProjHypersurface}. We do this for two reasons: $b_D(s)$ always divides $b_f(s)$; when $D = D_{\red}$ has quasi-homogeneous isolated singularities, a formula for the set of roots $\Zeroes(b_D(s))$ is classical. So in our case, the roots $\Zeroes(b_D(s))$ are `easy', whereas the roots $\Zeroes(b_f(s)) \setminus \Zeroes(b_D(s))$ are not.

\begin{restatable}{introTheorem}{ThmC} \label{thm-quasiHomIsolatedBSRootCharacterization}
    Let $f \in R$ be homogeneous of degree $d$, whose associated projective divisor $D \subset \PP^{n-1}$ is not a cone. Let $D_{\red}$ be the reduced divisor attached to $D$. Suppose that $D_{\red}$ has (at worst) quasi-homogeneous isolated singularities. Consider the following conditions:
    \begin{enumerate}[label=(\alph*)]
        \item $\frac{-n}{d} \notin \Zeroes(b_f(s))$;
        \item $\frac{-n}{d} \notin \Zeroes(b_D(s))$;
        \item $\Der_R(-\log_0 f)$ contains a semi-simple, non-traceless derivation.
    \end{enumerate}
    Then
    \begin{equation*}
        \bigg[\text{(a) is true} \bigg] \iff \bigg[\text{both (b) and (c) are true}\bigg].
    \end{equation*}
\end{restatable}

\bigskip

That handles Step 1. For the enumeration in Step 2, we must restrict from $\mathbb{A}^n$ to $\mathbb{A}^3$. For completely different reasons, du Plessis and Wall \cite{PlessisWall3Dim, PlessisWallOneDimSym, PlessisVersality} studied what they called $1$-symmetric projective plane curves, which is nothing more than demanding the existence of a semi-simple derivation. They enumerated these into $6$ families, but these families are not so small: for degrees $\{3,4,5,6\}$ they have some tables in \cite{PlessisWall3Dim} that grow in length at an alarming rate. We refine their enumeration, proving in Proposition \ref{prop-SymmetricReduced-StandardForm-Better} there is, up to homogeneous linear coordinate change, only one semi-simple standard form of such a polynomial, namely the one given below.


\begin{restatable}{introDef}{DefD} \label{def-standardForm}
    Let $f \in \CC[x,y,z]$ be homogeneous of degree $d$. We say $f$ is in \emph{semi-simple standard form} if $f$ factors into irreducibles as
        \begin{align} \label{eqn-explicitEnumerationEqn}
        f &= x^{a_1}y^{a_2}z^{a_3} \prod_{1 \leq q \leq m} \left( y^t + c_q x^u z^{t-u} \right)^{b_q}.
    \end{align}
    Here $m, t, u \in \mathbb{Z}_{\geq 0}$; $u < t$ and, if $u \neq 0$, then $u$ and $t$ are coprime; $a_1, a_2, a_3, b_1, \dots, b_m \in \mathbb{Z}_{\geq 0}$;  $c_1, \dots, c_m \in \CC^\times$ are pairwise distinct.
Note that $d= a_1 + a_2 + a_3 +  t(b_1 + \cdots + b_m)$.

Moreover, we say such a semi-simple standard form $f$ satisfies the \emph{non-traceless condition} when
    \begin{equation} \label{eqn-nonTracelessCondition-inIntro}
        u(d-3a_3) \neq (t-u)(d-3a_1).
    \end{equation}
\end{restatable}

Condition \eqref{eqn-nonTracelessCondition-inIntro} is enigmatic. By Proposition \ref{prop-nonTracelessCondition}, it characterizes the non normal crossing polynomials from Definition \ref{def-standardForm} admitting a non-traceless semi-simple derivation. Using it, we complete Step 2.

\begin{restatable}{introTheorem}{ThmE} \label{thm-EnumeratingEqnswithBadBSPoly}
    Suppose $f \in \CC[x, y, z]$ is homogeneous of degree $d$, such that its associated projective divisor $D \subset \PP^2$ is neither a cone nor normal crossing. Suppose that the reduced $D_{\red} \subset \PP^2$ has (at worst) quasi-homogeneous isolated singularities.

    Consider the following conditions:
    \begin{enumerate}[label=(\alph*)]
        \item $-\frac{3}{d} \notin \Zeroes(b_f(s))$;
        \item $-\frac{3}{d} \notin \Zeroes(b_D(s))$;
        \item up to a homogeneous, linear coordinate change, $f$ can be written in a semi-simple standard form (Definition \ref{def-standardForm}) satisfying \eqref{eqn-nonTracelessCondition-inIntro}, the non-traceless condition.
    \end{enumerate}
    Then
    \begin{equation*}
        \bigg[ \text{(a) is true} \bigg] \iff \bigg[\text{both (b) and (c) are true}\bigg].
    \end{equation*}
\end{restatable}

\bigskip

Step 3 sees the return of the motivic zeta function. An embedded resolution for the divisor of a homogeneous polynomial $f$ in three variables can be reconstructed  from the embedded resolution of the attached divisor in $\PP^2$. The residue at $-3/d$ for the motivic zeta function of $f$ is then encoded by a decorated normal crossing configuration on a nonsingular projective surface. The second author has proved \cite{VeysStructure} a structure theorem for such configurations which, while we do not use, inspires our computation of this residue.

We can prove the Stronger Monodromy Conjecture for a class of polynomials from Definition \ref{def-standardForm} satisfying the non-traceless condition.

\begin{restatable}{introTheorem}{ThmF} \label{thm-noBadPole}
    Let $f \in \mathbb{C}[x,y,z]$ be homogeneous of degree $d$, which is in a semi-simple standard form (Definition \ref{def-standardForm}), satisfies the non-traceless condition, and does not cut out a hyperplane arrangement. Assume that all $\frac 1{a_i}$ and $\frac 1{b_i}$ are different from $\frac 3d$.
Then $-\frac 3d$ is not a pole of $Z^{\mot}_f(s)$ nor of $Z_{f,0}^{\mot}(s)$. Moreover, $f$ satisfies the Stronger Monodromy Conjecture, as well as the local at $0$ variant.
\end{restatable}

Finally, Theorem \ref{thm-SMCinourCase} follows by the conjunction of Proposition \ref{prop-SMCCriterion}, Theorem \ref{thm-EnumeratingEqnswithBadBSPoly}, and Theorem \ref{thm-noBadPole}.

\bigskip

Here is a roadmap. In Section $2$ we define and give background about Bernstein--Sato polynomials and the motivic zeta function. As a warm-up, we prove Proposition \ref{prop-SMCCriterion}. In Section $3$, we focus on Bernstein--Sato polynomials and prove Theorem \ref{thm-quasiHomIsolatedBSRootCharacterization}. In Section $4$, we turn to the enumeration of semi-simple plane curves, Proposition \ref{prop-SymmetricReduced-StandardForm-Better}, from which Theorem \ref{thm-EnumeratingEqnswithBadBSPoly} follows. In Section $5$, we study the motivic zeta functions of the special polynomials appearing in Theorem \ref{thm-noBadPole}. This allows us to prove Theorem \ref{thm-noBadPole} and then quickly deduce Theorem \ref{thm-SMCinourCase}.


\bigskip

\emph{Acknowledgements}: We are indebted to Alexandru Dimca for kindly directing us to the work of du Plessis and Wall. And we thank Guillem Blanco for helpful conversations in the prenatal stage of this project.

\section{Bernstein--Sato polynomials and motivic zeta functions}

Let $R = \CC[x_1, \dots, x_n]$ and $f \in R \setminus \CC$. In this section we define the Bernstein--Sato polynomial of $f$ and the motivic zeta function of $f$, and we state the Strong Monodromy Conjecture. We finish with a sufficient condition for $f$ to satisfy the Strong Monodromy Conjecture, \emph{provided} that $f$ is homogeneous and the attached reduced projective divisor $D_{\red} \subset \PP^{n-1}$ has (at worst) isolated singularities.

\subsection{Bernstein--Sato Polynomials}

Let $\mathbb{A}_n$ be the Weyl algebra of $R$ and $\mathbb{A}_n[s] = \mathbb{A}_n \otimes_{\CC} \CC[s]$. For $f \in R$ introduce a formal symbol $f^s$ and denote $R[s,1/f] \otimes_{\mathbb{C}} \CC \cdot f^s$ by $R[s,1/f] f^s$. The natural left $\mathbb{A}_n[s]$-action on $R[s,1/f]$ graduates to an action on $R[s, 1/f] f^s$ by letting $R$-derivations act on $f^s$ by formal application of the chain rule. The $\mathbb{A}_n[s]$-submodule generated by $1 \otimes f^s$ (resp. $f \otimes f^s)$ is called $\mathbb{A}_n[s] \cdot f^s$ (resp. $\mathbb{A}_n[s] \cdot f^{s+1})$.

\begin{define} \label{def-BSpoly}
    Let $f \in R$. The Bernstein--Sato polynomial $b_f(s) \in \CC[s]$ is the minimal monic polynomial satisfying the \emph{functional equation}
    \begin{equation} \label{eqn-functionalEqn}
        b_f(s) f^s \in \mathbb{A}_n[s] \cdot f^{s+1}.
    \end{equation}
    The roots of $b_f(s)$ are denoted by $\Zeroes(b_f(s))$.
\end{define}

There are also local algebraic and local analytic Bernstein--Sato polynomials: repeat the construction, now using $g \in \mathscr{O}_{X,\mathfrak{x}}$ and stalks of sheaves of differential operators. The local \emph{Bernstein--Sato polynomial} of $g$ is the $\mathbb{C}[s]$-polynomial $b_{g,\mathfrak{x}}(s)$ satisfying the stalk version of the functional equation \eqref{eqn-functionalEqn}.

The Bernstein--Sato polynomial's existence is by no means obvious: in the global algebraic case it is due to Bernstein \cite{Bernstein}; in the local analytic to Bjork \cite{BjorkDimensions}. Originally it was invented to answer a question of Gelfand about the existence, and then the poles, of a meromorphic continuation of a certain distribution. It has proven to be ubiquitious throughout singularity theory: its roots (and multiplicities) appear in birational geometry through jumping numbers \cite{EinJumping} and rational singularities \cite{SaitoOnBFunction}, as well as in the theory of mixed Hodge modules and the Hodge filtration (for example \cite{MustataPopaQDivisors}, or \cite{BathDakin}, which has formulas germane to the situation of this manuscript). There is also an important connection to monodromy: if $F_{f,p}$ is the Milnor fiber of $f$ near $p \in f^{-1}(0)$, then winding around $\mathbb{C}^\times$ induces a geometric monodromy action on $F_{f,p}$, yielding an algebraic monodromy action on the cohomology of $F_{f,p}$. Malgrange \cite{Malgrange1983} and Kashiwara \cite{Kashiwara1983} proved that exponentiating the roots of the local Bernstein--Sato polynomial $b_{f,p}(s)$ recovers the eigenvalues of the algebraic monodromy.

Algebraically, the Bernstein--Sato polynomial is (the monic generator of) the $\mathbb{C}[s]$-annihilator of a certain $\mathbb{A}_n[s]$-module:
\begin{equation*}
    (b_f(s)) =\ann_{\mathbb{C}[s]} \frac{ \mathbb{A}_n[s] \cdot f^s}{ \mathbb{A}_n[s] \cdot f^{s+1}} = \ann_{\mathbb{C}[s]} M_f,
\end{equation*}
where
\begin{equation*}
    M_f := \frac{\mathbb{A}_n[s]}{\ann_{\mathbb{A}_n[s]} f^s + \mathbb{A}_n[s] \cdot f}.
\end{equation*}
For more context about Bernstein--Sato polynomials, especially from an algebraic perspective, we recommend the survey \cite{WaltherSurveyfs}.

\begin{remark} \label{rmk-BSfacts} \,
    \begin{enumerate}[label=(\alph*)]
        \item If $f = x_1^{a_1} \cdots x_r^{a_r} \in R$, then $b_f(s) = \prod_{1 \leq t \leq r} \prod_{1 \leq \ell \leq a_t}(s + \frac{\ell}{a_t})$.
        \item The Bernstein--Sato polynomial does not depend on choice of defining equation of the divisor. Moreover, if $f \in R$ and $g$ is an analytic defining equation of the germ of $f$ at $p$, then $b_{f, p}(s) = b_{g,p}(s)$.
        \item An algebraic (or analytic) function is smooth at $p$ if and only if $b_{f, p}(s) = s+1.$
        \item In both the algebraic and analytic contexts, as $b_{g,p}(s)$ is the $\mathbb{C}[s]$-annihilator of a certain module, it is not hard to prove that $b_{g,p}(s)$ equals the least common multiple of the local Bernstein--Sato polynomials of $g$ at $q \in U$, where $U$ is a sufficiently small open.
        \item In the algebraic context of $f \in R$, one can prove that $b_f(s) = \text{lcm}_{p \in \{f=0\}} b_{f,p}(s)$.
        \item From the above remarks, one can deduce that $-1$ is always a root of any sort of Bernstein--Sato polynomial.
        \item If $f \in R$ is homogeneous, then because of the equivariant $\CC^\times$-action, the above remarks also imply that $b_f(s) = b_{f,0}(s)$.
    \end{enumerate}
\end{remark}

In this paper we are mostly interested in homogeneous $f$, where we know that $b_f(s) = b_{f,0}(s) = \text{lcm}_{p \in \{f=0\}} b_{f,p}(s)$. As it is quite hard to determine which roots of $b_{f}(s)$ are `uniquely' supported at the origin, we make the following definition.

\begin{define} \label{def-BSpolyProjHypersurface}
    Let $f \in R$ be homogeneous and $D \subset \mathbb{P}^{n-1}$ the attached divisor. The \emph{Bernstein--Sato polynomial of $D$} is
    \begin{equation*}
        b_D(s) = \text{lcm}_{q \in D} b_{g_q,q}(s),
    \end{equation*}
    where $g_q$ is a local defining equation of $D$ at $q$.
\end{define}

\begin{remark} Let $f \in R$ be homogeneous with projective divisor $D \subset \PP^{n-1}$.
    \begin{enumerate}[label=(\alph*)]
        \item It follows from Remark \ref{rmk-BSfacts} that $b_D(s) = \text{lcm}_{p \in (\{f = 0\} \setminus 0)} b_{f,p}(s)$. Also $b_D(s)$ divides $b_f(s) = b_{f,0}(s)$.
        \item One usually assumes $D$ is not a cone without harm. Recall that if $D$ is a cone, then after a judicious change of coordinates $D$ corresponds to the divisor of a homogeneous $g \in \CC[x_1, \dots, x_{n-1}] \subset R$. In particular, $b_f(s) = b_D(s) = b_g(s)$. Eventually we will be concerned with $D_{\red}$ having isolated singularities, in which case $D$ being a cone implies $g_{\red} \in \mathbb{C}[x_1, \dots, x_{n-1}]$ is homogeneous with isolated singularity at $0$. This is one of the few instances where a formula for $b_{g_{\red}}(s)$ is known.
    \end{enumerate}
\end{remark}

\subsection{The motivic zeta function}


The motivic zeta function is some \lq geometric upgrading\rq\ of the $p$-adic Igusa zeta function. For motivation and history, we refer to \cite{NicaiseSurvey, VeysMonConjSurvey} and the references therein.
In order to define it, we first briefly introduce the Grothendieck ring of varieties.

\begin{define}
	The \emph{Grothendieck ring of complex varieties} $\GR$ is the quotient of the free abelian group generated by the symbols $[X]$, where $X$ runs over all complex varieties, by the relations
	$	[X] = [Y] $, if  $X \cong Y$, and
	$	[X] = [Y] + [X \setminus Y]$,  if  $ Y$ is Zariski-closed in  $X$.

	We always regard $\GR$ as a commutative ring where the multiplicative action is induced  by $[X] \cdot [Y] = [X \times Y]$. We set $\LL := [\mathbb{A}^1]$ and $\M:=\GR[\LL^{-1}]$, the ring obtained from $\GR$ by inverting $\LL$.
\end{define}

	Let $f \in \CC[x_1,\hdots,x_n] \setminus \CC$, such that   $f(0)=0$.

\begin{define}  Denote by $C_{m}(f)$ the {\em $m$-contact locus of $f$}, being the subvariety of the space of $m$-jets on $\CC^n$, that have contact order precisely $m$ with $\Div(f)$, and by
$C_{m,0}(f)$ those $m$-jets that are moreover  attached at the origin.
 The {\em motivic zeta function of $f$} is
\begin{equation}\label{zetaformula}
Z_f^{\mot}(s) := \sum_{m\geq 0} [C_{m}(f)] \LL^{-mn} (\LL^{-s})^m
\in \M[[\LL^{-s}]],
\end{equation}
 where $\LL^{-s}$ is a formal variable.
The {\em local motivic zeta function of $f$}, denoted by $Z_{f,0}^{\mot}(s)$, is given by a similar formula, replacing $[C_{m}(f)]$ by $[C_{m,0}(f)]$.
\end{define}
The notation $\LL^{-s}$ arose from the analogy with the $p$-adic setting; the $p$-adic Igusa zeta function is given by a similar series in $p^{-s}$.
We refer to e.g. \cite{DenefLoeser1998, VeysIgusaSurvey} for details. We will not really use the definition in the sequel.

Imagine for a moment a more innocent formal power series $q(s) \in \QQ[[s]]$. To say $q(s) \in \QQ(s) = \text{Frac}(\QQ[s])$ is equivalent to saying there are a finite number of poles $p_1, \dots, p_m \in \QQ$ such that $q(s) \in \QQ[s, \frac{1}{s - p_1}, \dots, \frac{1}{s-p_m}]$. Each pole $p_k$ has an order $\ell$; its the smallest $\ell$ with $(s-p_k)^\ell q(s) \in \QQ[s, \frac{1}{s - p_1}, \dots, \frac{1}{s-p_{k-1}}, \frac{1}{s-p_{k+1}}, \dots, \frac{1}{s-p_r}]$. Similar statements apply to formal power series in $A[[s]]$, provided $A$ is a integral domain and we use the field of fractions $\text{Frac}(A[s])$. The situation for $Z_f^{\mot}(s) \in \M[[\LL^{-s}]]$ is quite different: not only does $\GR$ fail to be an integral domain \cite{poonen}, but even $\LL \in \GR$ is a zero divisor \cite{borisov}. Nevertheless, Denef and Loeser proved \cite{DenefLoeser1998} that there is a finite set $S \subset \mathbb{Z}_{> 0} \times \mathbb{Z}_{> 0}$ such that
\begin{equation} \label{eqn:motivicPoles}
    Z_f^{\mot}(s) \in \M \left[ \left \{ \frac{1}{\LL^{a + bs} - 1} \right\}_{(a,b) \in S} \right] \subset \M[[\LL^{-s}]].
\end{equation}
So the motivic zeta function is `rational' in some sense, leading to the notion of a pole and the order of a pole of $Z_f^{\mot}(s)$ and $ Z_{f,0}^{\mot}(s)$.

\begin{remark}
    There are clearly subtleties concerning the notion of pole and pole order for $Z_f^{\mot}(s)$ and $ Z_{f,0}^{\mot}(s)$. We refer to \cite[Section 4]{RodriguesVeys} for an exact definition of $\LL^{-s_0}$ being a pole of the motivic zeta function, where $s_0$ is a rational number, and of its pole order. In the sequel we will shorten terminology by simply saying that \lq $s_0$ is a pole of $Z_f^{\mot}(s)$ or $ Z_{f,0}^{\mot}(s)$\rq.
\end{remark}

\smallskip
There is a formula for these zeta functions in terms of an embedded resolution  $h:Y \rightarrow \CC^n$  of $\Div(f)\subset \CC^n$. We mean by this that $h$ is a proper birational morphism,  $Y$ is smooth, $h$ induces an isomorphism over $\CC^n \setminus f^{-1}\{0\}$, and $h^{-1}(f^{-1}\{0\})=\cup_{j \in T} E_j$ is a simple normal crossing divisor, i.e., the irreducible components $E_j$ are non-singular hypersurfaces that intersect transversally.

 The \emph{numerical data} of each component $E_j$ are $(N_j,\nu_j)$, where $N_j$ and $\nu_j-1$ are the multiplicities of $E_j$ in the divisor defined by $f \circ h$ and $h^*(dx \wedge \hdots \wedge dx_n)$, respectively.
That is,
	$$\Div(f \circ h) = \sum_{j \in T} N_jE_j\quad \text{and}\quad K_Y=\Div(h^*(dx_1 \wedge \hdots \wedge dx_n)) = \sum_{j \in T} (\nu_j-1)E_j,$$
where $K_{\bullet}$ denotes the canonical divisor.
	We also set  $E^\circ_I = \cap_{i \in I} E_i \setminus \cup_{k \notin I} E_k$ for every subset $I$ of $T$.
	Note that $Y$ is the disjoint union of all $E_I^\circ$, $I \subset T$, and that $h$ induces an isomorphism between $E_{\emptyset}^{\circ}$ and $\mathbb{C}^n \setminus f^{-1}\{0\}$.

\begin{theorem} \label{thm:motivicResolutionFml} (\cite{DenefLoeser1998}) Using notation above, we have that
$$	Z_f^{\mot}(s) = \LL^{-n} \sum_{I \subset T} [E_I^\circ ]\prod_{i \in I} \frac{\LL-1}{\LL^{\nu_i+N_i s}-1}
$$
and
$$	Z_{f,0}^{\mot}(s) =  \LL^{-n} \sum_{I \subset T} [E_I^\circ \cap h^{-1}(0)]\prod_{i \in I} \frac{\LL-1}{\LL^{\nu_i+N_i s}-1}.
$$
\end{theorem}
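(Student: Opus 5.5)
The plan is to rewrite both zeta functions as motivic integrals over arc spaces, pull them back to $Y$ along $h$ with the motivic change of variables formula, and then compute the pulled-back integral stratum by stratum using the normal crossing structure. Throughout, $\mu$ denotes the motivic measure on the arc space $\mathcal{L}(X)$ of a smooth $n$-dimensional variety $X$. It is normalised so that a cylinder $A$ stable at level $m$ has $\mu(A) = [\pi_m(A)]\LL^{-(m+1)n}$, where $\pi_m$ is truncation to $m$-jets.

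\emph{Step 1: integral form.} The set of arcs $\gamma$ with $\ord_t f(\gamma) = m$ is a cylinder whose image in the $m$-jets is $C_m(f)$. Hence $[C_m(f)]\LL^{-mn} = \LL^{n}\,\mu\{\ord_t f = m\}$, and therefore
$$Z_f^{\mot}(s) = \LL^{n}\int_{\mathcal{L}(\CC^n)} \LL^{-s\,\ord_t f}\, d\mu .$$
The arcs on $f^{-1}(0)$ have measure zero and can be ignored. The local version is the same integral restricted to arcs with $\gamma(0)=0$. The exact power of $\LL$ in front depends on the normalisation convention, and I will fix it by checking the case of smooth $f$.

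\emph{Step 2: change of variables.} Since $h$ is proper and birational, the Denef--Loeser / Kontsevich change of variables formula gives
$$\int_{\mathcal{L}(\CC^n)} \LL^{-s\,\ord_t f}\,d\mu = \int_{\mathcal{L}(Y)} \LL^{-s\,\ord_t (f\circ h) - \ord_t \mathrm{Jac}_h}\,d\mu_Y .$$
Here $\ord_t\mathrm{Jac}_h$ is the order of $h^*(dx_1\wedge\dots\wedge dx_n)$ along the arc. I take this formula as a black box; proving it is the genuinely hard input, but it is standard. For the local zeta function, the condition $\gamma(0)=0$ becomes the condition that the base point of the lifted arc lies in $h^{-1}(0)$.

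\emph{Step 3: stratify and compute.} Partition $\mathcal{L}(Y)$ by the stratum $E_I^\circ$ containing $\gamma(0)$ and by the orders $k_i = \ord_t$ of $\gamma$ along $E_i$, for $i\in I$. Each $k_i$ is at least $1$, and arcs along which some $k_i$ is infinite have measure zero. Cover $E_I^\circ$ by Zariski opens $U$ carrying local coordinates $y_1,\dots,y_n$ with $E_i=\{y_i=0\}$ for $i\in I$. On such a $U$,
$$f\circ h = \text{unit}\cdot\prod_{i\in I} y_i^{N_i}, \qquad \mathrm{Jac}_h=\text{unit}\cdot\prod_{i\in I} y_i^{\nu_i-1}.$$
So on the piece of $\mathcal{L}(Y)$ labelled by $(I,(k_i))$ the integrand is the constant $\LL^{-\sum_i k_i(N_i s+\nu_i-1)}$.

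A direct jet count shows that this piece has measure $[E_I^\circ\cap U](\LL-1)^{|I|}\LL^{-n}\LL^{-\sum_i k_i}$. The factor $(\LL-1)$ for each $i\in I$ is the choice of the nonzero leading coefficient of $y_i$. These classes glue over the cover by additivity, because the relevant fibrations are Zariski locally trivial with fibre a product of copies of $\CC^\times$ and affine spaces. Summing over each $k_i\geq 1$ is a geometric series:
$$\sum_{k\geq 1}(\LL-1)\LL^{-k(\nu_i+N_i s)} = \frac{\LL-1}{\LL^{\nu_i+N_i s}-1}.$$
Taking the product over $i\in I$ and summing over $I\subset T$, including $I=\emptyset$, gives the stated formula. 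The surviving power $\LL^{-n}$ is the one fixed in Step 1. Restricting $\gamma(0)$ to $h^{-1}(0)$ replaces $[E_I^\circ]$ by $[E_I^\circ\cap h^{-1}(0)]$ and gives the local formula.

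\emph{Main obstacle.} The crux is the change of variables formula in Step 2. After that, the most delicate point is the piecewise triviality used in Step 3: the coordinates $y_i$ exist only locally, so one must check that the classes computed on each $U$ glue to $[E_I^\circ]$ times the fibre factor in $\M$. I would handle this by cutting $E_I^\circ$ into finitely many locally closed pieces over which the coordinates exist, and then using the scissor relations.
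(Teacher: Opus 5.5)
Your argument is the standard Denef--Loeser proof: write the zeta function as an arc-space integral, apply the Kontsevich/Denef--Loeser change of variables along $h$, and count jets on each normal crossing stratum. That is exactly what the paper relies on, since it gives no proof of its own and only cites \cite{DenefLoeser1998}. Your Step 3 is right. The piece labelled $(I,(k_i))$ has measure $[E_I^\circ](\LL-1)^{|I|}\LL^{-n-\sum_i k_i}$, and the geometric series give $\int\LL^{-s\ord_t f}\,d\mu=\LL^{-n}\sum_{I\subset T}[E_I^\circ]\prod_{i\in I}\frac{\LL-1}{\LL^{\nu_i+N_is}-1}$. The local version works the same way.

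The one thing to fix is the final power of $\LL$. Your Step 1 gives $Z_f^{\mot}(s)=\LL^{n}\int\LL^{-s\ord_t f}\,d\mu$, so Steps 1 and 3 together yield $\sum_{I\subset T}[E_I^\circ]\prod_{i\in I}\frac{\LL-1}{\LL^{\nu_i+N_is}-1}$ with no factor $\LL^{-n}$. Your claim that ``the surviving power $\LL^{-n}$'' comes out is therefore a bookkeeping slip.

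The smooth check you promised would have caught this. Take $f=x_1$ and read the defining series \eqref{zetaformula} literally, with $m$-jets taken modulo $t^{m+1}$. The series equals $\LL^{n-1}(\LL-1)\frac{\LL^{1+s}}{\LL^{1+s}-1}$, while the right-hand side of the theorem equals $\LL^{-1}(\LL-1)\frac{\LL^{1+s}}{\LL^{1+s}-1}$.

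The displayed formula, with its $\LL^{-n}$ and its $I=\emptyset$ term, matches the normalization
\[
Z_f^{\mot}(s)=\sum_{m\geq 0}[C_m(f)]\LL^{-(m+1)n}\LL^{-ms}=\int\LL^{-s\ord_t f}\,d\mu .
\]
You should say explicitly that you use this normalization, or equivalently that the weight $\LL^{-mn}$ in \eqref{zetaformula} differs by the unit $\LL^{n}$. This changes nothing about poles. With that adjustment your proof is complete.
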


There are other zeta functions attached to $f$. One is the topological zeta function, first introduced in \cite{DenefLoeser1992}.

\begin{define}
	The \emph{(local) topological zeta function} of $f$ is
\begin{equation}
	Z_{f,0}^{\text{top}}(s) := \sum_{I \subset T} \chi(E_I^\circ \cap h^{-1}(0)) \prod_{i \in I}\frac{1}{\nu_i+N_is}  \in \QQ(s),
	\end{equation}
where $\chi(.)$ denotes the topological Euler characteristic.
\end{define}


There is also the $p$-adic zeta function, though we will not pursue this direction. In any case, both the topological and $p$-adic zeta functions (for all but finitely many $p$) are specializations of the motivic one \cite[Section~2]{DenefLoeser1998}, and so the poles (resp. real parts of the poles) of the topological (resp. $p$-adic) zeta functions are contained in the poles of the motivic zeta function. Note that the inclusion may be strict.

For any of these zeta functions, it is very difficult to understand these poles. Nevertheless, when $n=2$, the second author determined in \cite[Theorem 4.3]{VeysDetermination} all the poles of the topological (and implicitly also motivic) zeta functions.
\begin{theorem}\label{all poles}
	Let $h:Y \rightarrow \CC^2$ be the minimal embedded resolution of the germ of $f^{-1}\{0\}$ at $0$. We have that $s_0$ is a pole of $Z_{f,0}^{\topol}(s)$ or $Z_{f,0}^{\mot}(s)$ if and only if $s_0 = -\frac{\nu_j}{N_j}$ for some component $E_j$ of the strict transform or for some exceptional component $E_j$ intersecting at least three times other components.
\end{theorem}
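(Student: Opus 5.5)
This is the cited result \cite[Theorem 4.3]{VeysDetermination}; here is how I would reprove it. Work on the minimal embedded resolution $h$ of the germ and use the formula of Theorem \ref{thm:motivicResolutionFml} and its topological analogue. Every candidate pole has the form $s_0=-\nu_j/N_j$. Since all poles here have order at most two, and order two only occurs when two intersecting components share the same ratio, the issue is whether the total residue at $s_0$ vanishes. That residue is a sum over all components $E_j$ with $-\nu_j/N_j=s_0$.

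\textbf{Local contribution of one component.} For an exceptional $E_j\cong\PP^1$ meeting $k$ other components $E_1,\dots,E_k$, put $\alpha_i=\nu_i+s_0N_i$ (defined relative to the fixed $s_0$). Its contribution to the topological residue is
\[
\frac{1}{N_j}\Big((2-k)+\sum_{i=1}^k \frac{1}{\alpha_i}\Big).
\]
The key numerical input comes from the adjunction formula and from $h^*\Div(f)\cdot E_j=0$. If $E_j^2=-\kappa_j$, these give
\[
\sum_{i} N_i=\kappa_jN_j \quad\text{and}\quad \sum_i\nu_i=\kappa_j\nu_j-2 .
\]
Combining them yields the identity $\sum_{i}\alpha_i=-2$ over the neighbours of $E_j$. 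More generally, for any $s$, the values $a_i=\nu_i+sN_i$ satisfy $\sum_{\text{nbrs}}a_i=\kappa_ja_j-2$. The motivic version has the same shape, with $\chi(E_j^\circ)=2-k$ replaced by $[E_j^\circ]=\LL+1-k$ and $1/\alpha_i$ replaced by $(\LL-1)/(\LL^{\alpha_i}-1)$.

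\textbf{Only if.} I would decompose the dual graph into rupture vertices (exceptional components with $k\ge 3$), strict transform vertices, and maximal chains of exceptional curves with $k\le2$, including end chains. Along such a chain the recursion $a_{i-1}+a_{i+1}=\kappa_ia_i-2$ holds. Using it, I would show by telescoping that the ratios $\nu/N$ are strictly monotone along an end chain, and monotone along a chain joining two rupture or strict-transform vertices, unless the ratio is constant on the chain. I would then show that the residue contributions of chain components either vanish identically or cancel with their neighbours' terms, so the residue reduces to terms carried by rupture or strict-transform components having ratio $s_0$. The motivic cancellation would follow the same bookkeeping, with $\LL$-expressions in place of the numbers above.

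\textbf{If.} For a strict transform component, $E_j^\circ$ is non-compact with Euler characteristic $\le 0$ in a controlled way. The residue is then an explicit expression in the $\alpha_i$ of its unique exceptional neighbour, and it is visibly nonzero; alternatively it gives the log-canonical-type pole directly. For a rupture component with $k\ge3$, I would prove that $(2-k)+\sum 1/\alpha_i\neq0$ subject to $\sum\alpha_i=-2$. The needed input, following Loeser, is that at most one neighbour has $\alpha_i<0$ while the others satisfy $0<\alpha_i<1$. This is proved by propagating the recursion outward into each branch, using that every branch eventually reaches an end or a strict transform. With this sign information, $\sum 1/\alpha_i$ is large enough to beat $k-2$.

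I then need that several rupture or strict-transform components sharing the same $s_0$ cannot cancel each other. I would handle this by showing that the residues of the relevant components all have the same sign, via the same sign analysis. A topological pole then gives a motivic pole by specialization.

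\textbf{Main obstacle.} I expect the main obstacle to be the sign analysis: establishing the inequalities on the $\alpha_i$ at rupture vertices, and making the same-sign argument rigorous when distinct components share a candidate pole.
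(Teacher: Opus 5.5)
The paper gives no proof of this statement; it only quotes \cite[Theorem 4.3]{VeysDetermination}. Judged on its own, your sketch has a genuine gap, and it starts with the basic identity. Adjunction gives $\sum_i(\nu_i-1)=\kappa_j\nu_j-2$ over the $k$ neighbours of $E_j$. So $\sum_i\nu_i=\kappa_j\nu_j+k-2$; for $k=2$ this is the classical $\kappa_j\nu_j=\nu_{j_1}+\nu_{j_2}$, as in Lemma \ref{lem:alphaFacts}(b)(2). Hence $\sum_i\alpha_i=k-2$, not $-2$, and along chains the recursion is $a_{i-1}+a_{i+1}=\kappa_ia_i$, with no $-2$.

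This correction changes everything downstream. With it, the ``only if'' direction is immediate and needs no monotonicity. Suppose $E_j$ has $k\le 2$ and no neighbour shares its ratio. Then its contribution to the residue vanishes identically: topologically $(2-k)+\sum_i 1/\alpha_i=0$. Motivically, $(\LL-1)\bigl(1+\frac{1}{\LL^{\alpha}-1}+\frac{1}{\LL^{-\alpha}-1}\bigr)=0$ for $k=2$, and $\LL+\frac{\LL-1}{\LL^{-1}-1}=0$ for $k=1$. If a neighbour does share the ratio, the recursion forces the ratio to be constant along the chain. This can only stop at rupture or strict-transform components, because an end curve satisfies $a_{\mathrm{nbr}}=\kappa_e a_e-1$. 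With your $-2$, none of these chain contributions would cancel.

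The ``if'' direction breaks at the positivity step. Using $\sum_i\alpha_i=k-2$, the bracket at a rupture vertex is $(2-k)+\sum_i 1/\alpha_i=\sum_i(1/\alpha_i-\alpha_i)$.
\begin{itemize}
\item If all $\alpha_i\in(0,1)$, the bracket is positive.
\item If one neighbour has $\alpha_i<0$ (given the others are $<1$, this forces $\alpha_i\in(-1,0)$), the bracket is strictly negative. This follows from convexity of $g(\beta)=\frac{1}{1-\beta}-(1-\beta)$ on $[0,1)$, applied to $\beta_i=1-\alpha_i$ with $\sum_i\beta_i=2$. So $\sum_i 1/\alpha_i$ is not ``large enough to beat $k-2$''.
\end{itemize}
This case really occurs. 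For $f=(y^2-x^3)^2-x^5y$, the second rupture divisor has $(N,\nu)=(26,11)$, with neighbours $(12,5)$, $(13,6)$, $(1,1)$. Its $\alpha_i$ are $-\frac{1}{13},\frac12,\frac{15}{26}$ and its bracket is $-\frac{154}{15}$. The first rupture divisor, by contrast, gives $+10$.

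So nonvanishing at a single vertex can be repaired, but rupture contributions genuinely come with both signs. Your closing step, that all components sharing $s_0$ have residues of one sign ``via the same sign analysis'', is therefore unsupported. The local analysis cannot tell you which sign occurs at the various components with ratio $-s_0$. You need a separate, global argument about how equal ratios are distributed on the dual tree. The same issue arises for strict-transform components, whose ratio $1/N_S$ may be shared with other components.

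Finally, when two components with ratio $-s_0$ meet, a residue computation is the wrong tool. Use the coefficient of $(s-s_0)^{-2}$ instead; topologically it equals $\sum 1/(N_iN_j)>0$ over the relevant intersection points.
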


\subsection{The Strong Monodromy Conjecture}

The roots of the (local) Bernstein--Sato polynomial and the poles of the (local) motivic zeta function are (germ) singularity invariants. The \emph{Strong Monodromy Conjecture} suggests a tight relationship between the two sets.

\begin{conjecture}[(Global/Local) Strong Monodromy Conjecture]
	Let $f \in \CC[x_1,\hdots,x_n] \setminus \mathbb{C}$ and assume $f(0) = 0$. Then:
\begin{enumerate}[label=(\alph*)]
    \item $\Poles(Z_f^{\mot}(s)) \subset \Zeroes(b_f(s))$ (global version);
    \item $\Poles(Z_{f,0}^{\mot}(s)) \subset \Zeroes(b_{f,0}(s))$ (local version at $0$).
\end{enumerate}
\end{conjecture}

There are several variants of this conjecture, all of which have global and local versions. One can weaken the containment: the \emph{Monodromy Conjecture} posits the same inclusions hold after exponentiating both the root set and the pole set. The weaker conjecture motivates the name, as we have seen that exponentiating $\Zeroes(b_{f,0}(s))$ recovers the eigenvalues of the algebraic monodromy of the Milnor fibers near $0$. One can also strengthen the containment: the \emph{Stronger Monodromy Conjecture} says that, if $s_0$ is a pole of order $k$, then $s_0$ is a root of multiplicity at least $k$. One can also replace the motivic zeta function with the topological or $p$-adic one. Since the latter are specializations of the former, the motivic version of whatever conjecture you are interested in will imply the topological or $p$-adic one.

As discussed in the introduction, when $n = 2$ the Strong Monodromy Conjecture is true \cite{Loeser88, Blanco2024}, and otherwise it is wide open; even if we restrict to the `easier' case of homogeneous polynomials in $\mathbb{A}^3$, we only have certainty in general for hyperplane arrangements, isolated singularities, and certain special Newton non-degenerate polynomials. One advantage of considering homogeneous polynomials is they have a natural embedded resolution. As this will be used more than once, we give some details.

In what follows, let $f \in \CC[x_1, \dots, x_n] \setminus \CC$ be homogeneous with attached divisor $D \subset \PP^{n-1}$. Blow up the origin to create an exceptional divisor $F \simeq \PP^{n-1}$. Denote by $D_0$ the restriction of the strict transform (with multiplicities) of $\Div(f)$ to $F$ and note that $D_0 \simeq D$. Now take an embedded resolution $\pi: F_0 \to F$ of $D_0 \subset F$, and denote by $\{E_j\}_{j \in T}$ the irreducible components of $\pi^{-1}(D_0)$. This induces an embedded resolution $h : Y \to \CC^n$ of $\Div(f) \subset \CC^n$, for which the irreducible components (without multiplicity) of $h^{-1} \Div(f)$ are $F_0$ and $\{F_j\}_{j \in T}$, where $F_j$ is an $\mathbb{A}^1$-bundle over the corresponding $E_j$ for all $j \in T$.  For such $j \in T$, the $\mathbb{A}^1$-bundle structure implies that the numerical data of $F_j$ with respect to $h$ are the same as the numerical data of $E_j$ with respect to $\pi$.

\begin{lemma} \label{lem-GlobalZetaHomogeneousFml}
    Let $f \in \CC[x_1, \dots, x_n] \setminus \CC$ be homogeneous of degree $d$ and use preceding notation/construction. Then
    \begin{equation} \label{eqn-GlobalZetaHomogeneousFml}
        Z_f^{\mot}(s) = \LL^{-n} \frac{ (\LL - 1)\LL^{n + ds}}{\LL^{n + ds} - 1} \sum_{I \subset T} [E_I^\circ] \prod_{j \in I} \frac{ \LL - 1}{\LL^{\nu_j +  N_j s} - 1},
    \end{equation}
where for $j \in T$ the $(N_j, \nu_j)$ are the numerical data of the component $E_j$ with respect to $\pi: F_0 \to F$.

Deleting the factor $\LL^{n+ds}$ from \eqref{eqn-GlobalZetaHomogeneousFml} gives a formula for $Z_{f,0}^{\mot}(s)$.
\end{lemma}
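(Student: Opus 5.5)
We apply the Denef--Loeser formula (Theorem \ref{thm:motivicResolutionFml}) to the embedded resolution $h: Y \to \CC^n$ constructed above. The components of $h^{-1}\Div(f)$ are the strict transform $F_0$ of the exceptional divisor of the origin blow-up, and the $\mathbb{A}^1$-bundles $F_j \to E_j$, $j \in T$. First we compute the numerical data of $F_0$. Blowing up a point in $\CC^n$ gives $\nu = n$. Since $f$ has multiplicity $d$ at $0$, the order is $N = d$. The subsequent blow-ups $\pi$ happen inside $F$, or more precisely along centres lying over $D_0 \subset F$. These do not change the data of the strict transform of $F$, so $(N_{F_0},\nu_{F_0}) = (d,n)$. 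By the remark preceding the lemma, each $F_j$ carries the data $(N_j,\nu_j)$ of $E_j$.

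Next we stratify $Y$. The strata $E^\circ_J$ for $J \subset T \cup \{0\}$ split into those with $0 \notin J$ and those with $0 \in J$. Concretely, $Y$ is the total space of a line bundle over $F_0$: the pullback of $\mathcal{O}(-1)$, suitably modified, is the structure that makes each $F_j$ an $\mathbb{A}^1$-bundle, with zero section $F_0$. For $I \subset T$, the stratum $(F_0 \cup \bigcup F_j)$-type stratum with $J = I \cup \{0\}$ is the zero-section copy $E_I^\circ \subset F_0$. The stratum with $J = I$ is the complement of the zero section in the bundle restricted to $E_I^\circ$, that is, a $\CC^\times$-bundle over $E_I^\circ$. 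For $I = \emptyset$, $E_\emptyset^\circ$ is understood as $F_0 \setminus \bigcup E_j$. The key point is that this $\CC^\times$-bundle is Zariski locally trivial, since it comes from a line bundle. Hence its class equals $(\LL-1)[E_I^\circ]$, where $[E_I^\circ]$ is the class in $F_0$.

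Now we sum. Grouping the strata by $I \subset T$, the formula becomes
\[
Z_f^{\mot}(s) = \LL^{-n}\sum_{I\subset T}[E_I^\circ]\prod_{j\in I}\frac{\LL-1}{\LL^{\nu_j+N_js}-1}\left((\LL-1) + \frac{\LL-1}{\LL^{n+ds}-1}\right).
\]
The bracket simplifies to $(\LL-1)\LL^{n+ds}/(\LL^{n+ds}-1)$, which gives \eqref{eqn-GlobalZetaHomogeneousFml}. For the local version, $h^{-1}(0) = F_0$. Intersecting with it keeps only the strata with $0 \in J$, so the bracket is just $(\LL-1)/(\LL^{n+ds}-1)$. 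This is exactly the formula with the factor $\LL^{n+ds}$ deleted.

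The main obstacle is the careful identification of the strata. This has two parts. First, one must check that $Y$ really is a line bundle over $F_0$ whose restriction over $E_j$ is $F_j$; this is a blow-up of the total space of $\mathcal{O}_{\PP^{n-1}}(-1)$ along centres in the zero section. Second, one must justify that the $\CC^\times$-bundle class factorizes as $(\LL-1)[E_I^\circ]$, which follows from Zariski local triviality. One also has to verify that the subsequent blow-ups do not alter $(d,n)$ for $F_0$, which holds because the later centres have codimension at least $2$ in $Y$.
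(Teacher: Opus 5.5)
Your proof is correct and is the paper's argument: you split the Denef--Loeser sum according to whether $0\in J$, use $(N_0,\nu_0)=(d,n)$ and $[F_I^\circ]=(\LL-1)[E_I^\circ]$, and simplify $(\LL-1)+\frac{\LL-1}{\LL^{n+ds}-1}$, with only the strata $J\ni 0$ surviving in the local version.

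One correction to your closing remark, which the lemma does not actually need because the paper takes the bundle structure from the preceding construction. $Y$ is the total space of $\pi^*\mathcal{O}_F(-1)$, i.e.\ $F_0\times_F \mathrm{Bl}_0\CC^n$. It is obtained by blowing up $\mathrm{Bl}_0\CC^n$ along the preimages, under the bundle projection, of the centres of $\pi$, not along centres inside the zero section. Blowing up inside $F$ would produce exceptional divisors that are projective bundles rather than $\mathbb{A}^1$-bundles. It would also leave $\Div(f)$ non-normal-crossing along the fibres over the bad points of $D_0$, away from $F$.
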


\begin{proof}
    The following three facts are routine.
    \begin{enumerate}[label=(\roman*)]
        \item If $I \subset T$, then $[F_I^\circ] = [F_{I \cup \{0\}}^{\circ}] (\LL - 1) = [E_I^\circ](\LL - 1).$ (This includes the case $I = \emptyset$.)  Indeed, since each $F_j$ is an $\mathbb{A}^1$-bundle over its paired $E_j$, $[F_j] = [E_j \times \mathbb{A}^1] = [E_j] \LL$, and similarly for $I \subset T$. The fact easily follows.
        \item $1 + \frac{1}{\LL^{n + ds} - 1} = \frac{\LL^{n + ds}}{\LL^{n + ds} - 1}$.
        \item The numerical data of $F_0$ are $(N_0, \nu_0) = (d, n)$.
    \end{enumerate}
    Now decompose the formula of Theorem \ref{thm:motivicResolutionFml} for $Z_f^{\mot}(s)$ into two summands: one enumerating index sets $I \subset T \cup \{0\}$ containing $0$, and one enumerating the rest. Using the three facts proves the claim for $Z_f^{\mot}(s)$. The justification of $Z_{f,0}^{\mot}(s)$ is easier, since only index sets $I$ containing $0$ appear in Theorem \ref{thm:motivicResolutionFml}'s expression for $Z_{f,0}^{\mot}(s)$.
\end{proof}

In our case of interest, we need some way to control the chaos of poles.

\PropA*

\begin{proof}
We will use Lemma \ref{lem-GlobalZetaHomogeneousFml} and its notation. We first prove the global claim.

We are entitled to, and so we will, assume the embedded resolution $\pi: F_0 \to F$ of $D_0 \simeq D$ is an isomorphism outside the singular locus of $D_{0,\red} \simeq D_{\red}$. Let $\{p_\ell\}_{\ell \in L}$ be the finite set of singular points of $D_{\red}$ 
with corresponding local defining equations $\{g_{p_{\ell}}\}_{\ell \in L}$.  Write $D = \sum_{m \in M} a_m D_m$; 
its irreducible components $D_m$
are in bijection with strict transforms of $\pi$, and so $M \subset T$ enumerates these transforms. Moreover, if $m \in M$, the numerical data of $E_m$ are  $N_m = a_m$ and $\nu_m = 1$. Therefore
\begin{align*}
    &\sum_{I \subset T} [E_I^\circ] \prod_{i \in I} \frac{ \LL - 1}{\LL^{\nu_i +  N_i s} - 1} \\
    &= E_{\emptyset}^\circ + \sum_{\ell \in L} \sum_{I \subset T} [E_I^\circ \cap \pi^{-1}(p_{\ell})] \prod_{i \in I} \frac{ \LL - 1}{\LL^{\nu_i +  N_i s} - 1}
    + \sum_{\substack{M^\prime \subset M \\ M^\prime \neq \emptyset}} [E_{M^\prime}^{\circ}] \prod_{m \in M^\prime} \frac{ \LL - 1}{\LL^{\nu_m +  N_m s} - 1} \\
    &= E_{\emptyset}^{\circ} + \sum_{\ell \in L} \LL^{n-1} Z_{g_{p_{\ell}}, p_{\ell}}^{\mot}(s)
    +\sum_{\substack{M^\prime \subset M \\ M^\prime \neq \emptyset}} [E_{M^\prime}^{\circ}] \prod_{m \in M^\prime} \frac{ \LL - 1}{\LL^{1 +  a_m s} - 1}.
\end{align*}
Comparing this to \eqref{eqn-GlobalZetaHomogeneousFml} we deduce that
\begin{align} \label{eqn-motZetafmlinpieces}
    Z_f^{\mot}(s) = \LL^{-n} \frac{ (\LL - 1) (\LL^{n + ds})}{\LL^{n + ds} - 1} \bigg( [E_{\emptyset}^{\circ}] &+ \sum_{\ell \in L} \LL^{n-1} Z_{g_{p_{\ell}}, p_{\ell}}^{\mot}(s) \\
    &+\sum_{\substack{M^\prime \subset M \\ M^\prime \neq \emptyset}} [E_{M^\prime}^{\circ}] \prod_{m \in M^\prime} \frac{ \LL - 1}{\LL^{1 +  a_m s} - 1} \bigg) \nonumber
\end{align}
and that
\begin{equation*}
    \Poles(Z_f^{\mot}(s)) \subset \{-\frac{n}{d}\} \cup \bigg( \bigcup_{\ell \in L} \Poles(Z_{g_{p_{\ell}}, p_{\ell}}^{\mot}(s)) \bigg) \cup \bigg( \bigcup_{m \in M} \{ -\frac{1}{a_m} \} \bigg).
\end{equation*}
By Remark \ref{rmk-BSfacts}, the Bernstein--Sato polynomial of $f$ is the least common multiple of all its local Bernstein--Sato polynomials. These local Bernstein--Sato polynomials include $b_{g_{p_{\ell}}, p_{\ell}}(s)$, for all $\ell \in L$. They also include $b_{f_{q_m}, q_m}(s) = (s+ 1/a_m) \cdots (s + (a_m-1)/a_m)(s+1)$, where $q_m$ is a generic point of $D_m$, see Remark \ref{rmk-BSfacts}.
Since we have assumed that $\Poles(Z_{g_{p_{\ell}}, p_{\ell}}^{\mot}(s)) \subset \Zeroes(b_{g_{p_{\ell}}, p_{\ell}}(s))$, we see that
\begin{equation*}
    \Poles(Z_f^{\mot}(s)) \setminus \{- \frac{n}{d}\} \subset \Zeroes(b_f(s)).
\end{equation*}
This completes the proof about the global Strong Monodromy Conjecture.

The corresponding claim for the local Strong Monodromy Conjecture at $0$ is entirely similar. Since deleting the $\LL^{n+ds}$ factor from \eqref{eqn-GlobalZetaHomogeneousFml} gives $Z_{f,0}^{\mot}(s)$, deleting the same factor from \eqref{eqn-motZetafmlinpieces} also gives $Z_{f,0}^{\mot}(s)$. As $b_{f}(s) = b_{f,0}(s)$, the pole versus root analysis follows exactly as before.
\end{proof}

\begin{example}
    It can happen that $-n/d$ is a zero of $b_f(s)$ but not a pole of $Z_f^{\mot}(s)$, even when all the assumptions of Proposition \ref{prop-SMCCriterion} are satisfied. A non-reduced example is $f = xy(x-y)z^2(x-z)^4$, see \cite[Appendix, Ex.~A.1]{BudurSaitoYuzvinsky}.
\end{example}

\section{Concerning a root of the Bernstein--Sato polynomial}

In this section, $R = \mathbb{C}[x_1, \dots, x_n]$ with irrelevant ideal $\mathfrak{m}$. Unless otherwise stated, we endow $R$ with its standard grading, where $\deg(x_i) = 1$ for $1 \leq i \leq n$ and we use $\deg(-)$ when referring to this grading.\footnote{See subsection \ref{subsection-lqh} for a discussion of alternative positive gradings.} The degree $\leq \ell$ elements of $R$ are denote by $R_{\leq \ell}$; the homogeneous degree $\ell$ elements are denoted by $R_{\ell}$. Finally, $f \in R$ is reserved for homogeneous polynomials of degree $d$. We set $f_{\red}$ to be the reduced counterpart of $f$.

Guided by Proposition \ref{prop-SMCCriterion}, we require a criterion for $-n/d$ to be a root and to not be a root of the Bernstein--Sato polynomial $f$. When the associated reduced projective divisor has (at worst) isolated quasi-homogeneous singularities, we characterize the root-hood of $-n/d$ completely in Theorem \ref{thm-quasiHomIsolatedBSRootCharacterization}. The equivalence involves syzygy data of the Jacobian ideal of $f$; equivalently, \emph{logarithmic derivations} along $f$.

\subsection{Logarithmic derivations and a lemma}

The module of $\CC$-linear derivations on $R$ will be denoted $\Der_R$. For any $\delta \in \mathfrak{m} \cdot \Der_R$ we have $\delta \bullet \mathfrak{m}^k \subset \mathfrak{m}^{k}$. 


\begin{define} \label{def-trace,nilpotent,semi-simple}
    Let $\delta \in \mathfrak{m} \cdot \Der_R$. We call $\delta^{(2)}$ the induced $\CC$-map of vector spaces
    \begin{equation*}
        \delta^{(2)} : \frac{\mathfrak{m}}{\mathfrak{m}^{2}} \to \frac{\mathfrak{m}}{\mathfrak{m}^2}.
    \end{equation*}
    We say $\delta$ is \emph{semi-simple} (resp. \emph{nilpotent}) when $\delta^{(2)}$ is semi-simple (resp. nilpotent). The \emph{trace} $\trace(\delta)$ of $\delta$ is the trace of $\delta^{(2)}$.
\end{define}

Fixing coordinates $\{x_i\}$ and dual coordinates $\{\partial_i\}$ gives an isomorphism $\Der_R \simeq \bigoplus_i R \cdot \partial_i$. Promote the grading of $R$ to a grading of $\Der_R$: decree all $\{\partial_i\}$ to have degree $-1$. We let $\Der_{R,\leq \ell}$ (resp. $\Der_{R, \ell})$ denote the degree $\leq \ell$ (resp. homogeneous degree $\ell$) elements of $\Der_R$, and $\deg(\delta)$ the degree of a derivation. (Note: all we are doing is identifying $\Der_R \simeq \bigoplus R(-1)$.)

We may uniquely write $\Der_R \ni \delta = \sum_{-1 \leq k < \infty} \delta_k$, where $\delta_k$ is homogeneous of degree $k$. We are especially interested in the zeroeth homogeneous piece. For notational efficiency, here and throughout let $\underline{[\bm{b}]} = \begin{bmatrix}
    b_1 &\cdots & b_n
\end{bmatrix}$ denote a row matrix and $\overline{[\bm{b}]} = \begin{bmatrix}
   b_1 & \cdots & b_n
\end{bmatrix}^T$ a column matrix. Then
\begin{equation} \label{eqn-derLinearPart}
    \delta_0 = \underline{[\bm{x}]}
    Q
    \overline{[\bm{\partial_x}]}
\end{equation}
for $Q \in \text{Mat}_{n, n}(\CC)$. Moreover,
\begin{align*}
    \delta \in \mathfrak{m} \cdot \Der_R &\text{ is semi-simple (resp. nilpotent)} \\
    &\iff \delta_0 \text{ is semi-simple (resp. nilpotent)} \\
    &\iff Q \text{ is semi-simple (resp. nilpotent)}.
\end{align*}
Also: the trace of $\delta \in \mathfrak{m} \cdot \Der_R$ equals the trace of $Q$.

Since $Q$ has a Jordan--Chevalley decomposition we may define the following.


\begin{define}
    Let $\delta \in \mathfrak{m} \cdot \Der_R$ and write $\delta = \sum_{0 \leq k < \infty} \delta_k$ where $\delta_0 = \underline{[\bm{x}]}
    Q
    \overline{[\bm{\partial_x}]}$. Let $Q = Q_S + Q_N$ be the Jordan--Chevalley decomposition of $Q$ into its semi-simple and nilpotent parts. Define derivations $\delta_S = \underline{[\bm{x}]}
    Q_S
    \overline{[\bm{\partial_x}]}$ and $\delta_N = \delta - \delta_S$. Then the \emph{weak Jordan--Chevalley decomposition} of $\delta$ is $\delta = \delta_S + \delta_N$. Note: $\delta_S$ is semi-simple, homogeneous of degree $0$; $\delta_N$ is nilpotent; $[\delta_S, \delta_{N, 0}] = 0$.
\end{define}

\begin{remark} \,
    \begin{enumerate}[label=(\alph*)]
        \item We call this decomposition \emph{weak} since it is really the true Jordan--Chevalley decomposition of $\delta^{(2)}$, repackaged in terms of $\delta$. For instance, there is no reason to expect $[\delta_S, \delta_N] = 0$. However, our weak Jordan--Chevalley decomposition is unique, since the decomposition of $\delta^{(2)}$ is unique.
        \item Throughout the manuscript, any time we select $\delta \in \mathfrak{m} \cdot \Der_R$ we usually are implicitly assuming $\delta \neq 0$.
    \end{enumerate}
\end{remark}

We also will need the following special case of semi-simple derivations.

\begin{define} \label{def-diagonal}
    Fix coordinates $(x; \partial_x)$. A derivation $\delta$ is \emph{diagonal}, with respect to these coordinates, when $\delta =  \delta_0 = \underline{[\bm{x}]}
    Q
    \overline{[\bm{\partial_x}]}$, with $Q$ a diagonal matrix.
\end{define}

\begin{remark} \label{rmk-coordChangeDer} \enspace
    \begin{enumerate}[label=(\alph*)]
        \item A derivation being semi-simple or nilpotent is independent of choice of coordinates. The trace of a derivation, being the sum of its eigenvalues, is also independent of choice of coordinates. A derivation being diagonal is dependent on choice of coordinates.
        \item Let $y_1, \dots, y_n$ be homogeneous minimal generators of the irrelevant ideal $\mathfrak{m}$. There exists $A = (a_{ij})_{1 \leq i,j \leq n} \in \text{GL}_{n}(\CC)$ such that $\overline{[\bm{y}]} = A \overline{[\bm{x}]}$. Note that $A$ is the Jacobian matrix $\begin{bmatrix} \partial y_i / \partial x_j \end{bmatrix}$. So $A^T \overline{[\bm{\partial_y}]} = \overline{[\bm{\partial_x}]}$.

        Let $\delta \in \mathfrak{m} \cdot \Der_R$. Let $\delta_{(x;\partial_x)}$ and $\delta_{(x:\partial_y)}$ refer to the expression of $\delta$ in terms of the coordinates $(x;\partial_x)$ and $(y;\partial_y)$, respectively. We have the following expression for the degree zero part of $\delta$:
        \begin{align} \label{eqn-wellDefn-linChangeCoords}
            [\delta]_{(x ; \partial_x), 0}
            = \underline{[\bm{x}]} Q \overline{[\bm{\partial_x}]}
            &= \bigg( A^{-1} \overline{[\bm{y}]} \bigg)^T Q \bigg( A^T \overline{[\bm{\partial_y}]} \bigg) \\
            &= \underline{[\bm{y}]} \bigg( (A^{-1})^T Q A^T \bigg) \overline{[\bm{\partial_y}]}
            = [\delta]_{(y ; \partial_y), 0} \nonumber
\end{align}
    \item Let $E = \sum x_i \partial_i$ be the Euler derivation and $\delta \in \mathfrak{m} \cdot \Der_R$ a semi-simple derivation. The preceding item shows there exists a set of homogeneous coordinates $(y;\partial_y)$ so that $E_{(y;\partial_y)} = \sum y_i \partial_y$ and $\delta_{(y;\partial_y),0}$ are both diagonal.
    \end{enumerate}
\end{remark}

Our interest is actually in the following submodule of $\Der_R$.

\begin{define}
    Let $h \in R$. The module of \emph{logarithmic derivations} on $h$ is
    \begin{equation*}
        \Der_R(-\log h) = \{\delta \in \Der_R \mid \delta \bullet h \in (h)\}.
    \end{equation*}
    It is easy to check that $\Der_R(-\log h)$ does not depend on the choice of generator of the $R$-ideal $(h)$.

    We also define the \emph{annihilating logarithmic derivations}
    \begin{equation*}
        \Der_R(-\log_0 h) = \{\delta \in \Der_R \mid \delta \bullet h = 0\}.
    \end{equation*}
    Since $\CC^\times$ are the only units in $R$, $\Der_R(-\log_0 h)$ also does not depend on the choice of generator of the ideal $R \cdot h$.
\end{define}

\begin{remark} \label{rmk-logDerFactorization}
    Let $h \in R$ have a factorization $h = h_1^{m_1} \cdots h_r^{m_r}$ into irreducibles; let $h_{\red} = h_1 \cdots h_r$ be the reduced version of $h$. Then
    \begin{equation*}
        \Der_R(-\log h) = \cap_k \Der_R(-\log h_k^{m_k}) = \cap_k \Der_R(-\log h_k) = \Der_R(-\log h_{\red}).
    \end{equation*}
    There is no analogous identification of $\Der_R(-\log_0 h)$ and $\Der_R(-\log_0 h_{\red}).$
\end{remark}

\begin{remark} \label{rmk-LogDerBasics} Let $f \in R$ be homogeneous of degree $d$.
    \begin{enumerate}[label=(\alph*)]
        \item The Euler operator $E = \sum x_i \partial_i \in \Der_R(-\log f)$ and satisfies $E \bullet f = df$. Consequently, $\Der_R(-\log f) = R \cdot E \oplus \Der_R(-\log_0 f)$.
        \item Since $f$ is homogeneous, $\Der_R(-\log f) = \oplus_k \Der_R(-\log f)_k$.
        \item Let $D \subset \mathbb{P}^{n-1}$ be the attached projective divisor. The following are equivalent: $\Der_R(-\log f) \subset \mathfrak{m} \cdot \Der_R$; $D$ is not a cone, i.e. there is no change of coordinates and no $g \in \CC[x_1, \dots, x_{n-1}]$ such that, after said coordinate change, $R \cdot f = R \cdot g$. See \cite[Lemma 3.2]{GrangerSchulze} for details.
        \item \label{rmk-item-LogDersemi-simpleFactors} Suppose that $f = gh$ and $\delta \in \Der_R(-\log_0 gh)$ is semi-simple. Then there exists $\tau \in \Der_R(-\log_0 g)$ that is semi-simple. Indeed, change coordinates so that $\delta \in \Der_R(-\log_0 gh)_0$ is diagonal.
             Then $\frac{\delta \bullet g}{E \bullet g}E - \delta \in \Der_R(-\log_0 g)$ and its degree zero component is diagonal, hence semi-simple.
        \item \label{rmk-item-LogDerDiagonalFactors} Having fixed coordinates, the above argument shows that if $\delta \in \Der_R(-\log_0 gh)_0$ is diagonal, then there exists $\tau \in \Der_R(-\log_0 g)_0$ that is diagonal.
        \item Item \ref{rmk-item-LogDersemi-simpleFactors} (resp. item \ref{rmk-item-LogDerDiagonalFactors}) implies $\Der_R(-\log_0 f)$ contains a semi-simple (resp. diagonal) derivation if and only if $\Der_R(-\log_0 f_{\red})$ contains a semi-simple (resp. diagonal) derivation.
    \end{enumerate}
\end{remark}

Our mission is to show logarithmic derivations along a homogeneous $f$ are compatible with weak Jordan--Chevalley decompositions: namely, if $\delta \in \Der_R(-\log f) \cap \mathfrak{m} \cdot \Der_R$, then $\delta_S, \delta_N \in \Der_R(-\log f) \cap \mathfrak{m} \cdot \Der_R$. This is a consequence of \cite[Proposition 3.2]{RodriguezEulerHomogeneity}, which is the formal power series analogue.

Let $S = \CC[[x_1, \dots, x_n]]$, noting that $R \subset S$. Let $\widehat{\mathfrak{m}}$ be the maximal ideal of $S$, which is just the extension of $\mathfrak{m} \subset R$ to $S$. The $S$-derivations $\Der_S$ contain $\Der_R$. To pin down the location of various derivations, we use $\zeta \in \Der_S$,  $\delta \in \Der_R$, $\widehat{\delta}$ for the image of $\delta$ inside $S$, and $\hat{f}$ for the image of $f \in R$ inside $S$. Any $\zeta \in \Der_S$ can be uniquely written as $\zeta = \sum_{-1 \leq k < \infty} \zeta_\ell$, where $\zeta_{\ell} \in \Der_S$ is homogeneous of degree $\ell$, with $\Der_S$ weighted analogously to $\Der_R$. Finally, we can define formal logarithmic derivations as well: for $g \in S$, we define $\Der_S(-\log g) = \{\zeta \in \Der_S \mid \zeta \bullet g \in S \cdot g\}$.

There are notions of semi-simple and topologically nilpotent elements of $\Der_S$ established by \cite[Th{\'e}or{\`e}mes 1.5, 2.2 and 2.3]{GerardLevelt}. We will not define these, but we note they proved $\Der_S$ admits a Jordan--Chevalley decomposition theorem: any $\zeta \in \Der_S$ can be uniquely written as $\zeta = \zeta_S + \zeta_N$, where $\zeta_S$ is semi-simple, $\zeta_N$ is topologically nilpotent, and $[\zeta_S, \zeta_N] = 0$. We are interested in the following application of the theory.

\begin{proposition} (\cite[Proposition 3.2]{RodriguezEulerHomogeneity}) \label{prop-logDerFormalJCDecomp}
    Let $g \in S$ and $\zeta \in \Der_S(-\log g)$. Let $\zeta = \zeta_S + \zeta_N$ be the Jordan--Chevalley decomposition of $\zeta$. Then $\zeta_S, \zeta_N \in \Der_S(-\log g)$.
\end{proposition}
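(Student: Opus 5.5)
The plan is to reduce to finite-dimensional linear algebra on the truncations $S/\widehat{\mathfrak{m}}^k$, where the semi-simple part of an endomorphism is a polynomial in that endomorphism. Such a part automatically preserves every invariant subspace, in particular the image of the ideal $(g)$. We then pass to the limit using Krull's intersection theorem.

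\emph{Reductions.} If $\zeta \notin \widehat{\mathfrak{m}} \cdot \Der_S$, then $\zeta$ is a non-vanishing vector field at the origin. In the Gérard--Levelt theory this case is the degenerate one: $\zeta_S = 0$ and $\zeta_N = \zeta$, and there is nothing to prove. I would verify this against their definitions. So assume $\zeta \in \widehat{\mathfrak{m}}\cdot \Der_S$, whence $\zeta(\widehat{\mathfrak{m}}^k) \subset \widehat{\mathfrak{m}}^k$ for all $k$. Also note that $\zeta \in \Der_S(-\log g)$ is equivalent to the ideal $J = S\cdot g$ being $\zeta$-stable, since $\zeta(gh) = \zeta(g)h + g\zeta(h)$.

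\emph{Key steps.} For each $k$, let $\zeta^{(k)}$ be the $\CC$-linear endomorphism of the finite-dimensional algebra $A_k = S/\widehat{\mathfrak{m}}^k$ induced by $\zeta$. It is a derivation of $A_k$ and stabilises the subspace $J_k = (J + \widehat{\mathfrak{m}}^k)/\widehat{\mathfrak{m}}^k$. Let $\zeta^{(k)} = \sigma_k + \nu_k$ be its linear Jordan--Chevalley decomposition. Three facts are then needed.
\begin{enumerate}[label=(\roman*)]
\item $\sigma_k$ is a polynomial in $\zeta^{(k)}$, so $\sigma_k(J_k) \subset J_k$.
\item $\sigma_k$ is again a derivation of $A_k$. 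This is the classical fact that the semi-simple part of a derivation of a finite-dimensional algebra is a derivation: on generalized eigenspaces one has $A_\lambda A_\mu \subset A_{\lambda+\mu}$.
\item The $\sigma_k$ are compatible with the surjections $A_{k+1} \to A_k$. This follows from uniqueness of the Jordan--Chevalley decomposition, because the image of $\sigma_{k+1}$ is semi-simple, commutes with the image of $\nu_{k+1}$, and that image is nilpotent.
\end{enumerate}
The compatible family $(\sigma_k)$ therefore defines a derivation $\sigma$ of $S = \varprojlim A_k$. By construction $\sigma$ is semi-simple in the Gérard--Levelt sense, $\zeta - \sigma$ is topologically nilpotent, and $[\sigma, \zeta - \sigma] = 0$ since this holds on every $A_k$. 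By the uniqueness part of \cite{GerardLevelt}, $\sigma = \zeta_S$.

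\emph{Conclusion.} From (i), $\zeta_S(g) \in J + \widehat{\mathfrak{m}}^k$ for every $k$. Since $S$ is Noetherian local, Krull's intersection theorem applied to $S/J$ gives $\bigcap_k (J + \widehat{\mathfrak{m}}^k) = J$. Hence $\zeta_S(g) \in S\cdot g$, and therefore $\zeta_N(g) = \zeta(g) - \zeta_S(g) \in S \cdot g$ as well.

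The main obstacle I expect is the identification $\sigma = \zeta_S$. It requires matching the truncation-wise construction with whatever definition of "semi-simple" and "topologically nilpotent" \cite{GerardLevelt} use. I would first try to show that their definitions are exactly the truncation-wise ones (semi-simple, resp. nilpotent, on each $A_k$), in which case uniqueness does all the work. Failing that, I would use their construction of $\zeta_S$ as a limit of polynomials in $\zeta$, which gives (i) directly without needing uniqueness.
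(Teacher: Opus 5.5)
\paragraph{Verdict.} Your argument is correct. The paper itself gives no proof of this statement: it imports it from \cite[Proposition 3.2]{RodriguezEulerHomogeneity}, which rests on \cite{GerardLevelt}. So there is no in-paper argument to compare with.

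\paragraph{The main argument.} Your route is the standard jet-truncation argument. On each $S/\widehat{\mathfrak{m}}^k$ the semi-simple part $\sigma_k$ is a polynomial in $\zeta^{(k)}$, so it preserves the image of $(g)$. Krull's intersection theorem on $S/(g)$ then lets you pass to the limit. Steps (i)--(iii) are essentially how the Gérard--Levelt decomposition is built for derivations preserving $\widehat{\mathfrak{m}}$, so $\sigma=\zeta_S$ follows from uniqueness.

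Suppose instead one defines semi-simplicity as formal diagonalizability. The missing lemma is then easy. Eigenspaces of a semi-simple map surject onto the corresponding eigenspaces of any equivariant quotient. Hence a basis of eigenvectors of $\widehat{\mathfrak{m}}/\widehat{\mathfrak{m}}^2$ lifts compatibly through all truncations to formal coordinates $y_i$ with $\sigma(y_i)=\lambda_i y_i$.

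\paragraph{One correction: the reduction step.} You cannot ``verify'' that a vector field not vanishing at the origin has $\zeta_S=0$ and $\zeta_N=\zeta$. Take $\partial_1$ as an example.
\begin{itemize}
\item It does not induce endomorphisms of the truncations $S/\widehat{\mathfrak{m}}^k$, since it only maps $\widehat{\mathfrak{m}}^k$ into $\widehat{\mathfrak{m}}^{k-1}$.
\item It is not topologically nilpotent in the $\widehat{\mathfrak{m}}$-adic sense: $\partial_1^k(e^{x_1})=e^{x_1}$ for every $k$.
\end{itemize}
The Gérard--Levelt decomposition concerns derivations with $\zeta(\widehat{\mathfrak{m}})\subset\widehat{\mathfrak{m}}$. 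That is also the only case the paper uses: Proposition~\ref{prop-weakJCdecompfromFormal} and Lemma~\ref{lem-JordanChevalleyLogDer} apply the statement to $\widehat{\delta}$ with $\delta\in\mathfrak{m}\cdot\Der_R$. So state the proposition for $\zeta\in\widehat{\mathfrak{m}}\cdot\Der_S$, rather than treating the non-vanishing case as a degenerate instance of the decomposition.
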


Deducing the same type of result for $\Der_R(-\log f)$ with $f \in R$ homogeneous is not so hard. We first need the following quick corollary.

\begin{proposition} \label{prop-weakJCdecompfromFormal}
    Let $\delta \in \mathfrak{m} \cdot \Der_R$ and $\widehat{\delta} \in \Der_S$ the image under $\Der_R \subset \Der_S$. Let $\widehat{\delta} = (\widehat{\delta})_S + (\widehat{\delta})_N$ be the Jordan--Chevalley decomposition of $\widehat{\delta}$. Then
    \begin{equation*} \label{eqn-weakJCdecompfromFormal}
        \delta = (\widehat{\delta})_{S,0} + \bigg(  (\widehat{\delta})_{N} + \sum_{1 \leq k < \infty} (\widehat{\delta})_{S,k} \bigg)
    \end{equation*}
    is the weak Jordan--Chevalley decomposition of $\delta \in \Der_R$: $(\widehat{\delta})_{S,0} = \delta_S \in \Der_R$ and $(\widehat{\delta})_{N} + \sum_{1 \leq k < \infty} (\widehat{\delta})_{S,k} = \delta_N \in \Der_R$.
\end{proposition}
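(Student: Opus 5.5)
The plan is to compare both decompositions on the finite-dimensional quotients $\widehat{\mathfrak{m}}/\widehat{\mathfrak{m}}^k$ and then use uniqueness of the Jordan--Chevalley decomposition on $\mathfrak{m}/\mathfrak{m}^2$. Since $\delta \in \mathfrak{m}\cdot\Der_R$, we have $\widehat{\delta}\bullet \widehat{\mathfrak{m}}^k \subset \widehat{\mathfrak{m}}^k$ for all $k$. So $\widehat{\delta}$ induces compatible endomorphisms $\widehat{\delta}^{(k)}$ of the finite-dimensional spaces $\widehat{\mathfrak{m}}/\widehat{\mathfrak{m}}^k$ (and of $S/\widehat{\mathfrak{m}}^k$). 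I will use the Gérard--Levelt construction \cite{GerardLevelt} in the following form: the semi-simple and topologically nilpotent parts $(\widehat{\delta})_S$ and $(\widehat{\delta})_N$ are obtained as inverse limits of the ordinary Jordan--Chevalley parts of these truncations. In particular, on each $S/\widehat{\mathfrak{m}}^k$ the part $(\widehat{\delta})_S$ acts as a polynomial (without constant term) in $\widehat{\delta}^{(k)}$. If one prefers not to rely on that description, it suffices to know that $(\widehat{\delta})_S$ and $(\widehat{\delta})_N$ preserve $\widehat{\mathfrak{m}}$ and $\widehat{\mathfrak{m}}^2$, and induce respectively a semi-simple and a nilpotent map on $\widehat{\mathfrak{m}}/\widehat{\mathfrak{m}}^2$. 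This is how I would read off the Gérard--Levelt theorems.

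Granting this, the first step is to show $(\widehat{\delta})_S, (\widehat{\delta})_N \in \widehat{\mathfrak{m}}\cdot \Der_S$. Both preserve $\widehat{\mathfrak{m}}$, so their degree $-1$ components vanish. The second step is to pass to $\widehat{\mathfrak{m}}/\widehat{\mathfrak{m}}^2 \cong \mathfrak{m}/\mathfrak{m}^2$. There $\widehat{\delta}^{(2)} = (\widehat{\delta})_S^{(2)} + (\widehat{\delta})_N^{(2)}$, the first summand is semi-simple, the second is nilpotent, and they commute because $[(\widehat{\delta})_S,(\widehat{\delta})_N]=0$. By uniqueness of the Jordan--Chevalley decomposition of $\delta^{(2)}$, which is represented by $Q$, we get $(\widehat{\delta})_S^{(2)} = Q_S$ and $(\widehat{\delta})_N^{(2)} = Q_N$ (up to the transpose convention of \eqref{eqn-derLinearPart}). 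The induced map on $\mathfrak{m}/\mathfrak{m}^2$ of a derivation in $\widehat{\mathfrak{m}}\cdot\Der_S$ sees only its degree $0$ component. Hence $(\widehat{\delta})_{S,0} = \underline{[\bm{x}]} Q_S \overline{[\bm{\partial_x}]} = \delta_S$, and this has polynomial coefficients, so it lies in $\Der_R$.

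The final step is bookkeeping. We have $\widehat{\delta} = (\widehat{\delta})_S + (\widehat{\delta})_N = (\widehat{\delta})_{S,0} + \bigl((\widehat{\delta})_N + \sum_{k\ge1}(\widehat{\delta})_{S,k}\bigr)$, and the bracketed term equals $\widehat{\delta} - \widehat{\delta_S}$. Since $\delta$ and $\delta_S$ both lie in $\Der_R$, the bracketed term is the image of $\delta - \delta_S = \delta_N \in \Der_R$. This is exactly the claimed identity, and it identifies the weak Jordan--Chevalley decomposition.

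The main obstacle is the first input: that the formal semi-simple part is compatible with the $\widehat{\mathfrak{m}}$-adic filtration, i.e.\ preserves $\widehat{\mathfrak{m}}$ and $\widehat{\mathfrak{m}}^2$ and reduces to the semi-simple part on $\widehat{\mathfrak{m}}/\widehat{\mathfrak{m}}^2$. I would extract this directly from the Gérard--Levelt construction via the finite truncations, where it is the classical fact that Jordan--Chevalley parts are polynomials in the operator and hence preserve every invariant subspace.
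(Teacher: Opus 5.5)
Your proposal is correct and follows the paper's proof: identify $(\widehat{\delta})_{S,0}$ with $\delta_S$ through the induced map on $\mathfrak{m}/\mathfrak{m}^2$, observe that the degree-zero piece has polynomial coefficients, and let the remainder be $\delta-\delta_S=\delta_N$. The only difference is that the paper cites \cite[Proposition 3.1]{RodriguezEulerHomogeneity} for the fact that $(\widehat{\delta})_{S,0}+(\widehat{\delta})_{N,0}$ is the Jordan--Chevalley decomposition of $\delta_0$, whereas you derive it from the G{\'e}rard--Levelt truncation description and uniqueness on $\mathfrak{m}/\mathfrak{m}^2$, also making explicit that the degree $-1$ components vanish (which the paper leaves implicit).
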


\begin{proof}
    Observe that $(\widehat{\delta})_{S,0},  (\widehat{\delta})_{N} + \sum_{1 \leq k \leq \infty} (\widehat{\delta})_{S,k} \in \Der_R$, since $\delta \in \Der_R$ and $\Der_{S,0} \subset \Der_R$. By \cite[Proposition 3.1]{RodriguezEulerHomogeneity}, we know that $\delta_0 = (\widehat{\delta})_{S,0} + (\widehat{\delta})_{N,0}$ is the weak Jordan--Chevalley decomopsition of $\delta_0$. In particular,  $(\widehat{\delta})_{S,0} \in \Der_R$ is semi-simple and $(\widehat{\delta})_{N,0} \in \Der_R$ is nilpotent. Since nilpotency of elements of $\mathfrak{m} \cdot \Der_R$ is determined by their degree zero part, $(\widehat{\delta})_{N} + \sum_{1 \leq k < \infty} (\widehat{\delta})_{S,k} \in \Der_R$ is nilpotent as well.
\end{proof}

We arrive at our goal.

\begin{lemma} \label{lem-JordanChevalleyLogDer}
    Let $f \in R$ be homogeneous. Consider $\delta \in \Der_R(-\log f) \cap \mathfrak{m} \cdot \Der_R$ with its weak Jordan--Chevalley decomposition $\delta = \delta_S + \delta_N$. Then $\delta_S, \delta_N \in \Der_R(-\log f)$.

    If in addition $\delta \in \Der_R(-\log_0 f)_0$, then $\delta_S, \delta_N \in \Der_R(-\log_0 f)_0$ as well.
\end{lemma}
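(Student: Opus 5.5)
The plan is to pass to the formal power series ring $S$, apply Proposition \ref{prop-logDerFormalJCDecomp}, and then use homogeneity of $f$ to pull the conclusion back to $R$ degree by degree.

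\emph{First claim.} Let $\widehat{\delta} \in \Der_S$ be the image of $\delta$. Since $\delta \bullet f \in R \cdot f$, we get $\widehat{\delta} \in \Der_S(-\log \hat f)$. By Proposition \ref{prop-logDerFormalJCDecomp}, the Jordan--Chevalley pieces $(\widehat{\delta})_S$ and $(\widehat{\delta})_N$ both lie in $\Der_S(-\log \hat f)$.

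The key observation is that, because $\hat f$ is homogeneous of degree $d$, $\Der_S(-\log \hat f)$ is closed under taking homogeneous components. Indeed, suppose $\zeta \bullet \hat f = a \hat f$ with $a = \sum_\ell a_\ell$. Comparing degree $d+\ell$ parts gives $\zeta_\ell \bullet \hat f = a_\ell \hat f$ for each $\ell$, since $\zeta_\ell \bullet \hat f$ is homogeneous of degree $d + \ell$. Hence $(\widehat{\delta})_{S,0} \in \Der_S(-\log \hat f)$. This derivation is homogeneous of degree $0$, so it lies in $\Der_R$, and the multiplier $a_0$ is a constant. Therefore it lies in $\Der_R(-\log f)$.

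By Proposition \ref{prop-weakJCdecompfromFormal}, $(\widehat{\delta})_{S,0} = \delta_S$. So $\delta_S \in \Der_R(-\log f)$, and then $\delta_N = \delta - \delta_S \in \Der_R(-\log f)$ because $\Der_R(-\log f)$ is a module. Both pieces lie in $\mathfrak{m}\cdot\Der_R$: for $\delta_S$ this holds because it is linear, and for $\delta_N$ because $\delta$ and $\delta_S$ do.

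\emph{Second claim.} Now suppose $\delta = \delta_0 \in \Der_R(-\log_0 f)_0$. Write $\delta = \underline{[\bm{x}]} Q \overline{[\bm{\partial_x}]}$. Then $\delta_S$ and $\delta_N$ are both homogeneous of degree $0$, given by $Q_S$ and $Q_N$. By the first part, $\delta_S \bullet f = c f$ for some constant $c$, and it remains to show $c = 0$.

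Consider the linear map $\phi_Q$ on $R_d$ induced by $\underline{[\bm{x}]} Q \overline{[\bm{\partial_x}]}$. The assignment $Q \mapsto \phi_Q$ is a Lie algebra representation of $\mathfrak{gl}_n$, and it carries the Jordan--Chevalley decomposition to the Jordan--Chevalley decomposition. Explicitly, $\phi_{Q_S}$ is semisimple (diagonalize $Q_S$; monomials are then eigenvectors), $\phi_{Q_N}$ is nilpotent, and the two commute. So $\phi_Q = \phi_{Q_S} + \phi_{Q_N}$ is the Jordan--Chevalley decomposition of $\phi_Q$. Since $\phi_Q(f) = 0$, the vector $f$ lies in the generalized $0$-eigenspace of $\phi_Q$. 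The semisimple part $\phi_{Q_S}$ acts on that space by $0$, giving $\phi_{Q_S}(f) = 0$, i.e.\ $\delta_S \bullet f = 0$. Consequently $\delta_N \bullet f = 0$ as well.

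The step needing the most care is the second claim, specifically that the decomposition of $Q$ transfers to the induced operator on $R_d$, so that $f$ is killed by the semisimple part. The commuting-semisimple-plus-nilpotent argument above settles this, and it also gives an alternative route to the degree-zero case of the first claim that avoids $S$ entirely.
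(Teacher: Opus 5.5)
Your proof is correct. The first claim follows the paper's own argument essentially verbatim: you pass to $S$, apply Proposition \ref{prop-logDerFormalJCDecomp}, use homogeneity of $\hat f$ to split the log condition by degree, and identify $(\widehat{\delta})_{S,0}=\delta_S$ via Proposition \ref{prop-weakJCdecompfromFormal}. Obtaining $\delta_N$ as $\delta-\delta_S$, rather than as a finite sum of homogeneous logarithmic pieces, is immaterial.

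The second claim is where you genuinely diverge. The paper notes that $\delta_N\bullet f=\lambda f$ and cites \cite[Corollary 3.3]{GrangerSchulze} to force $\lambda=0$. You instead show directly that the induced operator on $R_d$ decomposes as $\phi_Q=\phi_{Q_S}+\phi_{Q_N}$, and that this is its Jordan--Chevalley decomposition:
\begin{itemize}
\item the two pieces commute because $Q\mapsto\phi_Q$ is a Lie algebra homomorphism;
\item $\phi_{Q_S}$ is semisimple, as seen by diagonalizing $Q_S$;
\item $\phi_{Q_N}$ is nilpotent, by a weight argument.
\end{itemize}
Hence $\phi_{Q_S}$ vanishes on the generalized $0$-eigenspace, which contains $f$. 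This replaces a citation into formal-power-series theory with self-contained linear algebra.

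Your argument proves more than you claim. Run it with the generalized $\lambda$-eigenspace: if $\delta_0\bullet f=\lambda f$, then $\delta_S\bullet f=\lambda f$, and $\delta_{N,0}\bullet f=0$, which recovers the Granger--Schulze fact in this graded setting. Now $\delta_S$ depends only on $\delta_0$, and $\delta_0\in\Der_R(-\log f)_0$ because $\Der_R(-\log f)$ is graded (Remark \ref{rmk-LogDerBasics}). So this linear algebra already gives $\delta_S\in\Der_R(-\log f)$, and hence $\delta_N=\delta-\delta_S\in\Der_R(-\log f)$. That is the entire lemma, first claim included, without passing to $S$ or invoking \cite{RodriguezEulerHomogeneity}. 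The formal Gérard--Levelt/Rodríguez machinery is only really needed when $f$ is not homogeneous, where the logarithmic condition does not split by degree. For homogeneous $f$, as here, your route is the more elementary one.
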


\begin{proof}
    By \cite[Proposition 3.2]{RodriguezEulerHomogeneity}, $(\widehat{\delta})_S, (\widehat{\delta})_N \in \Der_S(-\log \widehat{f}).$ Since $f$ is homogeneous, this implies that $(\widehat{\delta})_{S,k}, (\widehat{\delta})_{N,k} \in \Der_R(-\log f)_k$ for all $k$. Since $(\widehat{\delta})_{S,0} = \delta_S \in \Der_R$ and $(\widehat{\delta})_{N} + \sum_{\substack{-1 \leq k < \infty \\ k \neq 0}} (\widehat{\delta})_{S,k} = \delta_N \in \Der_R$ are finite sums (Proposition \ref{prop-weakJCdecompfromFormal}) of homogeneous elements of $\Der_R(-\log f)$, certainly $\delta_S, \delta_N \in \Der_R(-\log f)$.

    Now assume that $\delta \in \Der_R(-\log_0 f)_0$. We know that $\delta_N \in \Der_R(-\log f)_0$ and so $\delta_N \bullet f = \lambda f$. By \cite[Corollary 3.3]{GrangerSchulze}, we see that $\lambda = 0$. So if $\delta_N$ is nonzero, then $\delta_N \in \Der_R(-\log_0 f)_0$ and we are done.
\end{proof}

Here is how we will usually use Lemma \ref{lem-JordanChevalleyLogDer}: if $f$ is homogeneous, there is a basis $\{\delta_k\}$ of $\Der_R(-\log_0 f)$ such that each $\delta_k$ is either semi-simple or nilpotent.

\vspace{5mm}

We have defined the trace of a derivation, but have not developed the notion. What follows hints at its saliency.

\begin{remark} \label{rmk-traceLeftToRight}
    Regard $\Der_R \subset \mathbb{A}_{n}$ and write $\delta = \sum_{-1 \leq j < \infty} \delta_j$ in terms of its homogeneous components. Then, using the relations in $\mathbb{A}_{n}$, we find
    \begin{equation*}
        \delta = \sum_{-1 \leq j < \infty} \delta_j = \sum_{-1 \leq j< \infty} \sum_{1 \leq i \leq n} a_{ji} \partial_i = \sum_{-1 \leq j < \infty} \sum_{1 \leq i \leq n} (\partial_i a_{ji} - \partial_{i} \bullet a_{ji}).
    \end{equation*}
    In particular, if we write $\delta_0 = \sum_{0 \leq r, c \leq n} q_{rc} x_r \partial_c = \underline{[\bm{x}]} Q \overline{[\bm{\partial_x}]}$, we find that
    \begin{equation*}
        \delta_0 = \underline{[\bm{\partial_x}]} Q^T \overline{[\bm{x}]} - \trace(\delta_0).
    \end{equation*}
\end{remark}

\subsection{Local quasihomogeneity and symmetric projective hypersurfaces}\label{subsection-lqh}

In our applications, semi-simple non-traceless elements of $\Der_R(-\log_0 f)$ are the most pertinent. In this subsection, we try to deliver intuition about the role of semi-simple logarithmic derivations.

\begin{define} \label{def-SymHypersurfaces}
    Let $f \in R$ be homogeneous. We say
    \begin{equation*}
        \text{$f$ is $t$-symmetric when $\dim_{\mathbb{C}}\bigg(\Der_R(-\log_0 f)_0 \bigg) \geq t$}.
    \end{equation*}
    When equality holds, we will say $f$ is exactly $t$-symmetric. We refine the notion of $1$-symmetry as follows. We say
    \begin{itemize}
        \item \text{$f$ is $1$-symmetric nilpotent} when $\Der_R(-\log_0 f)_0 \ni \delta$ with $\delta \neq 0$ nilpotent;
        \item \text{$f$ is $1$-symmetric semi-simple} when $\Der_R(-\log_0 f)_0 \ni \delta$ with $\delta \neq 0$ semi-simple;
        \item \text{$f$ is $1$-symmetric non-traceless semi-simple} when $\Der_R(-\log_0 f)_0 \ni \delta \neq 0$ with $\delta$ semi-simple such that $\trace(\delta) \neq 0$.
    \end{itemize}
    Finally, if $f$ is reduced, we may instead say $D_{\red} \subset \PP^{n-1}$ has these properties.
\end{define}

\begin{remark} \label{rmk-symDerRedvNonRed} \,
    \begin{enumerate}[label=(\alph*)]
        \item The properties in Definition \ref{def-SymHypersurfaces} are properties of the ideal $(f)$, not any particular generator thereof.
        \item Let $E = \sum x_i \partial_i$ be the Euler operator. Then
    \begin{align*}
       \Der_R(-\log_0 f)_0 \oplus \CC \cdot E
       &= \Der_R(-\log f)_0 \\
       &= \Der_R(-\log f_{\red})_0 = \Der_R(-\log_0 f_{\red})_0 \oplus \CC \cdot E.
    \end{align*}
    In particular, $f$ is $1$-symmetric semi-simple if and only if $f_{\red}$ is $1$-symmetric semi-simple. And $f$ is $1$-symmetric semi-simple if and only if $\Der_R(-\log f)_0$ contains two linearly independent semi-simple derivations.
    \end{enumerate}
\end{remark}

In our applications, we care most about projective hypersurfaces with isolated singularities. This geometric assumptions constrains the amount of symmetry the hypersurface can have, as du Plessis and Wall proved.

\begin{proposition} \label{prop-degreeBound} (\cite[Lemma 2.7]{PlessisWallOneDimSym})
    Let $D_{\red} \subset \PP^{n-1}$ be reduced and $f \in R$ a defining equation of its affine cone. Then:
    \begin{align*}
        \bigg[ D_{\red} \text{ has (at worst) isolated singularities} \bigg] &\text{ and } \bigg[\deg(f) \geq 4 \bigg] \\
        &\implies \dim_{\CC} \big( \Der_R(-\log_0 f)_0 \big) \leq 1.
    \end{align*}
    In other words, if $D_{\red}$ has (at worst) isolated singularities and $\deg(D_{\red}) \geq 4$, then it is $1$-symmetric if and only if it is exactly $1$-symmetric.
\end{proposition}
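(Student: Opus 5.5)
The plan is to exploit the group structure behind $\mathfrak{g} := \Der_R(-\log_0 f)_0$. Via \eqref{eqn-derLinearPart}, $\mathfrak{g}$ is identified with a Lie subalgebra of $\mathfrak{gl}_n(\CC)$: it is the Lie algebra of the (connected component of the) linear algebraic group $G = \{A \in \mathrm{GL}_n : f \circ A = f\}$. Being algebraic, $\mathfrak{g}$ is closed under taking semi-simple and nilpotent parts; this is also the content of Lemma \ref{lem-JordanChevalleyLogDer}. Since $D_{\red}$ has isolated singularities, $f$ is reduced, $D$ is not a cone, and $\Sing(D_{\red})$ is a finite $G$-stable set. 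Hence each singular point is fixed by the connected group $G$, and $G$ permutes, hence fixes, each irreducible component of $D_{\red}$.

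Suppose for contradiction that $\dim \mathfrak{g} \geq 2$. I first reduce to a short list of $2$-dimensional subalgebras $\mathfrak{h} \subset \mathfrak{g}$, using the classification of low-dimensional Lie algebras together with Jordan--Chevalley closure. Either $\mathfrak{h}$ is spanned by two commuting semi-simple elements, that is, $\mathfrak{h}$ is a $2$-dimensional torus; or $\mathfrak{h}$ contains a nonzero nilpotent element $\nu$. In the second case the second basis element can be taken to be either a nilpotent commuting with $\nu$, or a semi-simple $\sigma$ with $[\sigma,\nu] = \lambda \nu$. If $\mathfrak{g}$ contains $\mathfrak{sl}_2$, it contains such a Borel subalgebra, so this case is covered as well.

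\emph{Torus case.} Diagonalize the torus together with $E$, as in Remark \ref{rmk-coordChangeDer}. Then $f$ is a semi-invariant of a $3$-dimensional torus (including Euler) acting on $\PP^{n-1}$, so every monomial of $f$ has the same weight under a rank-$3$ lattice of characters. For $n = 3$ this forces $f$ to be a monomial, so $D_{\red} \subset \{xyz = 0\}$ has degree $\leq 3$, a contradiction. For $n \geq 4$ the monomials of $f$ all lie in an affine-linear slice of codimension $2$ in the Newton simplex. I expect to show that this makes $f$ a polynomial in at most $n-2$ ``binomial-type'' invariants. Then $D_{\red}$ is a union of torus-orbit closures, and two such components meet in a positive-dimensional set, or one component is singular along a coordinate stratum of positive dimension. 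Either way this contradicts isolatedness once $\deg f \geq 4$. The precise count, namely that a smooth-away-from-points hypersurface of degree $\geq 4$ cannot be stable under a $2$-torus, is where the degree hypothesis enters; for cubics, examples like $x_1x_2x_3$ escape.

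\emph{Nilpotent cases.} Let $U = \exp(\CC\nu) \cong \mathbb{G}_a$ act on $D_{\red}$. Its orbits are rational curves, or points, and every orbit closure meets the fixed locus $\PP(\ker \nu)$. Put $\nu$ in Jordan form and look at a smooth point $p \in D_{\red} \cap \PP(\ker\nu)$. I will show that the extra element of $\mathfrak{h}$ forces the tangent hyperplane of $D_{\red}$ along the fixed locus to contain the image of $\nu$ in a degenerate way. Concretely, I will compute the lowest-order terms of $f$ in adapted coordinates where $\nu = x_1\partial_2 + \cdots$. This should yield that $\partial f$ vanishes on a positive-dimensional subset of $D \cap \PP(\ker \nu)$, contradicting isolated singularities. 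The alternative is that $f$ depends on fewer variables, which contradicts $D$ not being a cone. I expect this unipotent analysis to be the main obstacle, especially the mixed case $[\sigma,\nu] = \lambda\nu$. There I would first try to use $\sigma$ to grade $f$ and show that the $\nu$-invariance collapses the grading to too few monomials. This would again force reducibility into components meeting along curves, or low degree.
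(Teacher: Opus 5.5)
The paper gives no argument for this statement; it simply quotes du Plessis--Wall, so there is nothing internal to compare against. Your reduction to a $2$-dimensional subalgebra and your torus case for $n=3$ are fine. There is, however, a concrete error. The inference ``$D_{\red}$ has isolated singularities, hence $D$ is not a cone'' is false: four concurrent lines in $\PP^2$, or a cone over a smooth hypersurface of $\PP^{n-2}$, is singular only at its vertex. Worse, the statement itself fails for cones. For $f=xy(x-y)(x+y)\in\CC[x,y,z]$, the curve $D_{\red}$ has one singular point and $\deg f=4$, yet $x\partial_z, y\partial_z, z\partial_z \in \Der_R(-\log_0 f)_0$. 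So ``$D$ is not a cone'' must be imposed as a hypothesis. It does hold wherever the paper applies the result: it is assumed in Theorem \ref{thm-quasiHomIsolatedBSRootCharacterization}, and in Case 2 of the proof of Proposition \ref{prop-nonTracelessCondition} the curve is not a line arrangement. Your nilpotent case ends by invoking non-cone-ness, so as written it rests on a false claim.

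Beyond that, only the $n=3$ torus case is actually proved. For $n\ge 4$ the torus case is just an expectation. The picture you propose, in which $D_{\red}$ consists of finitely many torus-orbit closures that must meet badly, breaks down once $n\ge 5$, because $\dim D=n-2$ exceeds the dimension of any orbit of a $2$-torus. The nilpotent and mixed cases have no argument at all. Even their starting point, a smooth point of $D_{\red}$ on $\PP(\ker\nu)$, need not exist. For a regular nilpotent in $\mathfrak{gl}_3$, $\PP(\ker\nu)$ is a single point, and that point is singular on every curve of Example \ref{ex:1SymmetricNilpotent} with at least two components.

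For $n=3$ you can bypass the whole Lie-theoretic case analysis. Take $\CC$-independent $\delta_1,\delta_2\in\Der_R(-\log_0 f)_0$.
\begin{itemize}
\item Suppose there is a $\mathrm{Frac}(R)$-linear relation among $E,\delta_1,\delta_2$. Applying it to $f$ shows it has no $E$-term, so $\delta_1,\delta_2$ are proportional. Writing both as polynomial multiples of one primitive vector field, $\CC$-independence forces that field to have constant coefficients. Since it kills $f$, $D$ is a cone.
\item Otherwise, the determinant of the coefficient matrix of $E,\delta_1,\delta_2$ is a nonzero cubic. It vanishes on the smooth locus of $C(D)$, since there are three vector fields tangent to a surface. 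Hence it is divisible by the reduced $f$, which forces $d\le 3$.
\end{itemize}
For $n\ge 4$ the same determinant trick only shows that the three fields are generically independent along $C(D)$. What your proposal is missing there is the additional input that genuinely uses isolatedness of $\Sing(D_{\red})$, which is the content of du Plessis--Wall's lemma.
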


Let us now connect the existence of semi-simple logarithmic derivations to other areas of math. First of all, diagonal derivations correspond to quasi-homogeneity. A vector $\bm{w} = \{w_1, \dots, w_n\} \in \mathbb{C}^n$ induces a grading on $R$: the $\bm{w}$-degree of $x_i$ is $\deg_{\bm{w}}(x_i) = w_i$. In particular, the standard grading of $R$ is the $\bm{1}$-grading. We call the $\bm{w}$ the \emph{weights} of the grading. We say the $\bm{w}$-grading is positive when $\bm{w} \in \mathbb{R}_{> 0}^{n}$.

\begin{define} \label{def:quasiHomogeneous}
    We say $g \in R$ is positively weighted homogeneous or quasi-homogeneous when there exist positive weights $\bm{w} \in \mathbb{R}_{> 0}^n$ such that $g$ is homogeneous with respect to the $\bm{w}$-grading of $R$.
\end{define}

A polynomial $g$ is $\bm{w}$-homogeneous exactly when $\delta = \sum w_i x_i \partial_i \in \Der_R(-\log g)$, in which case $\delta \bullet g = \deg_{\bm{w}}(g) g$. Conversely, suppose that $\tau = \sum a_i x_i \partial_i \in \Der_R(-\log g)$, where $a_1, \dots, a_n \in \mathbb{C}$. For every monomial $x^J$, $\tau \bullet x^J = (\sum a_i j_i)x^J$. In particular, as $J$ ranges over all monomials in the monomial support of $g$, the sum $(\sum w_i j_i)$ $\deg_{\bm{w}}(g)$ does not change. Say it equals $\lambda$. Then $g$ is $\bm{w}$-homogeneous with $\deg_{\bm{w}}(g) = \lambda$.

Just as standard homogeneity of $f \in R$ is a property of $f_{\red} \in R$, so is $\bm{w}$-homogeneity of $g \in R$.

\begin{lemma} \label{lem-quasiHomFactors}
    Factor $g = g_1^{m_1} \cdots g_r^{m_r} \in R$ into irreducibles. The following are equivalent:
    \begin{enumerate}[label=(\alph*)]
        \item $g$ is $\bm{w}$-homogeneous;
        \item each $g_k$ is $\bm{w}$-homogeneous;
        \item $g_{\red} = g_1 \cdots g_r$ is $\bm{w}$-homogeneous.
    \end{enumerate}
\end{lemma}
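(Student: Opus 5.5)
The plan is to go through the characterization by the diagonal derivation $\delta_{\bm w} = \sum w_i x_i \partial_i$ recorded just before the statement: $g$ is $\bm{w}$-homogeneous exactly when $\delta_{\bm w} \bullet g \in \CC \cdot g$, i.e.\ when $\delta_{\bm w} \in \Der_R(-\log g)$ with eigenvalue $\deg_{\bm w}(g)$. Conversely, if a diagonal $\tau = \sum a_i x_i \partial_i$ satisfies $\tau \bullet g = \lambda g$, then every monomial of $g$ has the same $\bm{a}$-weight $\lambda$. With this dictionary, the equivalences reduce to the factorization identity of Remark \ref{rmk-logDerFactorization}, $\Der_R(-\log g) = \cap_k \Der_R(-\log g_k) = \Der_R(-\log g_{\red})$, combined with a check that the eigenvalues stay scalars.

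For (a) $\Rightarrow$ (b), I would avoid appealing to the log-derivation identity directly, since it only yields $\delta_{\bm w} \bullet g_k \in (g_k)$ and not $\in \CC \cdot g_k$. Instead I would use the standard fact that in a graded domain, factors of a homogeneous element are homogeneous. This holds for an $\mathbb{R}$-grading as well (and even a $\CC$-grading, after choosing a total order on the value group): if $g = pq$, take the highest and lowest weight components of $p$ and $q$; their products give the highest and lowest weight components of $g$, which must coincide, so $p$ and $q$ are each concentrated in a single weight. Applying this repeatedly shows that each $g_k$ is $\bm{w}$-homogeneous.

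The implications (b) $\Rightarrow$ (c) and (b) $\Rightarrow$ (a) are immediate, because products of $\bm{w}$-homogeneous elements are $\bm{w}$-homogeneous, with degrees adding. For (c) $\Rightarrow$ (b), apply the same factor argument to $g_{\red}$, whose irreducible factors are the $g_k$.

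The only mild obstacle is the factor-homogeneity step when weights are allowed in $\CC$ rather than $\mathbb{R}_{>0}$. Choosing any total order on the additive group of $\CC$ compatible with addition, for example the lexicographic order on $(\mathrm{Re}, \mathrm{Im})$, makes the top and bottom component argument go through verbatim. Since $R$ is a domain, the top components multiply to a nonzero element, so the argument does not break down.
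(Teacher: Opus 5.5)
Your proof is correct, but it takes a different route from the paper. The paper argues through logarithmic derivations. A polynomial $h$ is $\bm{w}$-homogeneous iff $\delta_{\bm{w}}=\sum w_i x_i\partial_i \in \Der_R(-\log h)$, and Remark \ref{rmk-logDerFactorization} gives $\Der_R(-\log g)=\bigcap_k \Der_R(-\log g_k)=\Der_R(-\log g_{\red})$. So (a), (b), (c) all become the single statement $\delta_{\bm{w}}\in \Der_R(-\log g_{\red})$. You instead use the ring-theoretic fact that in a domain graded by a totally ordered abelian group, every factor of a homogeneous element is homogeneous, because top and bottom components multiply. This makes your argument self-contained: it does not rely on the prime-factorization argument behind Remark \ref{rmk-logDerFactorization}, and it works verbatim for complex weights. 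The gap you cite as your reason for avoiding derivations is real in the paper's exposition, but it is easily closed. The derivation $\delta_{\bm{w}}$ rescales each monomial, so $\supp(\delta_{\bm{w}}\bullet h)\subseteq \supp(h)$. Comparing total degrees in $\delta_{\bm{w}}\bullet h = a h$ then forces $a\in \CC$, so $\delta_{\bm{w}}\bullet g_k\in (g_k)$ does imply $\delta_{\bm{w}}\bullet g_k\in \CC\cdot g_k$. The paper's route is a one-liner given its surrounding machinery and fits its theme of reading homogeneity off log derivations; yours is more elementary and extends to any torsion-free grading. Your opening paragraph sets up the derivation dictionary and then never uses it, so you could drop it.
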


\begin{proof}
    Set $\delta = \sum w_i x_i \partial_i$. Then $h$ is $\bm{w}$-homogeneous if and only if $\delta \in \Der_R(-\log h)$. Now use Remark \ref{rmk-logDerFactorization}.
\end{proof}

A diagonal derivation $\delta = \sum w_i x_i \partial_i$ induces a $\CC^\star$ action on $\PP^{n-1}$:
    \begin{equation*}
        \CC^\star \times \PP^{n-1} \ni (\lambda, [p_1: \cdots : p_n]) \mapsto [\lambda^{w_1} p_1 : \cdots : \lambda^{a_n} w_n] \in \PP^{n-1}.
    \end{equation*}
For $f$ homogeneous, membership of $\delta \in \Der_R(-\log f)$ means this action fixes $D_{\red}$. This is the origin of `symmetric projective hypersurfaces' for du Plessis and Wall: they say a projective hypersurface $Z$ is $t$-symmetric when there is a $t$-parameter subgroup of $\text{PGL}_n(\CC)$ that preserves $Z$.

It is not hard to prove the following are equivalent, with $f \in R$ is homogeneous, $D \subset \PP^{n-1}$ its projective divisor, and $f_{\red}$ and $D_{\red}$ the reduced analogs:
\begin{itemize}
    \item $f$ is semi-simple $1$-symmetric;
    \item $f_{\red}$ is semi-simple $1$ symmetric;
    \item $\Der_R(-\log_0 f)$ contains a semi-simple derivation;
    \item $\Der_R(-\log_0 f_{\red})$ contains a semi-simple derivation;
    \item after some homogeneous linear coordinate change $f$ is quasi-homogeneous with respect to some weights $\bm{w}$;
    \item after some homogeneous linear coordinate change $f_{\red}$ is quasi-homogeneous with respect to some weights $\bm{w}$;
    \item after some homogeneous linear coordinate change, the $\CC^\star$ action $[p_1: \cdots : p_n] \mapsto [\lambda^{w_1} p_1 \cdots : \lambda^{w_n} p_n]$ preserves $D_{\red}$;
    \item there is a semi-simple $A \in \text{GL}_n(\CC)$ such that $\overline{A} \in \text{PGL}_n(\CC)$ fixes $D_{\red}$.
\end{itemize}

\begin{remark}
    The presence of a semi-simple non-traceless logarithmic derivation is more subtle: many of the equivalences above break down.
\end{remark}

\begin{example} \label{ex:1SymmetricNilpotent}
    Du Plessis and Wall \cite[pg.~123]{PlessisWall3Dim} proved the polynomials $f = x^e\prod_{i}(y^2 - 2xz + 4a_i x^2)$, with $a_i \in \CC^\times$ distinct and $e \in \{0,1\}$, enumerate (up to coordinate change) all $1$-symmetric nilpotent polynomials in three variables. They also proved the singularities of the attached projective divisor $D \subset \PP^2$ are quasi-homogeneous only when $\deg(f) \leq 4$. And when $\deg(f) \geq 4$, these $f$ are not semi-simple.
\end{example}

Again, in applications we often consider projective hypersurfaces with (at worst) quasi-homogeneous isolated singularities. So at every singular point, the hypersurface admits a local defining equation that is positively weighted homogeneous. These homogeneity phenomena are well-studied and later play a role for us.

\begin{define} \label{def-localQuasiHom}
    Let $G \subset X$ be an analytic or algebraic divisor on the $n$-dimensional Stein manifold $X$. We say $G$ is positively weighted homogeneous at $\mathfrak{x} \in G$ if there exists local analytic coordinates $(x, \partial_x)$ around $\mathfrak{x} \in X$ and a local defining equation $g \in R = \CC[x_1, \dots, x_n]$ of $G$ at $\mathfrak{x}$, such that $g$ is positively weighted homogeneous. We say $G$ is \emph{locally quasi-homogeneous} when it is positively weighted homogeneous at all $\mathfrak{x} \in G$. Finally, we say a polynomial or analytic function is \emph{locally quasi-homogeneous} when the associated divisor is so.
\end{define}

\begin{lemma} \label{lem-DivisorNonReduced-QuasiHom}
    Let $g \in R$ and $g_{\red}$ its reduced form; let $G \subset \CC^{n}$ be the divisor attached to $g$ and $G_{\red} \subset \CC^{n}$ the divisor attached to $f_{\red}$. Then the following are equivalent:
    \begin{enumerate}[label=(\alph*)]
        \item $g$ is locally quasihomogeneous;
        \item $g_{\red}$ is locally quasihomogeneous;
        \item $D$ is locally quasihomogeneous;
        \item $D_{\red}$ is locally quasihomogeneous.
    \end{enumerate}
\end{lemma}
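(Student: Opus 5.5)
The plan is to reduce everything to a statement about logarithmic derivations of analytic germs. In the statement $D$ and $D_{\red}$ should be read as $G$ and $G_{\red}$, and $f_{\red}$ as $g_{\red}$. With that reading, (a)$\iff$(c) and (b)$\iff$(d) are essentially definitional. By Definition \ref{def-localQuasiHom}, a polynomial is locally quasi-homogeneous exactly when its divisor is. At every $\mathfrak{x}$, the germ of $g$ (resp. $g_{\red}$) is a local defining equation of $G$ (resp. $G_{\red}$), and the property at $\mathfrak{x}$ only asks for \emph{some} local defining equation in \emph{some} coordinates. So the real content is (c)$\iff$(d), and it is a pointwise statement at each $\mathfrak{x} \in G = G_{\red}$ (same support).

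The key step is to characterize positive weighted homogeneity of a germ $h \in \mathscr{O} = \mathscr{O}_{X,\mathfrak{x}}$ by logarithmic derivations. I will show that the germ $\{h = 0\}$ is positively weighted homogeneous at $\mathfrak{x}$ if and only if there are local coordinates $x$ and positive weights $\bm{w}$ such that $\delta_{\bm{w}} = \sum w_i x_i \partial_i$ lies in $\Der_{\mathscr{O}}(-\log h)$.

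The forward direction is immediate: if $h' = u h$ is a $\bm{w}$-homogeneous polynomial, then $\delta_{\bm{w}} \bullet h' = \deg_{\bm{w}}(h') h'$, so $\delta_{\bm{w}}$ preserves the ideal $(h') = (h)$.

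For the converse, suppose the principal ideal $I = (h)$ satisfies $\delta_{\bm{w}}(I) \subset I$. I will show that $I$ is generated by a $\bm{w}$-homogeneous polynomial.
\begin{enumerate}[label=(\roman*)]
    \item Since the weights are positive, $\delta_{\bm{w}}$ acts semisimply on each finite-dimensional quotient $\mathscr{O}/\mathfrak{m}^N$. Its eigenspaces are spanned by the monomials of a fixed weight.
    \item The image of $I$ in $\mathscr{O}/\mathfrak{m}^N$ is $\delta_{\bm{w}}$-stable, so it is a sum of weight spaces. Hence, for every $N$, the $\bm{w}$-homogeneous components of each element of $I$ lie in $I + \mathfrak{m}^N$.
    \item By Krull's intersection theorem, which applies since $\mathscr{O}$ is Noetherian local and $I$ is closed, every $\bm{w}$-homogeneous component of an element of $I$ lies in $I$ itself.
    \item Write $h = \sum_c h_c$ with lowest nonzero weight piece $h_{c_0}$. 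By (iii), $h_{c_0} \in I$, so $h_{c_0} = v h$ for some $v \in \mathscr{O}$.
    \item Comparing lowest-weight parts of $h_{c_0} = v h$ forces $v(0) \neq 0$. Thus $v$ is a unit and $(h) = (h_{c_0})$.
    \item Positive weights give only finitely many monomials of weight $c_0$, so $h_{c_0}$ is a $\bm{w}$-homogeneous polynomial.
\end{enumerate}
Step (iii) is the main obstacle, since it needs the closedness of ideals in the analytic local ring. An alternative route is to invoke Saito's characterization of quasi-homogeneity; I would try the direct filtration argument first.

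Finally, I combine this with the local analogue of Remark \ref{rmk-logDerFactorization}. Factor the germ as $g = u\, h_1^{m_1} \cdots h_r^{m_r}$ into analytic irreducibles, with $u$ a unit. Because $\mathscr{O}$ is a UFD, the same argument as in that remark gives
\begin{equation*}
\Der_{\mathscr{O}}(-\log g) = \bigcap_k \Der_{\mathscr{O}}(-\log h_k) = \Der_{\mathscr{O}}(-\log g_{\red}).
\end{equation*}
Therefore, in any given coordinates and for any given weights, $\delta_{\bm{w}}$ is logarithmic along $g$ if and only if it is logarithmic along $g_{\red}$. By the characterization above, $G$ is positively weighted homogeneous at $\mathfrak{x}$ if and only if $G_{\red}$ is. This works in the same coordinates and with the same weights, mirroring Lemma \ref{lem-quasiHomFactors}. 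Ranging over all $\mathfrak{x}$ gives (c)$\iff$(d), and with the definitional equivalences this completes the proof.
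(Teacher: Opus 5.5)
Your proof is correct, and it rests on the same mechanism as the paper's. The paper's one-line proof applies Lemma \ref{lem-quasiHomFactors} at each point. That lemma is exactly the criterion that $h$ is $\bm{w}$-homogeneous if and only if $\sum w_i x_i \partial_i \in \Der_R(-\log h)$, combined with Remark \ref{rmk-logDerFactorization}. Your reading of $D$, $D_{\red}$, $f_{\red}$ as $G$, $G_{\red}$, $g_{\red}$ is the intended one; this is how the lemma is used in Remark \ref{rmk-basicsPosWeightedHom}.

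The difference lies in which ring you work in. The paper applies a polynomial-ring statement to polynomial local equations, leaving implicit how polynomial factorization interacts with the analytic germ at $\mathfrak{x}$. For instance, passing from a $\bm{w}$-homogeneous polynomial equation of $G_{\red}$ to one of $G$ needs that the analytic branches at the origin of a $\bm{w}$-homogeneous polynomial are given by its $\bm{w}$-homogeneous polynomial factors. Only then can the multiplicities of $G$ be attached to those factors.

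You instead work in $\mathscr{O}_{X,\mathfrak{x}}$ throughout. You prove the local criterion directly: a principal ideal stable under $\delta_{\bm{w}}$, with positive weights, is generated by a $\bm{w}$-homogeneous polynomial. You then use $\Der_{\mathscr{O}}(-\log g) = \Der_{\mathscr{O}}(-\log g_{\red})$, which only needs $g$ and $g_{\red}$ to have the same prime factors in $\mathscr{O}$. This costs more words but makes explicit what the paper's one-liner leaves tacit; your lowest-weight comparison in (iv)--(v) is precisely the fact needed.

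Two small remarks. Step (iii) is not really an obstacle: Krull's intersection theorem already says every ideal of a Noetherian local ring is $\mathfrak{m}$-adically closed. Also, positivity of the weights is not needed for the semisimplicity in (i). It is used only in (iv)--(vi): for the existence of a lowest-weight piece, to conclude that weight $0$ means constant, and for finiteness of the weight-$c_0$ part.
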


\begin{proof}
    Use Proposition \ref{lem-quasiHomFactors} point-by-point.
\end{proof}

\begin{remark} \label{rmk-basicsPosWeightedHom}
    Let $D \subset X$ be an analytic or algebraic divisor.
    \begin{enumerate}[label=(\alph*)]
        \item The divisor $D$ is positively weighted homogeneous at any point in $D_{\red, \reg}$.
        \item Suppose $Z$ is reduced and has an isolated singularity at $\mathfrak{x}$. Then $Z$ is positively weighted homogeneous at $\mathfrak{x}$ if and only if the Milnor and Tjurina numbers of $Z$ at $\mathfrak{x}$ agree.
        \item Suppose that $Z \subset \PP^{n-1}$ is reduced with quasi-homogeneous isolated singularities. Then the affine cone $C(Z) \subset \CC^{n}$ is locally quasi-homogeneous. Certainly $C(Z)$ is positively weighted homogeneous at $0$: consider its standard homogeneous defining equation. For the other points, consider the standard de-homogenized charts. On such a chart the hypersurface is isomorphic to $\CC \times (D_{\red} \cap \{x_i = 1\})$. Since $D_{\red} \cap \{x_i = 1\}$ has only isolated singularities, all of which are positively weighted homogeneous, we see $C(D_{\red})$ is positively weighted homogeneous away from the origin as well.
        \item \label{item-basicsPosWeightedHom-ourCase} Consider $D \subset \mathbb{P}^{n-1}$ such that $D_{\red}$ has (at worst) isolated quasi-homogeneous singularities. Then the affine cone $C(D) \subset \mathbb{C}^n$ is locally quasi-homogeneous. This follows by the previous item and Lemma \ref{lem-DivisorNonReduced-QuasiHom}.
    \end{enumerate}
\end{remark}

\subsection{Characterizing Root-hood of $-n/d$}

We return to Bernstein--Sato polynomials. For an arbitrary homogeneous $f \in R$ we can use the language of semi-simple non-traceless logarithmic derivations to constrain the roots of $b_f(s)$ that are not roots of $b_D(s)$. In particular, we obtain a criterion for $-n/d \notin \Zeroes(b_f(s)) \setminus \Zeroes(b_D(s))$.

Recall that $M_f = \frac{\mathbb{A}_n[s]}{\ann_{\mathbb{C}[s]} f^s + \mathbb{A}_n[s] \cdot f}$ and that the Bernstein--Sato polynomial $b_f(s)$ of $f$ is (the monic generator of) the $\mathbb{C}[s]$-annihilator of $M_f$.

\begin{proposition} \label{prop-BSMultipleBeta}
    Let $f \in R$ be homogeneous of degree $d$ with associated projective divisor $D \subset \PP^{n-1}$. For each $\delta \in \Der_R(-\log_0 f)_0$, consider the following subspace of $R_p$:
    \begin{equation*}
        \Xi_{p, \delta} = \{ \sum \CC \cdot Q \mid Q \in R_p, \delta \in \Der_R(-\log Q), \delta \bullet Q \neq -\trace(\delta) Q \}.
    \end{equation*}
    In particular,
    \begin{equation*}
        \Xi_{0, \delta} = \begin{cases}
            \CC \cdot 1 \text{ if } 0 \neq \trace(\delta) \\
            0 \text{ if } 0 = \trace(\delta).
        \end{cases}
    \end{equation*}
    Also consider the span of all these subspaces of $R_p$:
    \begin{equation*}
        \Xi_p = \sum_{\delta \in \Der_R(-\log_0 f)_0} \Xi_{p, \delta},
    \end{equation*}
    where we convene that $\Xi_p = 0$ when $\Der_R(-\log_0 f)_0 = 0$.

    Define
    \begin{equation*}
        \beta(s) = \prod_{\substack{0 \leq p < (d-1)n \\ \Xi_p \neq R_p}} (s + \frac{p + n}{d}).
        \end{equation*}
    Then we have the following divisibility statement involving Bernstein--Sato polynomials:
    \begin{equation*}
        b_D(s) \mid b_f(s) \mid b_D(s) \cdot \beta(s).
    \end{equation*}
\end{proposition}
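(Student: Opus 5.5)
The first divisibility $b_D(s) \mid b_f(s)$ was already recorded in the remark after Definition \ref{def-BSpolyProjHypersurface}. So the work is in the second divisibility, and we will localize away from the origin first. Since $b_D(s)$ is the lcm of the local Bernstein--Sato polynomials of $f$ at points of $\{f=0\}\setminus 0$, it kills $M_f$ off the origin. Hence $b_D(s)\cdot M_f$ is supported at $0$, and it suffices to show $\beta(s)$ annihilates the submodule $N := b_D(s) M_f$ (or any $\mathbb{A}_n[s]$-module supported at $0$ that is a subquotient of $M_f$). A module supported at the origin is killed by a power of $\mathfrak{m}$ locally. The Euler operator gives a grading: on $R[s,1/f]f^s$ we have $E\cdot f^s = ds f^s$, so $E - ds$ annihilates $f^s$. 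In $M_f$, therefore, $s$ acts on a class $[P]$ through $(E - \deg)/d$-type relations.

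Concretely, we will use the standard argument (as in Walther and in Saito's work on homogeneous polynomials). Any element of $M_f$ supported at $0$ is, after multiplying by $b_D(s)$, represented by a combination of $\partial$-monomials applied to classes of homogeneous polynomials. Using $E f^s = ds f^s$ and $\partial^\alpha x^\gamma$ manipulations, one reduces to classes $[Q]$ with $Q\in R_p$, on which $s$ acts by $-(p+n)/d$. Indeed, the relation $\sum \partial_i x_i Q = (E + n)Q$ together with $E\cdot Q f^s = (p + ds)Qf^s$ shows that $\sum_i \partial_i x_i Q f^s = (p+n+ds)Qf^s$. Thus, modulo $\mathbb{A}_n[s]\cdot \mathfrak{m}$-images, which vanish in the socle-type quotient, $(s + \frac{p+n}{d})[Q] = 0$. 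Bounding $p < (d-1)n$ comes from the fact that $b_D(s)$ times the class is killed by $\mathfrak{m}^{(d-1)n}$ via the Jacobian ideal: $b_D(s)\cdot R_{\geq (d-1)n}$ lands in $(\partial f)$, whose elements are absorbed through $\partial_i f^{s+1} = (s+1)(\partial_i f) f^s$. So the candidate roots are $-(p+n)/d$ for $0\le p<(d-1)n$.

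The key new step is removing the factor for $p$ with $\Xi_p = R_p$. For $\delta\in\Der_R(-\log_0 f)_0$, $\delta f^s = 0$, so $\delta\in\ann f^s$. Using Remark \ref{rmk-traceLeftToRight}, for $Q\in R_p$ with $\delta\bullet Q = \lambda Q$ we get $\delta Q = Q\delta + \lambda Q$. Then $\underline{[\bm{\partial_x}]}Q^T\overline{[\bm{x}]}\,Q f^s = (\lambda+\trace(\delta)) Q f^s$, and the left side lies in $\sum \partial_i \mathbb{A}_n[s]$, hence vanishes in the degree-$p$ graded piece responsible for the root $-(p+n)/d$. Thus $Q$ dies there whenever $\lambda\neq -\trace(\delta)$. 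Summing over $\delta$, the image of $R_p$ is zero when $\Xi_p=R_p$, and the factor $(s+\frac{p+n}{d})$ is unnecessary.

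The main obstacle I expect is making the "graded piece at $0$" bookkeeping rigorous, including the exact range $p<(d-1)n$. The plan is to filter $b_D(s)M_f$ by the images of $R_{\le p}$, check each successive quotient is killed by $s+\frac{p+n}{d}$, and check it vanishes when $\Xi_p=R_p$.
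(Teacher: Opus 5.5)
You have two genuine gaps.

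\textbf{The range $p<(d-1)n$.} Your justification for this range fails. You claim that $b_D(s)\cdot R_{\geq (d-1)n}$ lands in the Jacobian ideal $(\partial_1\bullet f,\dots,\partial_n\bullet f)$. That would require the Jacobian ideal to be $\mathfrak{m}$-primary, i.e.\ $f$ would need an isolated singularity at $0$. This is false as soon as $D_{\red}$ is singular or $f$ is non-reduced, which is exactly the situation the proposition is meant for. Moreover, even for $Q$ in the Jacobian ideal, $Q\cdot\overline{1}$ need not vanish in $M_f$: the relation $\partial_i\cdot f^{s+1}=(s+1)(\partial_i\bullet f)f^s$ only kills $(s+1)Q\cdot\overline{1}$. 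So nothing ties the exponent $K$ with $\mathfrak{m}^K\cdot\overline{b_D(s)}=0$ to $(d-1)n$.

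The paper gets the range differently. It first proves $b_f(s)\mid b_D(s)\beta_0(s)$, where $\beta_0$ runs over all $p<K$ with $\Xi_p\neq R_p$. It then discards the factors with $p\geq (d-1)n$, using that every root of $b_f(s)$ lies in $(-n,0)$ (Kashiwara, Saito), and $-\frac{p+n}{d}>-n$ exactly when $p<(d-1)n$.

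\textbf{Turning the identities into annihilation.} Write $\delta=\sum_i a_i\partial_i$ with $a_i\in R_1$. Your two identities
\[
(ds+p+n)Q\cdot\overline{b_D(s)}=\sum_i\partial_i x_iQ\cdot\overline{b_D(s)},\qquad (\lambda+\trace(\delta))Q\cdot\overline{b_D(s)}=\sum_i\partial_i a_iQ\cdot\overline{b_D(s)}
\]
are exactly the paper's computations. What is missing is the mechanism that turns them into annihilation, and the plan you state would not work.
\begin{itemize}
\item The $\CC[s]$-spans of $R_{\leq p}\cdot\overline{b_D(s)}$ do not exhaust $N_f=b_D(s)M_f$, since no $\partial$'s appear.
\item The Euler identity expresses $(ds+p+n)Q\cdot\overline{b_D(s)}$ through $\sum_i\partial_i x_iQ\cdot\overline{b_D(s)}$, which has no reason to lie in the span of lower-degree classes. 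So your successive quotients are not killed by $s+\frac{p+n}{d}$.
\item Likewise, membership in $\sum_i\partial_i\mathbb{A}_n[s]$ does not by itself make anything vanish.
\end{itemize}

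The missing observation is that $x_iQ,\ a_iQ\in\mathfrak{m}^{p+1}$. Hence $\sum_i\partial_i x_iQ$ and $\sum_i\partial_i a_iQ$ lie in the \emph{left} ideal $\mathbb{A}_n[s]\cdot\mathfrak{m}^{p+1}$. So you should use the decreasing filtration $\mathbb{A}_n[s]\mathfrak{m}^q\cdot\overline{b_D(s)}$. It vanishes for $q\geq K$, and its $q$-th graded piece is generated by the image of $R_q\cdot\overline{b_D(s)}$. Since $s$ is central, that piece is killed by $s+\frac{q+n}{d}$, and it is zero when $\Xi_q=R_q$. This is the paper's argument.

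Alternatively, your $\sum_i\partial_i$ idea can be made rigorous. Pass to $N_f/\sum_i\partial_i N_f$, which is spanned by the images of $R_p\cdot\overline{b_D(s)}$, with $s$ acting on them by $-\frac{p+n}{d}$. Then invoke Kashiwara's equivalence: with $V=\{v\in N_f\mid \mathfrak{m}v=0\}$, one has $N_f=V\oplus\sum_i\partial_iN_f$ and $N_f=\CC[\partial_1,\dots,\partial_n]\cdot V$. This gives $\ann_{\CC[s]}N_f=\ann_{\CC[s]}\bigl(N_f/\sum_i\partial_iN_f\bigr)$. One of these two ingredients has to be supplied for the proof to go through.
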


\begin{remark}
    Suppose that $\Der_R(-\log_0 f)_0$ contains a semi-simple non-traceless derivation $\tau$. Then $\Xi_{0, \tau} = \mathbb{C} \cdot 1$ and $-n/d \notin \Zeroes(\beta(s))$. In this case, Proposition \ref{prop-BSMultipleBeta} implies the equivalence $-3/d \in \Zeroes(b_f(s)) \iff -3/d \in \Zeroes(b_D(s))$.
\end{remark}


\begin{proof}
    Consider the $\mathbb{A}_n[s]$-submodule $N_f = b_D(s) \cdot M_f \subset M_f$. Recall we defined $M_f$ as a quotient $\mathbb{A}_n[s] \twoheadrightarrow M_f$. For $P(s) \in \mathbb{A}_n[s]$, let $\overline{P(s)} \in M_f$ denote its image under this map. So $M_f$ is cyclically generated by $\overline{1}$ and $N_f$ by $\overline{b_D(s)}$. Let $\mathfrak{m} \subset R$ be the irrelevant ideal; we label the left ideal extension of $\mathfrak{m}^q$ to $\mathbb{A}_n[s]$ by $(\mathfrak{m}^q)^e$. By construction, $N_f$ is supported at $0$ (see Remark \ref{rmk-BSfacts}), so for any $P(s) \in \mathbb{A}_{n}[s]$ there exists $N_{P(s)} \in \mathbb{Z}_{\geq 0}$ such that $\mathfrak{m}^{N_{P(s)}} \cdot \overline{P(s) b_D(s)} = 0$. In particular there exists $N \in \mathbb{Z}_{\geq 0}$ such that $(\mathfrak{m}^N)^e \cdot \overline{b_D(s)} = 0$ in $N_f$.

    For $0 \leq q \leq N-1$, define
    \begin{equation*}
        \beta_q(s) = \prod_{\substack{q \leq p \leq N-1 \\ \Xi_p \neq R_p}} (s + \frac{p + n}{d}).
    \end{equation*}
    We claim that
    \begin{equation} \label{eqn-claimedBetaFact}
        \frac{\beta_{q}(s)}{\beta_{q+1}(s)} \cdot (\mathfrak{m}^q)^e \cdot \overline{b_D(s)} \subset (\mathfrak{m}^{q+1})^e \cdot \overline{b_D(s)}.
    \end{equation}
    Certifying \eqref{eqn-claimedBetaFact} terminates the proof. Indeed, $\beta_0(s) \cdot \overline{b_D(s)} = 0$ by an inductive application of \eqref{eqn-claimedBetaFact}. Since $N_f$ is cyclically generated by $\overline{b_D(s)}$ and $\CC[s] \subset \mathbb{A}_n[s]$ is central, $\beta_0(s)$ annihilates $N_f$. So $b_D(s) \beta_0(s) $ kills $M_f$, which is to say $b_f(s)$ divides $b_D(s) \beta_0(s)$. As the zeroes of $b_f(s)$ lie in $(-n,0)$ \cite{KashiwaraBFunctions, SaitoMicrolocalBFunction}, we conclude that $b_f(s)$ divides $b_D(s) \beta(s)$.

    So we must prove \eqref{eqn-claimedBetaFact}. First, fix $\delta = \sum a_i \partial_i \in \Der_R(-\log_0 f)_0$ (so $a_i \in R_1$) and $Q \in R_q$. Suppose that $\delta \in \Der_R(-\log Q)$. Because of the $\bm{1}$-grading of all actors, there exists $\lambda \in \CC$ such that $\delta \bullet Q = \lambda Q$.


    Observe:
    \begin{align} \label{eqn-beta-1}
        (\mathfrak{m}^{q+1})^e \cdot \overline{b_D(s)}
        \ni \bigg((\sum_i \partial_i a_i)Q \bigg) \cdot \overline{b_D(s)}
        &= \bigg( b_D(s) \sum_i \partial_i a_i \bigg) \cdot \overline{Q} \\
        &= b_D(s)\bigg(\sum_i a_i \partial_i + \trace(\delta)\bigg) \cdot \overline{Q} \nonumber \\
        &= b_D(s) \cdot \overline{ (\delta + \trace(\delta) )Q} \nonumber \\
        &= b_D(s) \cdot \overline{Q (\delta + \lambda + \trace(\delta))} \nonumber \\
        &= \bigg(b_D(s) Q (\delta + \lambda + \trace(\delta)) \bigg) \cdot \overline{1} \nonumber \\
        &= \bigg(b_D(s) Q (\lambda + \trace(\delta)) \bigg) \cdot \overline{1} \nonumber \\
        &= \bigg((\lambda + \trace(\delta))Q\bigg) \cdot \overline{b_D(s)}. \nonumber
    \end{align}
    Note that $\delta \cdot f^s = 0$ in $M_f$, which is to say that $\delta \cdot \overline{1} = 0$ in $M_f$. So the antepenultimate equality is true.

    An entirely similar argument, now with $E = \sum_i x_i \partial_i$ and $Q \in R_q$ arbitrary, yields:
    \begin{align} \label{eqn-beta-2}
        (\mathfrak{m}^{q+1})^e \cdot \overline{b_D(s)}
        \ni \bigg( (\sum_i \partial_i x_i) Q \bigg) \cdot \overline{b_D(s)}
        &= \bigg( b_D(s) \sum \partial_i x_i \bigg) \cdot \overline{Q} \\
        &= \bigg(b_D(s)( E+ \trace(E)) \bigg)\cdot \overline{Q} \nonumber \\
        &= \bigg(b_D(s) (E + n) Q \bigg) \cdot \overline{1} \nonumber \\
        &= \bigg( b_D(s) Q (E + q + n) \bigg) \cdot \overline{1} \nonumber \\
        &= \bigg( b_D(s) Q (ds + q + n) \bigg) \cdot \overline{1} \nonumber \\
        &= \bigg((ds + q + n) Q \bigg) \cdot \overline{b_D(s)}. \nonumber
    \end{align}
    Here the penultimate equality uses that $(E - ds) \cdot f^s = 0$ in $M_f$, from whence $(E - ds) \cdot \overline{1} = 0$.

   On one hand \eqref{eqn-beta-2} implies (since in \eqref{eqn-beta-2} the choice of $Q \in R_q$ is arbitrary)
    \begin{equation*} \label{eqn-beta-3}
        (ds + q + n) (\mathfrak{m}^q)^e \cdot \overline{b_D(s)} \subset (\mathfrak{m}^{q+1})^e \cdot \overline{b_D(s)}.
    \end{equation*}
    So \eqref{eqn-claimedBetaFact} is true provided $\Xi_q \neq R_q$. It remains to prove that
    \begin{equation} \label{eqn-claimedbeta-3}
        \bigg[ \Xi_q = R_q \bigg] \implies \bigg[ (\mathfrak{m}^q)^e \cdot \overline{b_D(s)} \subset (\mathfrak{m}^{q+1})^e \cdot \overline{b_D(s)} \bigg].
    \end{equation}
    So suppose that $\Xi_q = R_q$. We may find a basis $\{Q_u\}$ of $R_q$ with the following property: for each $Q_u$ there exists $\delta_{u} \in \Der_R(-\log_0 f)_0$ and $\lambda_{u} \in \CC$, not equal to $-\trace(\delta_u)$, such that $\delta_u \bullet Q_u = \lambda_u Q_u$. By \eqref{eqn-beta-1}, every such basis element satisfies $Q_u \cdot \overline{b_D(s)} \in (\mathfrak{m}^{q+1})^e \cdot \overline{b_D(s)}$. Thus $R_p \cdot \overline{b_D(s)} = \{ \sum \CC \cdot Q_u \}\cdot \overline{b_D(s)} \subset (\mathfrak{m}^{q+1})^e \cdot \overline{b_D(s)}$ and \eqref{eqn-claimedbeta-3} follows.
\end{proof}

We want a reverse implication: a criterion on $f$ that forces $-n/d$ to be a root of $b_f(s)$. For this, we need an extra hypothesis.

Recall that the Weyl algebra comes equipped with a canonical exhaustive, increasing filtration, called the order filtration $P_\bullet^{\ord} \mathbb{A}_n$. The \emph{total order filtration} $F_\bullet^\sharp \mathbb{A}_n[s]$ on $\mathbb{A}_n[s]$ is defined as
\begin{equation*}
    F_k^{\sharp} \mathbb{A}_n[s] = \sum_{0 \leq t \leq k} P_t^{\ord} \mathbb{A}_n \otimes_{\mathbb{C}} \CC[s]_{\leq {k-t}},
\end{equation*}
where $\CC[s]_{\leq k-t}$ are the elements of degree at most $k-t$. This filtration is also exhaustive and increasing.

\begin{define} \label{def-diffLinear}
    We say $g \in R$ is of \emph{differential linear type} when
    \begin{equation*}
        \ann_{\mathbb{A}_n[s]} g^s = \mathbb{A}_n[s] \cdot \bigg( \ann_{\mathbb{A}_n[s]} g^s \cap F_1^\sharp \mathbb{A}_n[s]\bigg).
    \end{equation*}
\end{define}

\begin{remark} One can check that
\begin{equation*}
    \ann_{\mathbb{A}_n[s]} g^s \cap F_1^\sharp \mathbb{A}_n[s] = \{ \delta - s \frac{\delta \bullet g}{g} \mid \delta \in \Der_R(-\log g) \}.
\end{equation*}
\end{remark}

The differential linear type condition was first considered in \cite{CalderonMorenoNarvaezMacarroCompositio}. See \cite{NarvaezMacarroSurvey} for an (older) survey of adjacent topics, \cite[Theorem 3.26]{uliInventiones} for the largest class of known divisors with the hypothesis, and \cite[Corollary 2.30]{BathAnnihilationPowers} for a multi-$s$ generalization thereof. Let us demonstrate the relevance of the condition to the root-hood of $-n/d$. This following argument improves \cite[Theorem 5.13]{uliInventiones}, though the technique is similar. 

\begin{proposition} \label{prop-DiffLinTypeRootCriterion}
    Let $f \in R$ be homogeneous of degree $d$ whose associated projective divisor $D \subset \PP^{n-1}$ is not a cone. Suppose that $f$ is of differential linear type and that $\Der_R(-\log_0 f)$ contains no semi-simple non-traceless logarithmic derivations. Then
    \begin{equation*}
       - \frac{n}{d} \in \Zeroes(b_f(s)).
    \end{equation*}
\end{proposition}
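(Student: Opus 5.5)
The plan is to exhibit a nonzero $\CC[s]$-module quotient of $M_f$ on which $s$ acts as $-n/d$. Since $b_f(s)$ annihilates $M_f$, it annihilates every $\CC[s]$-module quotient of $M_f$, so it would have to vanish at $-n/d$. The natural candidate is $V = M_f / \sum_i \partial_i M_f$. Because $\CC[s]$ is central, each $\partial_i M_f$ is a $\CC[s]$-submodule, so $V$ is a $\CC[s]$-module and $b_f(s)V=0$.

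First I use differential linear type. By Remark \ref{rmk-LogDerBasics}, $\Der_R(-\log f) = R\cdot E \oplus \Der_R(-\log_0 f)$, so
\begin{equation*}
\ann_{\mathbb{A}_n[s]} f^s = \mathbb{A}_n[s]\cdot(E-ds) + \mathbb{A}_n[s]\cdot \Der_R(-\log_0 f).
\end{equation*}
Hence $M_f = \mathbb{A}_n[s]/L$, where $L$ is the left ideal generated by $E-ds$, $\Der_R(-\log_0 f)$ and $f$, and $V = \mathbb{A}_n[s]/(\sum_i \partial_i\mathbb{A}_n[s] + L)$.

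To compute $V$ I use the transposition anti-automorphism $x_i\mapsto x_i$, $\partial_i\mapsto -\partial_i$, written $P\mapsto P^*$. It identifies $\mathbb{A}_n[s]/\sum_i\partial_i\mathbb{A}_n[s]$ with $R[s]$ via $P\mapsto P^*\bullet 1$. Under this identification the image of $P\theta$ is $\theta^*\bullet(P^*\bullet 1)$. So $V\cong R[s]/W$, where $W$ is spanned over $\CC[s]$ by $fg$, $\theta^*\bullet g$ with $\theta = E-ds$, and $\delta^*\bullet g$ with $\delta\in\Der_R(-\log_0 f)$, for $g\in R[s]$. Here $\delta^* = -\sum_i \partial_i a_i = -\delta-\operatorname{div}(\delta)$, and in particular $E^* = -E-n$.

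Everything is graded, and $R[s]_0=\CC[s]$, so I look at the degree zero part $W_0$. Elements $fg$ have degree $\geq d>0$. The element $(E-ds)^*\bullet g$ contributes $-(n+ds)g$ in degree zero when $g$ is constant. For a homogeneous $\delta\in\Der_R(-\log_0 f)_k$, the operator $\delta^*$ raises degree by $k$. The non-cone hypothesis gives $\Der_R(-\log f)\subset\mathfrak m\cdot\Der_R$ (Remark \ref{rmk-LogDerBasics}), so $k\geq 0$. Thus only $k=0$ and constant $g$ reach degree zero, and they contribute $-\trace(\delta)\,g$.

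The key step is to show that every $\delta\in\Der_R(-\log_0 f)_0$ is traceless. By Lemma \ref{lem-JordanChevalleyLogDer}, $\delta_S\in\Der_R(-\log_0 f)_0$. Moreover $\trace(\delta_S)=\trace(\delta)$, because $\delta_N$ is nilpotent. If $\trace(\delta)\neq 0$, then $\delta_S$ would be a semi-simple non-traceless logarithmic derivation (nonzero since its trace is nonzero), contradicting the hypothesis.

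Hence $W_0=(ds+n)\CC[s]$, and $V$ surjects onto $V_0\cong\CC[s]/(ds+n)\neq 0$. Therefore $(s+n/d)\mid b_f(s)$. The main point needing care is the bookkeeping of the transpose: signs and the divergence term, and checking that no term of positive degree in $L$ can reach degree zero after dividing out by $\sum_i\partial_i\mathbb{A}_n[s]$. The grading argument above is intended to settle this second issue.
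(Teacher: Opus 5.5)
Your proof is correct, but it takes a somewhat different route from the paper's.

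The paper builds a surjection of left $\mathbb{A}_n[s]$-modules
$M_f \twoheadrightarrow \mathbb{A}_n[s]/(\mathbb{A}_n[s]\cdot\mathfrak{m} + \mathbb{A}_n[s]\cdot(n+ds))$. Both ideals involved are left ideals, so it only has to check that the generators $f$, $E-ds$, and a homogeneous generating set of $\Der_R(-\log_0 f)$ lie in the target ideal. It does this by rewriting each generator with the $\partial_i$ on the left:
\begin{itemize}
\item $E=\sum_i\partial_i x_i - n$;
\item $\delta=\sum_i\partial_i a_i-\trace(\delta)$ in degree $0$;
\item $\delta=\sum_i(\partial_i a_i-\partial_i\bullet a_i)\in\mathbb{A}_n[s]\cdot\mathfrak{m}$ in positive degree.
\end{itemize}
The non-cone hypothesis rules out degree $-1$. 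Nonvanishing of the target comes from $\mathbb{A}_n/\mathbb{A}_n\cdot\mathfrak{m}\neq 0$.

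You instead pass to the coinvariants $M_f/\sum_i\partial_i M_f$, which is only a $\CC[s]$-module quotient. Because you combine a right ideal with the left ideal $\ann_{\mathbb{A}_n[s]} f^s+\mathbb{A}_n[s]\cdot f$, you must control all multiples $P\theta$, not just the generators. The transposition $P\mapsto P^*$ together with the grading on $R[s]$ is exactly what handles this. The one point you leave tacit is that $W$ is a graded subspace. This holds because $f$ is homogeneous, so $\Der_R(-\log_0 f)$ is spanned by homogeneous elements, and hence $W_0$ is spanned by the degree-zero spanning elements.

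What your route buys:
\begin{itemize}
\item an explicit quotient $\CC[s]/(ds+n)$;
\item a transparent origin for the trace, as the divergence term in $\delta^*=-\delta-\operatorname{div}(\delta)$;
\item a clear account of where the non-cone hypothesis enters: it ensures no operator lowers degree-one elements onto $R[s]_0$.
\end{itemize}
You also spell out the Jordan--Chevalley reduction: $\delta_S\in\Der_R(-\log_0 f)_0$ by Lemma \ref{lem-JordanChevalleyLogDer}, and $\trace(\delta_S)=\trace(\delta)$. The paper uses this step only implicitly in its chain of equivalences.

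The paper's version avoids all grading bookkeeping by working entirely with left ideals, at the cost of separately identifying the annihilator of its target module.
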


\begin{proof}
     We claim that our hypotheses guarantee a surjection of left $\mathbb{A}_n[s]$-modules
     \begin{equation*}
         M_f = \frac{\mathbb{A}_n[s]}{\ann_{\mathbb{A}_n[s]} f^s + \mathbb{A}_n[s] \cdot f} \twoheadrightarrow L :=\frac{\mathbb{A}_n[s]}{\mathbb{A}_n[s] \cdot \mathfrak{m} + \mathbb{A}_n[s] \cdot (n + ds)}.
     \end{equation*}
     Before demonstrating this surjection, first note that $\ann_{\CC[s]} L = \CC[s] \cdot (n+ds)$ and $L \neq 0$. Indeed, inspection shows $\CC[s] \cdot (n+ds) \subset \ann_{\CC[s]} L$. Equality fails if and only if $\ann_{\CC[s]} L = \CC[s]$, in other words, if and only if $L = 0$. But, as rings, $L \simeq \mathbb{A}_n / \mathbb{A}_n \cdot \mathfrak{m} \neq 0$.

     Now we demonstrate the surjection. Since $f$ is of differential linear type, $\ann_{\mathbb{A}_n[s]} f^s = \mathbb{A}_n[s] \cdot (E - ds) + \sum \mathbb{A}_n[s] \cdot \delta$, where $E = \sum_i x_i \partial_i$ is the Euler derivation, and the second summand runs over a homogeneous generating set $\{\delta\}$ of $\Der_R(-\log_0 f)$.

     Consider $\delta = \sum_i a_i \partial_i \neq 0 $, belonging to such a generating set. Then $\delta \notin \Der_R(-\log_0 f)_{-1}$. Because if this happened, a judicious change of coordinates makes $\partial_i \bullet f = 0$, realizing $D$ as a cone. Now suppose that $\delta \in \Der_R(-\log_0 f)_0$. Using Remark \ref{rmk-traceLeftToRight} to place all the $\partial_i$'s on the left, we find, inside $\mathbb{A}_n[s]$, the equality $\delta = (\sum_i \partial_i a_i) - \trace(\delta)$. Similarly, $E = (\sum_i \partial_i x_i) - n$. Now suppose that $\delta \in \Der_R(-\log_0 f)_{\geq 1}$, so that each nonzero $a_i$ is homogeneous of degree at least two. Working in $\mathbb{A}_n[s]$ we have the equality $\delta = \sum_i (\partial_i a_i - (\partial_i \bullet a_i))$, and so $\delta \in \mathbb{A}_n[s] \cdot \mathfrak{m}$. Since $f$ is of differential linear type, putting these computations together shows that the following are equivalent:
     \begin{align*}
        \bigg[ \ann_{\mathbb{A}_n[s]} f^s &\subset \mathbb{A}_n[s] \cdot \mathfrak{m} + \mathbb{A}_n[s] \cdot (n + ds) \bigg] \\
        &\iff \bigg[ \Der_R(-\log_0 f)_0 = \bigoplus_k \CC \cdot \delta_k \text{ where each $\delta_k$ is traceless} \bigg] \\
        &\iff \bigg[ \Der_R(-\log_0 f)  \text{ contains no semi-simple non-traceless derivations} \bigg].
     \end{align*}
     (Note: we have already seen that $L \neq 0$, whence $\mathbb{A}_n[s] \cdot \mathfrak{m} + \mathbb{A}_n[s] \cdot (n + ds) \neq \mathbb{A}_n[s]$.) Since we have presumed $\Der_R(-\log_0 f)$ contains no semi-simple non-traceless logarithmic derivations, we do indeed have our surjection $M_f \twoheadrightarrow L_f$.

    This surjection basically completes the proof. Since the $\mathbb{C}[s]$-annihilator of $M_f$ sits inside the $\mathbb{C}[s]$-annihilator of any subquotients of $M_f$, we see that
     \begin{equation*}
         \mathbb{C}[s] \cdot b_f(s) = \ann_{\mathbb{C}[s]} M_f \subseteq \ann_{\mathbb{C}[s]} L = \mathbb{C}[s] \cdot (n + ds).
     \end{equation*}
\end{proof}

We have arrived at the section's main prize: a characterization of when $-n/d$ is a root of the Bernstein--Sato polynomial of $f$, provided the associated reduced projective divisor has, at worst, quasi-homogeneous isolated singularities. The point is that such $f$ are of differential linear type.

\ThmC*

\begin{remark}
    When $\deg(f_{\red}) = \deg(D_{\red}) \geq 4$, Proposition \ref{prop-degreeBound} and Remark \ref{rmk-logDerFactorization} imply that (b) is equivalent to $\Der_R(-\log_0 f)_0 = \CC \cdot \tau$, for $\tau$ a semi-simple and non-traceless logarithmic derivation.
\end{remark}


\begin{proof}
     That $(a) \implies (b)$ is standard, see Remark \ref{rmk-BSfacts}. Condition $(c)$ guarantees the existence of a semi-simple non-traceless derivation $\tau$. Recalling Proposition \ref{prop-BSMultipleBeta} and its notation, (c) implies that $\Xi_{0, \tau} = 0$ and so $-n/d \notin \Zeroes(\beta(s))$. Hence $(b)$ and $(c)$ together imply $(a)$. To complete the proof, we must confirm that $(a) \implies (c)$. If $f$ is of differential linear type, this is just Proposition \ref{prop-DiffLinTypeRootCriterion}.

    We will prove that $f$ is of differential linear type. Criteria appear in \cite[Theorem 3.26]{uliInventiones}. So (in the language of loc. cit.) it suffices to check that the affine cone $C(D) \subset \CC^n$ is Saito-holonomic, tame, and locally quasi-homogeneous. (Locally quasi-homogeneous is stronger than the condition `strongly Euler-homogeneous' appearing in loc. cit.) By Remark \ref{rmk-basicsPosWeightedHom}.\ref{item-basicsPosWeightedHom-ourCase}, $C(D)$ is locally quasi-homogeneous. Saito-holonomicity and tameness only involve the reduced structure $C(D)_{\red}$. So it suffices to prove that if $Z \subset \mathbb{P}^{n-1}$ is reduced and has (at worst) isolated singularities, then $C(Z) \subset \CC^n$ is Saito-holonomic and tame. Saito-holonomicity means the logarithmic stratification is locally finite. And $C(Z)$ has a locally finite logarithmic stratification. Indeed, its logarithmic strata are: $\{0\}$; $\{\cup_{p \in Z \setminus Z_{\reg}} C(p) \setminus 0 \}$ where $C(p) \subset \CC^n$ is the cone of $p \in \mathbb{P}^{n-1}$; the regular locus $C(Z)_{\reg}$.\footnote{Actually, any locally quasi-homogeneous divisor is automatically Saito-holonomic, see \cite[Lemma 1.6]{BathSaitoTLCT}.}
    That $C(Z)$ is tame is \cite[Corollary 1.4]{BathSaitoTLCT}.
\end{proof}

\begin{remark} \label{rmk-computingBSofIsolatedProjective}
    If $g \in R$ is reduced, positively weighted homogeneous, and has an isolated singularity, then there is well known formula for $b_g(s)$ entirely in terms of the degree sequence of the Milnor algebra $R / (g, \partial_1 \bullet g, \dots, \partial_n \bullet g)$. So for reduced $f$ and $D$ as in Theorem \ref{thm-quasiHomIsolatedBSRootCharacterization}, computation of $\Zeroes(b_D(s))$ is `routine.'
\end{remark}

\section{Semi-simple symmetric projective plane curves and their Bernstein--Sato polynomials.}

Throughout, $f \in R = \CC[x_1, \dots, x_n]$ is homogeneous of degree $d$ and $D \subset \PP^{n-1}$ its attached projective divisor. We have seen whether or not $-n/d$ is a root of the Bernstein--Sato polynomial of $f$ is related to the existence of semi-simple traceless derivations in $\Der_R(-\log_0 f)_0$. In particular, when $D_{\red}$ has (at worst) quasi-homogeneous isolated singularities, such derivations along with $b_D(s)$ characterize when $-n/d \in \Zeroes(b_{f(s)})$.

Our goal here is to give \emph{explicit} defining equations for all $f \in \mathbb{C}[x,y,z]$ such that: $C(D)$ is not a cone; $D_{\red} \subset \PP^{2}$ has (at worst) quasi-homogeneous (necessarily isolated) singularities; $-3/d$ is not a root of $b_f(s)$.

Our enumeration builds on, and then significantly improves, work of du Plessis and Wall \cite{PlessisWall3Dim}. They find a six families classification, but they do not find a way to `finitely' enumerate all family members. We enumerate all semi-simple symmetric plane curves in terms of (up to homogeneous coordinate change) a \emph{single} defining equation. Then we identify which family members do not have $-3/d$ as a root of $b_f(s)$ in Theorem \ref{thm-EnumeratingEqnswithBadBSPoly}.

\subsection{Enumeration of semi-simple $1$-symmetric projective plane curves}

Within this subsection (and only here) we reserve $f_d \in \CC[x,y,z]$ for a reduced homogeneous polynomial of degree $\deg(f_d) = d$.

Start by denoting the monomial support of $f_d$ by $\supp(f_d) \subset \mathbb{Z}_{\geq 0}^3$. Homogeneity places $\supp(f_d)$ inside the hyperplane $x + y + z = d$, making every monomial in $\supp(f_d)$ of the form $x^{t_1}y^{t_2}z^{d-(t_1 + t_2)}.$ Define
\begin{equation*}
    \phi_d : \mathbb{Z}_{\geq 0}^3 \to \mathbb{Z}^2 \quad \text{where} \quad \phi_d (a, b, c) = (-b, d - a).
\end{equation*}
We use coordinates $(u, v)$ for $\mathbb{Z}^2$. Consider the triangle
\begin{equation*}
    T_d = \{(u,v) \in \mathbb{Z}^2 \mid -d \leq u \leq 0, \, 0 \leq v \leq d, \, 0 \leq u + v\}.
\end{equation*}
It lies within the second quadrant, with hypotenuse connecting $(-d,d)$ and $(0,0)$. And $\phi_d$ maps $\supp(f_d)$ bijectively into $T_d$:
\begin{equation*}
    \phi_d(t_1, t_2, d-t_1-t_2) = (-t_2, d-t_1) \in T_d.
\end{equation*}
As subsequent arguments involve manipulating $T_d$, we have drawn $T_d$, along with some extra data, in Figure \ref{fig:lattice}. 

\begin{figure}[h]
\includegraphics[scale=0.35]{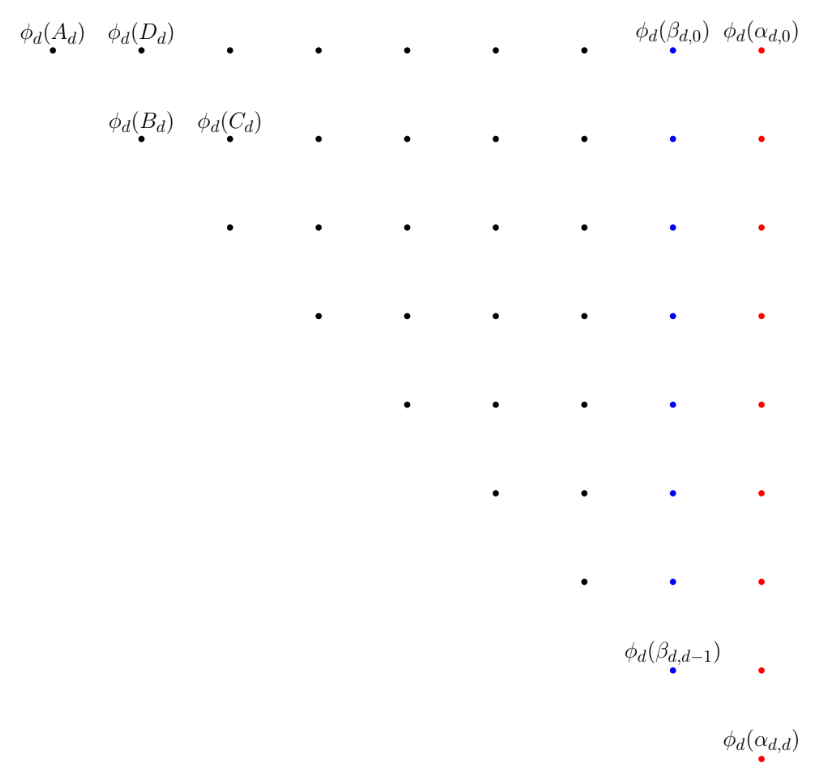}
\caption{The integral points of the triangular region $T_d$. The upper left corner is $(-d,d)$, the upper right $(0,d)$, the bottom right the origin. Many yet-to-be named integral points are drawn. The points $\phi_d(A_d), \phi_d(B_d), \phi_d(C_d),$ and $\phi_d(D_d)$ are labeled in the top left. The points $\{\phi_d(\alpha_{d,r})\}_r$ are the integral points on the line $u = 0$ and appear red. The points $\{\phi_d(\beta_{d,r})\}_r$ are the integral points on the line $u = -1$ and appear blue.}
\label{fig:lattice}
\end{figure}

\begin{proposition} \label{prop-diagDerLineSement}
    In the language of the above the following are equivalent:
    \begin{enumerate}[label=(\alph*)]
        \item there exists a diagonal $\delta \in \Der_R(-\log_0 f_d)_0$;
        \item $\phi_d (\supp(f_d))$ is contained in a line.
    \end{enumerate}
\end{proposition}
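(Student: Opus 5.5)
The plan is to translate both conditions into linear algebra on exponent vectors. A diagonal derivation $\delta = w_1 x\partial_x + w_2 y\partial_y + w_3 z\partial_z$ acts on a monomial by
\begin{equation*}
\delta \bullet x^{t_1}y^{t_2}z^{t_3} = (w_1t_1 + w_2t_2 + w_3t_3)\, x^{t_1}y^{t_2}z^{t_3}.
\end{equation*}
So $\delta \bullet f_d = 0$ holds exactly when the linear form $\ell_{\bm w}(t) = w_1t_1+w_2t_2+w_3t_3$ vanishes on every point of $\supp(f_d)$. Thus (a) is equivalent to: there is $\bm w \neq 0$ with $\ell_{\bm w}$ vanishing on $\supp(f_d)$. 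Here I note that $\delta \neq 0$ is implicitly required (the paper's convention that selected derivations are nonzero), and $\delta$ is diagonal so $\delta = \delta_0$ automatically.

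Next I use homogeneity. Every $t \in \supp(f_d)$ lies on the plane $t_1+t_2+t_3 = d$, which does not contain the origin. The key step is the elementary fact that a set $S$ of points on an affine plane $H \subset \CC^3$ with $0 \notin H$ lies on a line of $H$ if and only if $S$ is contained in a $2$-dimensional linear subspace. Equivalently, this holds if and only if the linear span of $S$ has dimension at most $2$, that is, if and only if some nonzero $\bm w$ has $\ell_{\bm w}|_S = 0$. Indeed, a line $L\subset H$ spans a plane through $0$, and conversely a $2$-plane through $0$ meets $H$ in a line or is parallel to $H$; the parallel case cannot contain points of $H$. I will also treat the degenerate cases $|S| \le 2$, where both conditions trivially hold.

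Finally, I transport this through $\phi_d$. Restricted to the plane $t_1+t_2+t_3=d$, the map $(t_1,t_2,t_3)\mapsto(-t_2,d-t_1)$ is the restriction of an affine isomorphism, since $(t_1,t_2)$ are affine coordinates on that plane. Affine isomorphisms carry lines to lines, so $\supp(f_d)$ lies on a line in the plane iff $\phi_d(\supp(f_d))$ lies on a line in $\mathbb{Z}^2\subset\mathbb{R}^2$.

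Concretely, for (b)$\Rightarrow$(a): if $\phi_d(\supp f_d)$ lies on $\alpha u+\beta v=\gamma$, I pull this back to $-\alpha t_2 + \beta(d-t_1) = \gamma$. I then homogenize it using $d = t_1+t_2+t_3$, which gives $\ell_{\bm w}$ with $\bm w = (-\gamma/d,\ \beta-\alpha-\gamma/d,\ \beta-\gamma/d)$, nonzero since $(\alpha,\beta)\neq 0$. For (a)$\Rightarrow$(b), I reverse this: given $\bm w$, I eliminate $t_3$ to get an affine relation in $(t_1,t_2)$. That relation is nontrivial because $\bm w$ is not a multiple of $(1,1,1)$; otherwise $\ell_{\bm w}=w_1 d\neq 0$ on the support. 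The only mild obstacle is bookkeeping the nondegeneracy conditions in both directions, in particular ruling out $\bm w \propto (1,1,1)$; no earlier results beyond the monomial action computation above are needed.
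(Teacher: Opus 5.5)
Your proof is correct and follows the same route as the paper. Both compute that a diagonal derivation acts on $x^{t_1}y^{t_2}z^{t_3}$ by the scalar $w_1t_1+w_2t_2+w_3t_3$ and rewrite that relation in the coordinates $(u,v)=(-t_2,d-t_1)$ for (a)$\Rightarrow$(b); conversely, both pull back the line and homogenize using $d=t_1+t_2+t_3$, which is the paper's subtraction of a multiple of $E$. Your uniform treatment of an arbitrary line $\alpha u+\beta v=\gamma$, together with the explicit exclusion of $\bm w\propto(1,1,1)$, is in fact slightly cleaner than the paper's split into the cases $v=mu+b$ and $u=0$.
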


\begin{proof}
    Write the diagonal $\delta$ as $w_1 x \partial_x + w_2 y \partial_y + w_3 z \partial_z$. Then any monomial $x^{t_1}y^{t_2} z^{d - (t_1 + t_2)}$ in $\supp(f_d)$ satisfies $w_1 t_1 + w_2 t_2 + w_3 (d-t_1 - t_2) = 0$, which is to say that $\phi_d(t_1, t_2, d - (t_1 - t_2)) = (-t_2, d - t_1)$ lies on the line $w_1 v + w_2 u = w_1 d + w_3 (v + u)$.

    Conversely, if $\phi_d(\supp(f_d))$ lies in the line $v = mu + b$, then every monomial $x^{t_1} y^{t_2} z^{d-t_1-t_2}$ in $\supp(f)$ satisfies $d - t_1 = -m t_2 + b$. So the diagonal derivation $(x \partial_x - m y \partial_y) - (d-b)E$ belongs to $\Der_R(-\log_0 f_d)_0$. Similarly, if $\phi_d(\supp(f_d))$ lies in the line $u = 0$,  then $\supp(f_d) \subseteq \{x^{t_1}y^0z^{d-t_1}\}_{0 \leq t_1 \leq d}$ and so $(d-t_1)x \partial_x - t_1 z \partial_z$ kills $f_d$.
\end{proof}

If $f_d$ defines a $1$-symmetric semi-simple $Z \subset \PP^2$, then after a homogeneous coordinate change we may assume that $f_d$ admits a
nonzero diagonal derivation $\delta \in \Der_R(-\log_0 f_d)$. Fix these coordinates. By the previous proposition, $\phi_d(\supp(f_d))$ lies on a line in $T_d$. Let $L$ denote the minimal line segment containing $\phi_d(\supp (f_d))$. We assume $L$ is not a point, since when it is $f_d$ is a monomial. 

For each edge of the triangle $T_d$, there exists an endpoint of $L$ of distance $\leq 1$ to that edge. For example, consider the hypotenuse $\{u + v = 0\}$. A monomial $x^a y^b z^c$ lands in the region $u + v \geq 2$ exactly when $c \geq 2$. So
\begin{equation*}
    \bigg[ \phi_d(\supp(f_d)) \subset \bigg( \{u + v \geq 2\}\ \cap T_d \bigg) \bigg] \iff \bigg[ z^2 \text{ divides } f_d \bigg].
\end{equation*}
Since $f_d$ is reduced, one endpoint of $L$ must lie inside $u + v \leq 1$. Checking proximity of $L$ to the regions $\{u \leq 1\}$ and $\{v \geq d-1\}$ is similar. By the pigeon-hole principle, one endpoint of $L$ has distance $\leq 1$ to two edges of $T_d$. If these two edges define the upper leftmost corner of $T_d$, then the corresponding endpoint of $L$ arises from one of the following:
\begin{enumerate}[label=(\alph*)]
    \item $A_d = (0, d, 0) \xmapsto{\phi_d} (-d, d)$;
    \item $B_d = (1, d-1, 0) \xmapsto{\phi_d} (-(d-1), d-1)$
    \item $C_d = (1, d-2, 1) \xmapsto{\phi_d} (-(d-2), d-1) $
    \item $D_d = (0, d-1, 1) \xmapsto{\phi_d}(-(d-1), d)$.
\end{enumerate}
If the two edges define a different corner of $T_d$, then after a permutation of $\{x,y,z\}$ we can assume the two edges do in fact define the upper leftmost corner of $T_d$. Note that this process preserves the property of $f_d$ having a diagonal derivation $\delta \in \Der_R(-\log_0 f_d)$. So we will presume we have made this coordinate change.

The other endpoint of $L$ must be have distance $\leq 1$ to the edge $\{u = 0\}$ of $T_d$. We can take this point to come from one of the following applications of $\phi_d$:
\begin{enumerate}[label=(\alph*)]
    \item $\alpha_{d,r} = (r, 0, d-r) \xmapsto{\phi_d} (0, d-r)$ for $0 \leq r \leq d$;
    \item $\beta_{d,r} = (r, 1, d-r-1) \xmapsto{\phi_d} (-1, d-r)$ for $0 \leq r \leq d-1.$
\end{enumerate}

\begin{define} \label{def-lineSegmentNotation}
    Let $p \in \{A_d, B_d, C_d, D_d\}$ and $q \in \{\alpha_{d,r}, \beta_{d,r}\}$ with the legal choices of $r$ described above. Assume $p \neq q$. We slightly abuse notation and denote by $\overline{pq}$ the line segment in $T_d$ connecting $\phi_d(p)$ and $\phi_d(q)$. We use $h_{\overline{pq}}$ to denote a reduced homogeneous polynomial of degree $d$ such that both $\phi_d(\supp(h_{\overline{pq}})) \subset \overline{pq}$ and $p, q \in \phi_d(\supp(h_{\overline{pq}})).$
\end{define}

From the analysis above, we have learned that if $Z \subset \PP^2$ is semi-simple $1$-symmetric, then the cone of $C(Z)$ admits a defining equation that is a monomial (this happens when $L$ is a point) or a defining equation $h_{\overline{pq}}$, where $p \in \{A_d, B_d, C_d, D_d\}$ and $q \in \{\alpha_r, \beta_r\}$. 
 (For $\alpha_{d,r}$ we must have $0 \leq r \leq d$; for $\beta_{d,r}$ we must have $0 \leq r \leq d-1$.) Thus we have given eight families encoding all such $Z$. This is essentially a repeat of the six family enumeration in \cite{PlessisWall3Dim}, though they pass from eight families to six by removing some trivial redundancies. (And at this point their classification into six families is fine enough to sort by Euler characteristic, see loc. cit.; we revisit this classification later.)

Now we extend past \cite{PlessisWall3Dim}. We need the following lemmas.

\begin{lemma} \label{lem-StandardForm-1}
    We have the following factorization of polynomials:
    \begin{itemize}
        \item $h_{\overline{A_d, \beta_{d,r}}} = y h_{\overline{A_{d-1}, \alpha_{d-1,r}}}$;
        \item $h_{\overline{B_d, \beta_{d,r}}} = y h_{\overline{B_{d-1}, \alpha_{d-1,r}}}$;
        \item $h_{\overline{C_d, \beta_{d,r}}} = y h_{\overline{C_{d-1}, \alpha_{d-1,r}}}$;
        \item $h_{\overline{D_d, \beta_{d,r}}} = y h_{\overline{D_{d-1}, \alpha_{d-1,r}}}$.
    \end{itemize}
\end{lemma}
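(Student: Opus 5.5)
The plan is to show that $y$ divides every monomial in $\supp(h_{\overline{p\beta_{d,r}}})$ for each $p \in \{A_d,B_d,C_d,D_d\}$. Then $h_{\overline{p\beta_{d,r}}}/y$ is a homogeneous polynomial of degree $d-1$, and the remaining step is to recognise its support as lying on the segment $\overline{p'\alpha_{d-1,r}}$ in $T_{d-1}$, where $p'$ is the corresponding corner point of degree $d-1$.

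For the first step I would compute the images explicitly: $\phi_d(\beta_{d,r}) = (-1,d-r)$, and $\phi_d(p)$ has $u$-coordinate $-d$, $-(d-1)$, $-(d-2)$ or $-(d-1)$ for $p=A_d,B_d,C_d,D_d$. The segment $\overline{p\beta_{d,r}}$ therefore lies in the half-plane $u \leq -1$. For $C_d$ with $d=2$ the point $\phi_d(C_d)$ would itself be at $u=0$, but then $C_d$ has a negative exponent, so that case does not occur; small degenerate $d$ can be checked directly. Every monomial $x^{t_1}y^{t_2}z^{t_3}$ in the support maps to $u=-t_2\leq -1$, so $t_2\geq 1$, and hence $y\mid h_{\overline{p\beta_{d,r}}}$.

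For the second step, dividing by $y$ sends $(t_1,t_2,t_3)\mapsto(t_1,t_2-1,t_3)$. I would check that $\phi_{d-1}(t_1,t_2-1,t_3)=(-(t_2-1),(d-1)-t_1)=\phi_d(t_1,t_2,t_3)+(1,-1)$, so on supports the operation is a translation of the plane. Translations preserve collinearity and carry segments to segments. I would then verify endpoint by endpoint that $A_d/y=(0,d-1,0)=A_{d-1}$, $B_d/y=B_{d-1}$, $C_d/y=(1,d-3,1)=C_{d-1}$, $D_d/y=D_{d-1}$ and $\beta_{d,r}/y=(r,0,d-1-r)=\alpha_{d-1,r}$. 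So $\phi_{d-1}(\supp(h/y))\subset\overline{p'\alpha_{d-1,r}}$, and both endpoints occur in the support.

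Reducedness of $h/y$ is automatic because it divides the reduced $h$. The statement should be read as an equality of families: every $h_{\overline{p\beta_{d,r}}}$ has the form $y\cdot h_{\overline{p'\alpha_{d-1,r}}}$. For the converse I would note that if $g=h_{\overline{p'\alpha_{d-1,r}}}$, then $yg$ has the right support, and that it is reduced if $y\nmid g$. This holds because $\alpha_{d-1,r}$ lies in $\supp(g)$ and has $y$-exponent $0$. The only real obstacle is this bookkeeping of the degenerate small-$d$ and $r$ edge cases, together with the interpretation of the equality.
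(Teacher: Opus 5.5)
Your proof is correct, and it takes a more elementary route than the paper. After noting that $y$ divides $h_{\overline{p,\beta_{d,r}}}$, the paper argues as follows:
\begin{enumerate}
\item the factor $\frac{1}{y}h_{\overline{p,\beta_{d,r}}}$ again admits a diagonal annihilating derivation (Remark~\ref{rmk-LogDerBasics}\ref{rmk-item-LogDerDiagonalFactors});
\item Proposition~\ref{prop-diagDerLineSement} then places $\phi_{d-1}$ of its support on a line in $T_{d-1}$;
\item the endpoints are identified ``by inspection''.
\end{enumerate}
Your identity $\phi_{d-1}(t_1,t_2-1,t_3)=\phi_d(t_1,t_2,t_3)+(1,-1)$ makes the derivation machinery unnecessary. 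Division by $y$ is a translation of the $(u,v)$-plane, so the segment and both endpoints are transported in one step. This also makes explicit what the paper's inspection step tacitly uses: knowing only that the support lies on \emph{some} line and contains $A_{d-1}$ and $\alpha_{d-1,r}$ does not by itself make these the endpoints. The same computation covers the divisions in Lemma~\ref{lem-StandardForm-3}, since division by $x$ is the identity on the $(u,v)$-plane and division by $z$ is translation by $(0,-1)$. Your converse remark, that $yg$ is reduced because $y\nmid g$, is a small addition the paper does not state.

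One correction to your edge-case remark. For $d=2$, the vector $C_2=(1,0,1)$ is a genuine exponent vector. For instance $h_{\overline{C_2,\beta_{2,1}}}=x(c_1z+c_2y)$ exists and is not divisible by $y$, so your first step really does fail there. The case is excluded not because $C_2$ is illegal, but because the right-hand side would involve $C_1=(1,-1,1)$, which has a negative entry. So the $C$-bullet should be read for $d\geq 3$. Likewise the $B$- and $D$-bullets should be read for $d\geq 2$, since $B_0$ and $D_0$ also have a negative entry. The paper's proof passes over this silently as well, so this is a matter of stating the range correctly, not a gap in your method.
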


\begin{proof}
    Let $p \in \{A_d, B_d, C_d, D_d\}$. In all cases $y$ divides $h_{\overline{p, \beta_{d,r}}}$. Because $h_{\overline{p, \beta_{d,r}}}$ has a diagonal $\delta \in \Der_R(-\log_0 h_{\overline{p, \beta_{d,r}}})_0$, so does any factor of $h_{\overline{p, \beta_{d,r}}}$, see Remark \ref{rmk-LogDerBasics}.\ref{rmk-item-LogDerDiagonalFactors}.
By Proposition \ref{prop-diagDerLineSement}, $\phi_{d-1}(\supp (\frac{1}{y} h_{\overline{p, \beta_{d,r}}})$ is contained in a line segment inside $T_{d-1}.$ Inspecting all cases leads to the result. For example, $A_{d-1}, \alpha_{d-1, r} \in \supp(\frac{1}{y} h_{\overline{A_d, \beta_{d,r}}})$. Applying $\phi_{d-1}$ to these points must give the endpoints of the line segment containing $\phi_{d-1}(\supp(\frac{1}{y} h_{\overline{A_d, \beta_{d,r}}}))$.
\end{proof}

\begin{lemma} \label{lem-StandardForm-2}
    We have the following equalities, where $c_1, c_2 \in \CC^\star$:
    \begin{itemize}
        \item $h_{\overline{B_d, \alpha_{d,0}}} = c_1 x y^{d-1} + c_2z^d$;
        \item $h_{\overline{C_d, \alpha_{d,d}}} = x(c_1 y^{d-2}z + c_2x^{d-1})$;
        \item $h_{\overline{D_d, \alpha_{d, d}}} = c_1 y^{d-1}z + c_2 x^d.$
    \end{itemize}
\end{lemma}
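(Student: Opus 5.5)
The three equalities follow from one computation: find which lattice points of $T_d$ lie on the segment $\overline{pq}$. By Definition \ref{def-lineSegmentNotation}, $h_{\overline{pq}}$ is a reduced homogeneous polynomial of degree $d$ whose support maps into $\overline{pq}$ and contains both endpoints. So in each case I will determine all integral points of the segment joining $\phi_d(p)$ and $\phi_d(q)$. If the only integral points are the two endpoints, then $h_{\overline{pq}}$ is a sum of exactly the two corresponding monomials with nonzero coefficients, which gives the claimed form. Recall $\phi_d(t_1,t_2,t_3) = (-t_2, d-t_1)$, and assume $d \geq 2$ (or $d \geq 3$ where needed so the points are distinct and legal).

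For $\overline{B_d \alpha_{d,0}}$ the endpoints are $(-(d-1), d-1)$ and $(0,d)$. The direction vector is $(d-1, 1)$, which is primitive, so there are no interior lattice points. The monomials are $xy^{d-1}$ and $z^d$, giving $c_1xy^{d-1} + c_2 z^d$.

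For $\overline{D_d \alpha_{d,d}}$ the endpoints are $(-(d-1), d)$ and $(0,0)$. The direction vector is $(d-1, -d)$, and $\gcd(d-1,d)=1$, so again there are only two lattice points. The monomials are $y^{d-1}z$ and $x^d$.

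For $\overline{C_d \alpha_{d,d}}$ the endpoints are $(-(d-2), d-1)$ and $(0,0)$. The direction vector is $(d-2, -(d-1))$, whose entries are coprime. The monomials are $xy^{d-2}z$ and $x^d$, so $h = c_1xy^{d-2}z + c_2x^d = x(c_1y^{d-2}z + c_2x^{d-1})$.

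One check remains: a preimage of a point of $T_d$ under $\phi_d$ restricted to the hyperplane $t_1+t_2+t_3=d$ is unique, since $\phi_d$ is a bijection onto $T_d$. Hence each lattice point corresponds to exactly one monomial and no hidden monomials can occur.

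The only real content is this coprimality observation; everything else is bookkeeping. One would also want to confirm that the resulting polynomials are reduced, so that these cases really arise. Reducedness is clear because each of $c_1xy^{d-1}+c_2z^d$, $c_1y^{d-1}z+c_2x^d$ and $c_1y^{d-2}z + c_2x^{d-1}$ is a binomial in which some variable appears to the first power, which makes it squarefree. In the last case one must also check that $x$ does not divide the second factor, which holds since $c_1 \neq 0$. Reducedness is not needed for the stated equalities themselves.
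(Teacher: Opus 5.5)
Your proof is correct and follows the paper's argument: in each case the segment's direction vector is primitive ($\gcd(d-1,1)=\gcd(d-1,d)=\gcd(d-2,d-1)=1$), so the only lattice points are the two endpoints and $h_{\overline{pq}}$ is the corresponding binomial. Your side remark on reducedness is not needed for the equalities, and its justification is slightly too quick: a variable appearing to the first power does not by itself force squarefreeness (e.g.\ $y^2x+y^2z$), though the conclusion holds here because the two monomials in each binomial are coprime.
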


\begin{proof}
    In all cases, the relevant line segment in $T_d$ contains exactly two points.
\end{proof}

\begin{lemma} \label{lem-StandardForm-3}
    We have the following factorization of polynomials:
    \begin{itemize}
        \item if $r > 0$ then $h_{\overline{B_d \alpha_{d,r}}} = x h_{\overline{A_{d-1}, \alpha_{d-1, r-1}}};$
        \item if $r < d$, then $h_{\overline{C_d \alpha_{d,r}}} = z h_{\overline{B_{d-1}, \alpha_{d-1, r}}}$;
        \item if $r < d$, then $h_{\overline{D_d \alpha_{d,r}}} = x h_{\overline{A_{d-1}, \alpha_{d-1, r}}}$.
    \end{itemize}
\end{lemma}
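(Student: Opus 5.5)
The plan mirrors the proof of Lemma \ref{lem-StandardForm-1}: in each case exhibit a coordinate variable dividing $h$, then identify the support of the quotient inside $T_{d-1}$ via Proposition \ref{prop-diagDerLineSement}.

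\textbf{Divisibility.} First I will show the relevant variable divides $h$ by locating the segment inside $T_d$.

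For $\overline{B_d\alpha_{d,r}}$ with $r>0$, the endpoints are $\phi_d(B_d)=(-(d-1),d-1)$ and $\phi_d(\alpha_{d,r})=(0,d-r)$, both lying in $\{v\le d-1\}$. Hence every point of the segment has $v\le d-1$. Since $v=d-t_1$, this means $t_1\ge 1$ for every monomial, so $x$ divides $h$.

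For $\overline{C_d\alpha_{d,r}}$ with $r<d$, the endpoints $(-(d-2),d-1)$ and $(0,d-r)$ both satisfy $u+v\ge 1$. Since $u+v=d-t_1-t_2$ is the $z$-exponent, $z$ divides $h$.

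For $\overline{D_d\alpha_{d,r}}$ with $r<d$, the endpoints $(-(d-1),d)$ and $(0,d-r)$ again satisfy $u+v\ge 1$, so $z$ divides $h$. The statement, however, has $x$ there. One of us has mislabeled something, so I will recheck the definitions of $D_d=(0,d-1,1)$ and the statement carefully. I expect the correct factor to be $z$, with quotient $h_{\overline{A_{d-1},\alpha_{d-1,r}}}$, since $D_d/z=(0,d-1,0)=A_{d-1}$ and $\alpha_{d,r}/z=(r,0,d-1-r)=\alpha_{d-1,r}$. I will flag this as a likely typo.

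\textbf{Identifying the quotient.} Next I track the endpoints after dividing by the variable. Dividing a monomial by $x$ sends $(a,b,c)\mapsto(a-1,b,c)$, and a direct computation shows $\phi_{d-1}(a-1,b,c)=(-b,d-a)=\phi_d(a,b,c)$. So the picture is literally unchanged, merely viewed inside $T_{d-1}$. Under this shift, $B_d\mapsto(0,d-1,0)=A_{d-1}$ and $\alpha_{d,r}\mapsto\alpha_{d-1,r-1}$.

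Dividing by $z$ gives $\phi_{d-1}(a,b,c-1)=(-b,d-1-a)$, a downward shift by one. Under this shift, $C_d\mapsto(1,d-2,0)=B_{d-1}$ and $\alpha_{d,r}\mapsto\alpha_{d-1,r}$.

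\textbf{Hypotheses of Definition \ref{def-lineSegmentNotation}.} Finally I check that each quotient qualifies as the corresponding $h$. The quotient is reduced because $h$ is. Its support maps into the image segment, and both endpoints are attained, since they were attained for $h$ and division is a bijection on supports. As noted before Lemma \ref{lem-StandardForm-1}, Remark \ref{rmk-LogDerBasics} guarantees the quotient still admits a diagonal derivation, though that is automatic here.

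The only genuine obstacle is the bookkeeping of the coordinate shifts and the suspected $x$ versus $z$ discrepancy in the third bullet.
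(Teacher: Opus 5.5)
Your proof is correct, and you are right about the third bullet. $D_d=(0,d-1,1)$ is the monomial $y^{d-1}z$, which $x$ does not divide, so the printed statement is false there. It should read $h_{\overline{D_d\alpha_{d,r}}}=z\,h_{\overline{A_{d-1},\alpha_{d-1,r}}}$. The typo is harmless downstream: the proof of Proposition \ref{prop-SymmetricReduced-StandardForm-Better} only needs the shape $x^{e_1}y^{e_2}z^{e_3}h_{\overline{A_{d-e},\alpha_{d-e,r'}}}$.

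Your divisibility step is exactly the paper's ``inspect the segment in $T_d$''. The quotient step takes a slightly different route. The paper defers to the argument of Lemma \ref{lem-StandardForm-1}: a factor of $h$ inherits a diagonal annihilating derivation (Remark \ref{rmk-LogDerBasics}), so by Proposition \ref{prop-diagDerLineSement} its $\phi_{d-1}$-support is collinear, and the endpoints are then read off by inspection. You bypass derivations entirely. You observe that dividing by $x$ leaves the $\phi$-picture unchanged and dividing by $z$ translates it down by one. Hence the segment, its endpoints, and the membership of the two endpoint monomials in the support carry over verbatim.

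This is more elementary, and it actually justifies the claim that $\phi_{d-1}$ of the shifted points are the endpoints, which the paper leaves to inspection. The derivation-based route gains nothing extra here, since collinearity is preserved by translation. The same argument would also handle Lemma \ref{lem-StandardForm-1}, where dividing by $y$ translates the picture by $(1,-1)$.
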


\begin{proof}
    The divisibility statements follow by inspecting the appropriate line segment in $T_d$. The rest of the argument is entirely similar to Lemma \ref{lem-StandardForm-1}.
\end{proof}

\begin{lemma} \label{lem-decomposableArrangement}
The polynomial  $h_{\overline{A_d, \alpha_{d, 0}}}$ defines a hyperplane arrangement in $\mathbb{C}[y,z]$ and $h_{\overline{A_d, \alpha_{d, d}}}$ defines a hyperplane arrangement in $\mathbb{C}[x,y]$.
\end{lemma}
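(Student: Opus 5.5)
The plan is to identify the line segment $\overline{A_d\alpha_{d,0}}$ (resp. $\overline{A_d\alpha_{d,d}}$) inside $T_d$ and read off which monomials can occur in the support.

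\emph{First case.} We have $\phi_d(A_d) = (-d,d)$ and $\phi_d(\alpha_{d,0}) = \phi_d(0,0,d) = (0,d)$. So the segment is the top edge $\{v = d\}$ of $T_d$. A point $(u,v)=\phi_d(a,b,c)=(-b,d-a)$ lies on $v=d$ exactly when $a=0$. Hence every monomial in $\supp(h_{\overline{A_d,\alpha_{d,0}}})$ is free of $x$, and $h:=h_{\overline{A_d,\alpha_{d,0}}}\in\CC[y,z]$. It is homogeneous of degree $d$ in two variables. Over $\CC$ it therefore factors into linear forms $\prod_i(\lambda_i y+\mu_i z)$, which are pairwise non-proportional because $h$ is reduced. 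Viewed in $\CC[y,z]$ (or as a polynomial in three variables), it thus cuts out a union of distinct hyperplanes, i.e. a hyperplane arrangement.

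\emph{Second case.} We have $\phi_d(\alpha_{d,d})=\phi_d(d,0,0)=(0,0)$, so the segment joins $(-d,d)$ to the origin along the hypotenuse $\{u+v=0\}$. The condition $u+v=0$ means $d-a-b=0$, i.e. $c=0$. So no monomial involves $z$, and $h_{\overline{A_d,\alpha_{d,d}}}\in\CC[x,y]$. The same argument, factoring a reduced binary form into distinct linear factors, finishes the proof.

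There is no genuine obstacle here. The only points needing care are the computation of $\phi_d$ at the two endpoints and the observation that each segment is an entire edge of $T_d$, so membership in it is a single linear condition on exponents. One also uses that reducedness gives distinct linear factors, which is automatic since $h_{\overline{pq}}$ is reduced by Definition \ref{def-lineSegmentNotation}.
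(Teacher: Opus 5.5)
Your proposal is correct and follows the paper's proof. Both arguments identify $\overline{A_d\alpha_{d,0}}$ and $\overline{A_d\alpha_{d,d}}$ as the edges $\{v=d\}$ and $\{u+v=0\}$ of $T_d$. Both then read off that the support lies in $\{y^tz^{d-t}\}$, resp.\ $\{x^{d-t}y^t\}$, and conclude by factoring a reduced binary form into distinct linear factors, a step the paper compresses into the phrase \emph{using homogeneity}.
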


\begin{proof}
    Using homogeneity, to demonstrate the first claim it suffices to show that $h_{\overline{A_d, \alpha_{d, 0}}} \in \mathbb{C}[y,z]$. The integral points of the line segment $\overline{A_d, \alpha_{d,0}}$ are $\{(-t, d) \mid 0 \leq t \leq d\}$. So $\supp(h_{\overline{A_d, \alpha_{d, 0}}}) \subset \{y^t z^{d-t} \mid 0 \leq t \leq d\}$. The second claim is entirely similar: the integral points of the line segment $\overline{A_d, \alpha_{d,0}}$ are $\{(-t,t) \mid 0 \leq t \leq d\}$ and so $\supp(h_{\overline{A_d, \alpha_{d, d}}}) \subset \{x^{d-t} y^{t} \mid 0 \leq t \leq d\}$.
\end{proof}

\begin{lemma} \label{lem-StandardForm-Aalpha-better}
    Consider a polynomial of the form $h_{\overline{A_d, \alpha_{d,r}}}$ with $1 \leq r \leq d-1$. Let $m$ be the greatest common divisor of $d$ and $r$.
Then there exists $\lambda \in \CC^\star$ such that we have a factorization into irreducible polynomials
    \begin{equation*}
        h_{\overline{A_d, \alpha_{d,r}}} = \prod_{1 \leq q \leq m} h_{\overline{A_{\frac{d}{m}}, \alpha_{\frac{d}{m}, \frac{r}{m}}}} = \lambda \cdot \prod_{1 \leq q \leq m} \left( y^{\frac{d}{m}} + c_q x^{\frac{r}{m}}z^{\frac{d-r}{m}} \right),
    \end{equation*}
    where $c_q \in \CC^\star$ are pairwise distinct.
    Moreover, $h_{\overline{A_d, \alpha_{d,r}}}$ and each of its irreducible factors $h_{\overline{A_{\frac{d}{m}}, \alpha_{\frac{d}{m}, \frac{r}{m}}}}$ has a semi-simple annihilating logarithmic derivation $\frac{r-d}{m} x \partial_x + \frac{r}{m} z \partial_z$.
\end{lemma}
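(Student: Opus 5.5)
The plan is to first pin down the monomial support of $h = h_{\overline{A_d, \alpha_{d,r}}}$, then reduce to a binary form and factor it. The segment $\overline{A_d\alpha_{d,r}}$ runs from $\phi_d(A_d) = (-d,d)$ to $\phi_d(\alpha_{d,r}) = (0,d-r)$. Its integral points are $(-d + k\frac{d}{m}, d - k\frac{r}{m})$ for $0 \le k \le m$, where $m = \gcd(d,r)$, because the primitive direction vector is $(d/m, -r/m)$. Pulling back by $\phi_d$, the point with index $k$ corresponds to the monomial
\[
x^{k r/m}\, y^{(m-k) d/m}\, z^{k(d-r)/m} = \big(y^{d/m}\big)^{m-k}\big(x^{r/m} z^{(d-r)/m}\big)^{k}.
\]
Setting $Y = y^{d/m}$ and $W = x^{r/m}z^{(d-r)/m}$, this shows $h = G(Y,W)$ for a binary form $G$ of degree $m$. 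Since $A_d$ and $\alpha_{d,r}$ both lie in the support, the coefficients of $Y^m$ and $W^m$ are nonzero. Over $\CC$ we can therefore factor $G = \lambda \prod_{q=1}^m (Y + c_q W)$ with $\lambda, c_q \in \CC^\star$. Because $h$ is reduced, the $c_q$ must be pairwise distinct: a repeated factor of $G$ would give a repeated factor of $h$.

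Next I will show each factor $g_q = y^{d/m} + c_q x^{r/m} z^{(d-r)/m}$ is irreducible. The exponents $d' = d/m$ and $r' = r/m$ are coprime, and $1 \le r' < d'$. Dehomogenize at $z=1$ to get $y^{d'} + c\,x^{r'}$. With $\gcd(d', r') = 1$ this binomial is irreducible, as one sees from the parametrization $x = t^{d'}$, $y = \zeta t^{r'}$ (equivalently, its Newton polygon is a single primitive segment). Homogenizing an irreducible polynomial that is not divisible by $z$ keeps it irreducible, so $g_q$ is irreducible. Each $g_q$ is itself of the form $h_{\overline{A_{d'},\alpha_{d',r'}}}$, since its support is exactly the two endpoints $A_{d'}$ and $\alpha_{d',r'}$. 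This gives the displayed factorization, with the scalar $\lambda$ absorbed into one factor.

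Finally, take $\delta = \frac{r-d}{m}x\partial_x + \frac{r}{m}z\partial_z$ and apply it to the monomials. It kills $y^{d'}$, and on $x^{r'}z^{d'-r'}$ it gives the eigenvalue $\frac{(r-d)r'}{m} + \frac{r(d'-r')}{m}$. This equals $\frac{(r-d)r + r(d-r)}{m^2} = 0$. So $\delta$ annihilates $Y$ and $W$, and therefore annihilates every $g_q$ and $h$. Since $\delta$ is diagonal it is semi-simple. The main obstacle is the irreducibility step, and the argument for it is the coprimality argument above; everything else is direct bookkeeping on the segment.
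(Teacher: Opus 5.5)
Your proof is correct and follows essentially the same route as the paper: identify the support of $h$ as the monomials $(y^{d/m})^{m-k}(x^{r/m}z^{(d-r)/m})^k$, factor the resulting binary form of degree $m$, and get irreducibility of each factor by dehomogenizing and using $\gcd(d/m,r/m)=1$. The differences are minor. The paper pins down the support by first producing the diagonal derivation (via the cross product of $A_d$ and $\alpha_{d,r}$) and then using coprimality of its coefficients, whereas you read off the lattice points of the segment directly. You also justify the distinctness of the $c_q$ from the reducedness of $h$, which the paper leaves implicit.
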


\begin{proof}
    Because there is a nonzero diagonal derivation in $\Der_R(\log_0 h_{\overline{A_d, \alpha_{d,r}}})_0$, every monomial in $\supp(h_{\overline{A_d, \alpha_{d,r}}})$ satisfies the same linear relation. So $\supp(h_{\overline{A_d, \alpha_{d,r}}})$ lies in some plane in $\mathbb{R}^3$, namely the plane spanned by $A_d$ and $\alpha_{d,r}$, regarded as vectors. Encoding the cross product of these vectors as the derivation $d(d-r)x\partial_x - rdz\partial_z$, we deduce this derivation belongs to $\Der_R(-\log_0 h_{\overline{A_d, \alpha_{d,r}}})_0$.

Extracting the greatest common divisor, we find $\delta = \frac{r-d}{m} x \partial_x + \frac{r}{m} z \partial_z \in \Der_R(-\log_0 h_{\overline{A_{d}, \alpha_{d,r}}})_0$, which is manifestly semi-simple. So any $x^a y^b z^c$ in $\supp(h_{\overline{A_d, \alpha_{d,r}}})$ satisfies $\frac{r-d}{m} \cdot a + \frac{r}{m} c = 0$. Since $\frac{r-d}{m}$ and $\frac{r}{m}$ are coprime, $a$ (resp. $c$) is a multiple of $\frac{r}{m}$ (resp. $\frac{r-d}{m}$). We deduce that
    \begin{align*}
        h_{\overline{A_d, \alpha_{d,r}}} = \sum_{0 \leq q \leq m} e_q (y^{\frac{d}{m}})^{m-q} (x^{\frac{r}{m}} z^{\frac{d-r}{m}})^q,
    \end{align*}
    where $e_q \in \CC$ for all $q$ and $e_0, e_m \neq 0.$ Set $\alpha = y^{\frac{d}{m}}$ and $\beta = x^{\frac{r}{m}}z^{\frac{d-r}{m}}$. Regard $h_{\overline{A_d, \alpha_{d,r}}}$ inside $\CC[\alpha, \beta]$. Here it is a homogeneous polynomial in two variables and so factors into linear pieces. After multiplying by $\lambda \in \CC^\star$ to make the coefficient of $y^d \in \supp(h_{\overline{A_d, \alpha_{d,r}}})$ equal one, we deduce that
    \begin{align*}
        h_{\overline{A_d, \alpha_{d,r}}} = \lambda \cdot \prod_{1 \leq q \leq m} \left( y^{\frac{d}{m}} + c_q x^{\frac{r}{m}}z^{\frac{d-r}{m}} \right),
   \end{align*}
    where the $c_q \in \CC^\star$ are distinct. Irreducibility of $y^{\frac{d}{m}} + c_q x^{\frac{r}{m}}z^{\frac{d-r}{m}}$ is equivalent to irreducibility of the dehomogenized $y^{\frac{d}{m}} + c_q x^{\frac{r}{m}} \in \mathbb{C}[x,y]$, which is clear since $\frac{d}{m}$ and $\frac{r}{m}$ are coprime.

That $\delta$ kills each $y^{\frac{d}{m}} + c_q x^{\frac{r}{m}} z^{\frac{d-r}{m}}$ is evident. 
\end{proof}

Consider the following polynomials.

\DefD*

\begin{figure}[h]
\includegraphics[scale=0.25]{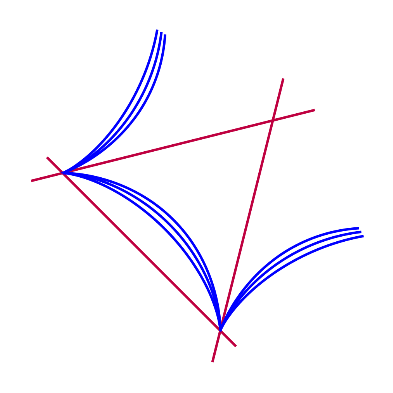}
\caption{A sketch of the projective zero locus of a polynomial in semi-simple standard \emph{provided} $a_1 a_2 a_3 \neq 0$ and $2 \leq u < t$. The coordinate hyperplanes are red, the other components blue, and the intersections are $[0:0:1], [0:1:0], [1:0:0]$. When $u = 1$ (resp. $t-1$) the picture is similar except each blue component is now smooth at $[0:0:1]$ (resp. $[1:0:0]$). When $u = 0$, $f$ defines a decomposable hyperplane arrangement and the picture is different: the blue components are hyperplanes whose only common point is $[1:0:0].$}
\label{fig:specialConfig}
\end{figure}

The next proposition shows the polynomials \eqref{eqn-explicitEnumerationEqn} enumerate all semi-simple $1$-symmetric reduced plane curves.

\begin{proposition} \label{prop-SymmetricReduced-StandardForm-Better}
    Let $Z \subset \PP^2$ be a reduced and semi-simple $1$-symmetric divisor of degree $d$ and $C(Z) \subset \CC^3$ its affine cone. Then, after possibly taking a homogeneous linear change of coordinates on $\CC^3$, we may assume that $C(Z)$ is defined by a reduced homogeneous degree $d$ polynomial $f \in \CC[x,y,z]$ in semi-simple standard form (Definition \ref{def-standardForm}).

    Moreover:
    \begin{enumerate}[label=(\alph*)]
        \item $Z$ is locally quasi-homogeneous and $\Sing(Z) \subseteq \{[1:0:0], [0:1:0], [0:0:1]\}$;
        \item $Z$ is a union of rational curves;
        \item if $Z$ is not a hyperplane arrangement, i.e., if $u > 0$, then
        \begin{equation*}
            \chi(Z) = \begin{cases}
                2 \quad \text{ if } a_1 = a_2 = 0 \\
                3 \quad \text{ otherwise},
            \end{cases}
        \end{equation*}
        where $\chi$ denotes the topological Euler characteristic.
    \end{enumerate}
\end{proposition}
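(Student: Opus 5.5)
The plan is to reduce everything to the case analysis already set up before the statement. Since $Z$ is semi-simple $1$-symmetric, Remark \ref{rmk-coordChangeDer} lets us change coordinates so that a nonzero diagonal derivation lies in $\Der_R(-\log_0 f_d)_0$. By Proposition \ref{prop-diagDerLineSement}, $\phi_d(\supp(f_d))$ then lies on a minimal segment $L$. If $L$ is a point, $f_d$ is a monomial, which is the standard form with $m=0$. Otherwise, after permuting variables, $f_d = h_{\overline{pq}}$ with $p \in \{A_d,B_d,C_d,D_d\}$ and $q \in \{\alpha_{d,r},\beta_{d,r}\}$.

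I would then induct on $d$ to show that every $h_{\overline{pq}}$ is in standard form.

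Lemma \ref{lem-StandardForm-1} strips a factor $y$ from every $\beta$-case. Lemma \ref{lem-StandardForm-3} strips a factor $x$ or $z$ and lowers the degree, while keeping the diagonal derivation. Lemma \ref{lem-StandardForm-2} handles the terminal two-point segments:
\begin{itemize}
\item $c_1xy^{d-1}+c_2z^d$: after permuting $x \leftrightarrow z$ and rescaling, this is of the form $y^{t}+c\,x^{u}z^{t-u}$, up to the roles of the variables;
\item $c_1y^{d-1}z+c_2x^d$: this is of the same type.
\end{itemize}
Lemmas \ref{lem-decomposableArrangement} and \ref{lem-StandardForm-Aalpha-better} handle the $A$-cases. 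There we get either a binary form, hence a product of lines through one point, or $\prod_q (y^{t}+c_qx^{u}z^{t-u})$ with $\gcd(t,u)=1$.

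A binary form in $y,z$ factors as $y^{a_2}z^{a_3}\prod_q(y+c_qz)$, which is the case $t=1$, $u=0$. A form in $x,y$ is the analogous case after a permutation.

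The main obstacle is the bookkeeping in the induction. The stripped monomial factors must be absorbed into $x^{a_1}y^{a_2}z^{a_3}$ in coordinates compatible with the remaining factor. The permutations used to normalize the corner of $T_d$, and the ones used in the base cases, must not destroy the shape $y^t+c\,x^uz^{t-u}$. I would check this by noting that the only permutations needed preserve the set $\{x,y,z\}$ of coordinate lines. The form $y^t + c x^u z^{t-u}$ is symmetric under $x\leftrightarrow z$ with $u \mapsto t-u$. A form $z^t + c\,x y^{t-1}$ can be recast by renaming the variable carrying the pure power as $y$. So each case lands in Definition \ref{def-standardForm}, possibly after a final relabeling. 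Reducedness forces all $a_i,b_q\le 1$ here.

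For (a), I would read off singularities from the explicit form. Away from the three coordinate points, the curves $y^t+c_qx^uz^{t-u}$ are smooth and pairwise disjoint, since distinct $c_q$ give disjoint curves where $xz\neq0$. They also meet the coordinate lines only at coordinate points. Quasi-homogeneity at each coordinate point comes from dehomogenizing. For example, at $[0:0:1]$ we get $x^{a_1}y^{a_2}\prod(y^t+c_qx^u)$, which is weighted homogeneous with weights $(t,u)$; the other points are symmetric.

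For (b), each component is a coordinate line or the curve $y^t+cx^uz^{t-u}$. The latter is parametrized by $[s^t : \lambda s^{?}\ldots]$, or more cleanly is the image of $\PP^1$ via monomials, so it is rational. For (c), I would use additivity of $\chi$. Each branch curve is homeomorphic to $\PP^1$ (it is unibranch at the coordinate points since $\gcd(t,u)=1$), and the $m$ curves share exactly $[1:0:0]$ and $[0:0:1]$, giving $\chi = 2m - 2(m-1) = 2$. Adding a line $x=0$ or $y=0$ adds a $\PP^1$ glued at one or two existing points, and we track the count. The line $z=0$ meets the curves only at $[1:0:0]$, already in $Z$, giving $+1$, and similarly for the others. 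The case split $a_1=a_2=0$ versus otherwise should come out of this count. I expect a careful verification here, in particular of which points each line shares with the union, to be where errors could arise.
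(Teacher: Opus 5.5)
Your reduction to semi-simple standard form, and your arguments for (a) and (b), follow the paper's proof. That is, you run the $h_{\overline{pq}}$ case analysis closed off by Lemmas \ref{lem-StandardForm-1}--\ref{lem-StandardForm-Aalpha-better}, and then inspect the curves $\Gamma_q=\{y^t+c_qx^uz^{t-u}=0\}$ directly; as you note, permuting coordinates keeps the monomial prefix a monomial. For (c) you use cut-and-paste additivity of $\chi$, whereas the paper uses the $\CC^\star$-action $[\lambda^{t-u}x:y:\lambda^{-u}z]$ and counts the fixed points lying on $Z$. That is a fine alternative.

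The genuine problem is your last sentence of (c): the split does \emph{not} come out as $a_1=a_2=0$. Finish your own count.
\begin{enumerate}[label=(\roman*)]
\item The union $\bigcup_q\Gamma_q$ has $\chi=2$.
\item The line $\{y=0\}$ meets it in both $[1:0:0]$ and $[0:0:1]$, so it contributes $0$.
\item The lines $\{x=0\}$ and $\{z=0\}$ meet it only in $[0:0:1]$, resp.\ $[1:0:0]$, and meet each other in $[0:1:0]$. So together they contribute exactly $+1$ as soon as one of them is present.
\end{enumerate}
Hence $\chi(Z)=2$ if and only if $a_1=a_3=0$, i.e.\ if and only if $[0:1:0]\notin Z$. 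The paper's fixed-point count gives the same answer once one checks which of the three coordinate points lie on $Z$. So (c) as printed is false: $z(y^3+xz^2)$ has $a_1=a_2=0$ but $\chi(Z)=3$, while $y(y^3+xz^2)$ has $\chi(Z)=2$. The provable statement has $a_3$ in place of $a_2$. You should have caught this rather than asserting that the printed split follows.

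A smaller point concerns (a). Your claim that the $\Gamma_q$ meet the coordinate lines only at coordinate points needs $u\geq 1$. When $u=0$ (so $t=1$), the line $\{x=0\}$ meets $\{y+c_qz=0\}$ at $[0:-c_q:1]$. For example, $xyz(y+z)$ is reduced and semi-simple $1$-symmetric, and it has four singular points. So no choice of coordinates puts $\Sing(Z)$ inside three points. The paper's one-line argument has the same blind spot, since it only examines $\Sing(\Gamma_q)$ and $\Gamma_q\cap\Gamma_{q'}$. This half of (a) should therefore be restricted to $u>0$. Local quasi-homogeneity is unaffected, since line arrangements have ordinary multiple points.

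Finally, your parametrization in (b) was left unfinished. The map $[\sigma:\tau]\mapsto[\sigma^t:\gamma\sigma^u\tau^{t-u}:\tau^t]$ with $\gamma^t=-c_q$ works.
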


\begin{proof}
    It follows from the discussion in the beginning of this section that for any such $Z$ there exists a homogeneous linear change of coordinates such that $C(Z)$ admits a defining equation of the form $h_{\overline{pq}}$, where $p \in \{A_d, B_d, C_d, D_d\}$ and $q \in \{\alpha_{d,r}, \beta_{d,r}\}$ (where $r$ is legally chosen), see Definition \ref{def-lineSegmentNotation}. Exploiting Lemmata \ref{lem-StandardForm-1}, \ref{lem-StandardForm-2}, \ref{lem-StandardForm-3} we see our defining equation can be taken in the form $x^{e_1} y^{e_2} z^{e_3} h_{\overline{A_{d-e}, \alpha_{d-e, r^\prime}}}$ where $e = e_1+e_2+e_3$ and $r^\prime \leq d-e$. (Performing the calculus of these lemmata leads to some degenerate cases, for example those terminating in Lemma \ref{lem-StandardForm-2}. After another coordinate change these polynomials can be put into the form described above.) If $1 \leq r^\prime \leq d-e - 1$, use Lemma \ref{lem-StandardForm-Aalpha-better} to factor $f$ into irreducibles gives the desired semi-simple standard form. If $r^\prime = 0$ or $r^\prime = d-e$, use Lemma \ref{lem-decomposableArrangement} and (possibly) another coordinate change to produce the the desired form.

    We turn to the `moreover' claims. That $Z$ is locally quasi-homogeneous can be checked directly in all cases. In fact, it was first proven using a coarser enumeration in \cite[pg. 121]{PlessisWall3Dim}. Let $\Gamma_q$ be the projective curve defined by $y^t + c_q x^u z^{t-u}$. Then $\Sing(\Gamma_q) \subseteq \{[1:0:0], [0:0:1]\}$ and, if $q \neq q^\prime$, then $\Gamma_q \cap \Gamma_{q^\prime} = \{[1:0:0], [0:0:1]\}$. The claim about $\Sing(Z)$ easily follows.

    Clearly (b) is true when $u = 0$, i.e., when $Z$ is a hyperplane arrangement. For $u > 0$ it suffices to show that the projective curve $C \subseteq \PP^2$ corresponding to $y^t - x^u z^{t-u} = 0$ is rational. This is straightforward: $a \mapsto (a^t, a^u)$ parametrizes the affine curve $\{y^t - x^u = 0\}$.

    To prove (c), set $u > 0$ and consider the $\CC^\times$-action on $\PP^2$
        \begin{equation*}
            (\CC^\star, \PP^2) \ni (\lambda, [p_1:p_2:p_3]) \mapsto [\lambda^{t-u}p_1: p_2 : \lambda^{-u} p_3] \in \PP^2.
        \end{equation*}
    The fixed points are $\{[1:0:0], [0:1:0], [0:0:1]\}$. And $Z$ is stable under this action. (This is essentially by construction since $(t-u) x \partial_x - u z \partial_z \in \Der_R(-\log f)$.) So $\chi(Z) = \sum \chi(p)$ where the sum runs over the fixed points in $Z$.
\end{proof}

The next proposition explains the non-traceless condition \eqref{eqn-nonTracelessCondition-inIntro}.

\begin{proposition} \label{prop-nonTracelessCondition}
    Let $f \in \CC[x,y,z] \setminus \CC$ be homogeneous with attached divisor $D \subset \PP^2$. Suppose that $D$ is not a normal crossing divisor. Then $f$ admits a semi-simple non-traceless annihilating derivation if and only if, up to a homogeneous, linear coordinate change, $f$ can be written in a semi-simple standard form (Definition \ref{def-standardForm}) satisfying \eqref{eqn-nonTracelessCondition-inIntro}, the non-traceless condition.
\end{proposition}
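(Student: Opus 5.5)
The argument has two halves. First we reduce to standard form. Then we compute the space of degree-zero annihilating derivations of a standard-form polynomial and read off when it contains a non-traceless semi-simple element.

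For the reduction, suppose $f$ admits a semi-simple $\delta \in \Der_R(-\log_0 f)_0$. By Remark \ref{rmk-symDerRedvNonRed} and the listed equivalences, $f_{\red}$ is $1$-symmetric semi-simple. Proposition \ref{prop-SymmetricReduced-StandardForm-Better} then gives a homogeneous linear coordinate change after which $f_{\red}$ is in semi-simple standard form. Because $f$ and $f_{\red}$ have the same irreducible factors, $f$ itself is
\[
x^{a_1}y^{a_2}z^{a_3}\prod_q (y^t+c_qx^uz^{t-u})^{b_q}
\]
with possibly larger exponents. So in both directions we may assume $f$ is in standard form, and it remains to prove: for $f$ in standard form with $D$ not normal crossing, $f$ has a semi-simple non-traceless annihilating derivation if and only if $u(d-3a_3)\neq (t-u)(d-3a_1)$.

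Next we identify $\Der_R(-\log f)_0$. Since $D$ is not normal crossing, $D_{\red}$ is not a union of at most three lines in general position. I would check that this forces some factor $y^t+c_qx^uz^{t-u}$ to occur with $t\ge 2$ (so $u>0$), or, when $u=0$, at least two such lines through $[1:0:0]$ besides those already among the coordinate lines. When $u>0$, the derivations $E$ and $\theta=(t-u)x\partial_x-uz\partial_z$ lie in $\Der_R(-\log f)_0$. I expect $\Der_R(-\log f)_0=\CC E\oplus\CC\theta$ here: the degree bound of Proposition \ref{prop-degreeBound} covers degree $\ge 4$, and the small-degree cases can be checked by hand. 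When $u=0$, the curve is an arrangement of lines through $[1:0:0]$ plus possibly $x=0$. In that case I would compute directly: derivations preserving $\ge 3$ concurrent lines in the pencil act on $\CC[y,z]$ as scalars, so the diagonal derivations again span the space, possibly together with nilpotent ones such as $y\partial_x$, $z\partial_x$ when $a_1=0$.

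A general element of the annihilating part has the form $\tau=\alpha E+\beta\theta$ with $\tau\bullet f=0$. Using $\theta\bullet f=((t-u)a_1-ua_3)f$ and $E\bullet f=df$, the annihilating derivation is, up to scalar,
\[
\tau=d\,\theta-((t-u)a_1-ua_3)E,
\]
with trace
\[
\trace(\tau)=d(t-2u)-3((t-u)a_1-ua_3)=(t-u)(d-3a_1)-u(d-3a_3).
\]
This is nonzero exactly under the non-traceless condition \eqref{eqn-nonTracelessCondition-inIntro}. The derivation $\tau$ is diagonal, hence semi-simple. Since $\Der_R(-\log_0 f)_0$ is one-dimensional, every semi-simple element is a multiple of $\tau$, which gives both directions. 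In the $u=0$ case the extra nilpotent derivations are traceless, and Lemma \ref{lem-JordanChevalleyLogDer} shows that any semi-simple element has the same trace as its diagonal part, so the same formula governs.

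The main obstacle is the dimension claim in the third paragraph, particularly in low degree and in the hyperplane-arrangement case $u=0$. There $\Der_R(-\log_0 f)_0$ could a priori be larger, and a different semi-simple element not diagonal in our coordinates might be non-traceless even when $\tau$ is traceless. I would handle this by using the non-normal-crossing hypothesis to show that every degree-zero logarithmic derivation must fix the finitely many singular points $[1:0:0]$, $[0:1:0]$, $[0:0:1]$ and the distinguished components through them. This confines its matrix to upper-triangular form, with diagonal entries forced into the span of those of $E$ and $\theta$.
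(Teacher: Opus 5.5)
Your route is the paper's: reduce to semi-simple standard form via Proposition \ref{prop-SymmetricReduced-StandardForm-Better}, write down the diagonal annihilating $\tau=d\theta-((t-u)a_1-ua_3)E$, and compute $\trace(\tau)=(t-u)(d-3a_1)-u(d-3a_3)$. The remaining step is to show nothing else in $\Der_R(-\log_0 f)_0$ carries trace, and for $u>0$ your argument for it has a gap. Your expectation that $\Der_R(-\log f)_0=\CC E\oplus\CC\theta$ also holds in small degree is false. Take $f_{\red}=x(y^2+xz)$, or symmetrically $z(y^2+xz)$; the line $x=0$ is tangent to the conic at $[0:0:1]$, so $D$ is not normal crossing. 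Then $\tau'=x\partial_y-2y\partial_z$ kills both $x$ and $y^2+xz$, so $\Der_R(-\log_0 f)_0=\CC\tau\oplus\CC\tau'$ is two-dimensional. Moreover $\tau$ has three distinct eigenvalues and $\tau'$ is strictly upper triangular, so $\tau+c\tau'$ is semi-simple for every $c$. Hence your sentence that every semi-simple element is a multiple of $\tau$ is false here. What is true, and what the paper verifies case by case (its degree-three case exhibits exactly this $\tau'$), is weaker: $\Der_R(-\log_0 f)_0$ has a basis consisting of $\tau$ and nilpotent, hence traceless, derivations. Linearity of the trace then finishes.

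Your proposed repair in the last paragraph does not close this as stated. Its premise, that degree-zero logarithmic derivations fix $[1:0:0]$, $[0:1:0]$, $[0:0:1]$, fails in precisely this example. Only $[0:0:1]$ is singular, and $\tau'$ moves the other two; if all three were fixed the matrix would be diagonal and $\tau'$ could not exist. Lemma \ref{lem-JordanChevalleyLogDer} does not help either: it concerns Jordan--Chevalley parts, which are trivial for a semi-simple element, not coordinate-diagonal parts. What you need is that the diagonal part of an annihilating derivation is again annihilating, and for $u>0$ this has a short proof. Since $\theta\bullet f\in\CC f$, the space $\Der_R(-\log_0 f)_0$ is $\mathrm{ad}\,\theta$-stable. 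Also $[\theta,x_i\partial_j]=(w_i-w_j)x_i\partial_j$ with $w=(t-u,0,-u)$ pairwise distinct. So the weight-zero component of any $\sigma\in\Der_R(-\log_0 f)_0$ is its diagonal part, and it again annihilates $f$. Because some $b_q>0$, the diagonal annihilating derivations form $\CC\tau$, so this component lies in $\CC\tau$. The other components are combinations of off-diagonal $x_i\partial_j$ and are traceless. This gives $\trace(\sigma)\in\CC\cdot\trace(\tau)$ without Proposition \ref{prop-degreeBound} or any low-degree checks. Your $u=0$ argument is fine, since it only uses three concurrent lines through $[1:0:0]$. Your count of lines there is off, though: $xyz(y+z)$ is already not normal crossing.
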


\begin{proof}
    Suppose that $f$ admits a semi-simple non-traceless annihilating derivation. Proposition \ref{prop-SymmetricReduced-StandardForm-Better} entitles us to change coordinates, represent $f$ as $p$, where $p$ is in semi-simple standard form, and prove the claim for $p$. Set $g = x^{a_1} y^{a_2} z^{a_3}$, $h = \frac{p}{g}$, and $\delta = (u -t) x \partial_x + u z \partial_z$. By inspection $\delta \bullet h = 0$. Define
    \begin{equation*}
        \tau:= \frac{a_1(u-t) + a_3 u}{d} E - \delta \neq 0.
    \end{equation*}
    Then $\tau \in \Der_R(-\log_0 p)_0$ and it is diagonal, hence semi-simple. And $\tau$ is traceless exactly when
    \begin{equation*}
        0 = 3 \bigg( \frac{a_1(u-t) + a_3 u}{d} \bigg) - \bigg( (u-t) + u \bigg) = \frac{1}{d} \bigg((u-t)(3a_1) + u(3a_3) + (u-t)d + ud \bigg).
    \end{equation*}
    So the non-traceless condition is equivalent to the semi-simple derivation $\tau \in \Der_R(-\log_0 p)_0$ being non-traceless.

    The proof will be complete if we show that $\Der_R(-\log_0 p)_0$ has a $\CC$-basis consisting of $\tau$ and possibly some nilpotent derivations.

    \emph{Case 1: $p$ cuts out a hyperplane arrangement.} Note $p$ is a decomposable arrangement: $p = x^{a_1} p^{\prime}$, where $p^\prime =y^{a_2} z^{a_3} h \in \mathbb{C}[y,z]$. So $\Der_R(-\log p) = \Der_R(-\log p_{\red})$ is generated by $x \partial_x$, $y \partial_y + z\partial_z$ and a basis of $\Der_{\mathbb{C}[y,z]}(-\log_0 p_{\red}^\prime)$. The latter is cyclically generated by $(\partial_z \bullet p_{\red}^\prime) \partial_y - (\partial_y \bullet p_{\red}^\prime) \partial_z $, since the partial derivatives of $p_{\red}^\prime$ are a regular sequence. Since $p$ is not normal crossing, $\deg(p^\prime) \geq 3$ and we conclude that $\Der_R(-\log_0 p)_0 =\CC \cdot\tau$.

    \emph{Case 2: $\deg(p_{\red}) \geq 4$.} Proposition \ref{prop-degreeBound} implies that $\Der_R(-\log_0 p) = \mathbb{C} \cdot \tau$.

    \emph{Case 3: $\deg(p_{\red}) = 3$.} We may assume that $p$ is no hyperplane arrangement. So $p_{\red} \in \{y^3 + xz^2, y^3 + x^2z, x(y^2 + xz), y(y^2 + xz), z(y^2+xz)\}$.
     In the first and fourth case direct computation reveals that $\dim_{\CC} (\Der_R(-\log p_{\red})_0) = 2$, and so $\Der_R(-\log_0 p_{\red})_0) = \CC \cdot \tau$ and we are done. In the third case, $\Der_R(-\log_0 p)_0 = \CC \cdot \tau \oplus \CC \cdot \tau^\prime$, where $\tau^\prime = x \partial_y - 2 y \partial_z$ is always nilpotent. The second (resp. fifth) case is symmetric to the first (resp. third).

    \emph{Case 4: $\deg(p_{\red}) = 2$.} We may assume that $p$ is no hyperplane arrangement. So $p = (y^2 + xz)^b$. Computation shows $\Der(-\log_0 p)_0 = \Der(-\log_0 p_{\red})_0$ has three linearly independent generators: $\tau$ and two nilpotent derivations.
\end{proof}

\subsection{Bernstein--Sato polynomials and cones of projective plane curves}

Now we can fulfill our promise of giving all the explicit defining equations of $f \in \CC[x,y,z]$ such that $D_{\red} \subset \PP^2$ has (at worst) quasi-homogeneous (necessarily isolated) singularities and $-3/d$ is not a root of the Bernstein--Sato polynomial of $f$.

\ThmE*

\begin{proof}
    Proposition \ref{prop-nonTracelessCondition} shows (c) is equivalent to $f$ admitting a semi-simple non-traceless annihilating derivation. Now use Theorem \ref{thm-quasiHomIsolatedBSRootCharacterization}.
\end{proof}

\begin{remark} \, \label{rmk-BSFactsSemiSimple}
    \begin{enumerate}[label=(\alph*)]
        \item If $f$ is reduced and in semi-simple standard form, the condition $-3/d \notin \Zeroes(b_D(s))$ is solely determined by the numerics of $t$ and $u$, the possible vanishing of  $a_1, a_2, a_3$, and the number $m$. The non-reduced case is more subtle. 
        \item Suppose that $t=2$ and $u=1$ in Theorem \ref{thm-EnumeratingEqnswithBadBSPoly} so that $D_{\red}$ is a (special) union of lines and conics. Then the non-traceless condition is equivalent to $a_1 \neq a_3$. So when equality holds, $-3/d \in \Zeroes(b_f(s))$.
        \item Suppose $f$ is reduced, in semi-simple standard form, and has no linear factors. Then the non-traceless condition is equivalent to $t \neq 2u$ and (recall $u,t$ are coprime) fails exactly when $t = 2$ and $u = 1$.
        \item Suppose $f = y^t + x^u z^{t-u}$ is in semi-simple standard form with $u \neq 0$. So $1 \leq u < t$ and the pairs $u, t$ and $t-u, t$ are coprime. We claim $-3/t \in \Zeroes(b_f(s))$ if and only if the non-traceless condition fails or $t = 3$. By the above, this is equivalent to saying $[-3/t \in \Zeroes(b_f(s))] \iff [t \in \{2,3\}].$
        
        By an additive Thom--Sebastiani result for Bernstein--Sato polynomials \cite[Proposition 0.7, Theorem 0.8]{SaitoMicrolocalBFunction}, $b_f(s)$ is determined by the data of the Bernstein--Sato polynomials of $y^t$ and $x^u z^{t-u}$. In this case, with $e, \ell_1,\ell_2$ signifying integers,
        \begin{equation*}
            \Zeroes(b_f(s)) = \{-1\} \cup \bigcup_{\substack{1 \leq e \leq t-1 \\ 1 \leq \ell_1 \leq u-1}}\{ -(\frac{e}{t} + \frac{\ell_1}{u}) \} \cup \bigcup_{\substack{1 \leq e \leq t-1 \\ 1 \leq \ell_2 \leq t-u-1}}\{ -(\frac{e}{t} + \frac{\ell_2}{t-u}) \}.
        \end{equation*}
        Suppose $\frac{e}{t} + \frac{\ell_1}{u} = \frac{3}{t}$. Then $(3-e)u = \ell_1t$. Because of our restrictions on $u$ and $t$, this is only possible if $u = 1$ and $t=2$. Symmetric reasoning shows that, if $\frac{e}{t} + \frac{\ell_2}{t-u} = \frac{3}{t}$, then $t-u = 1$ and $t=2$. So indeed $[-3/t \in \Zeroes(b_f(s))] \iff [t \in \{2,3\}]$.
    \end{enumerate}
\end{remark}

\section{Theorem \ref{thm-noBadPole} and Theorem \ref{thm-SMCinourCase}}

This section is mostly a proof of Theorem \ref{thm-noBadPole} which, combined with Theorem \ref{thm-EnumeratingEqnswithBadBSPoly}, quickly implies our main result Theorem \ref{thm-SMCinourCase}. We first recall our target.

\ThmF*

\noindent 
Being in semi-simple standard form, $f$ can be written as
        \begin{align} \label{eqn-explicitEnumerationEqn-inProof}
        f &= x^{a_1}y^{a_2}z^{a_3} \prod_{1 \leq q \leq m} \left( y^t + c_q x^u z^{t-u} \right)^{b_q}.
    \end{align}
Here $m, t, u \in \mathbb{Z}_{> 0}$; $u < t$ and  $u$ and $t$ are coprime; $b_1\in \mathbb{Z}_{> 0}$ and $a_1, a_2, a_3, b_2, \dots, b_m \in \mathbb{Z}_{\geq 0}$;
$c_1, \dots, c_m \in \CC^\times$ are pairwise distinct. Note that $d=a_1 + a_2 + a_3+ t(b_1 + \cdots + b_m)$.
(We have that $1 \leq u < t$ and at least one $b_q$ is nonzero since $f$ is no hyperplane arrangement.)

Also recall that such $f$ satisfies the non-traceless condition when
    \begin{equation} \label{eqn-nonTracelessCondition-in-proof}
        u(d-3a_3) \neq (t-u)(d-3a_1).
\end{equation}

\smallskip The proof of Theorem \ref{thm-noBadPole} is spread out over the following subsections. Unless otherwise stated, we fix $f$ as described in Theorem \ref{thm-noBadPole}.

\subsection{First Reductions}

First, to prove Theorem \ref{thm-noBadPole}, it suffices to prove that $-3/d$ is not a pole of the motivic zeta function (in either the global or local setting). To see this, suppose that $s_0 \neq -3/d$ is some other pole of $Z_f^{\mot}(s)$ of order $m_0 \geq 1$. As always, $D \subset \PP^2$ is the projective divisor attached to $\Div(f)$. By \eqref{eqn-motZetafmlinpieces}, there are two possibilities: $s_0$ is a pole of order $m_0$ of some local zeta function attached to $D$ at some point $p$, where $p$ is in the singular locus of $D_{\red}$; $s_0 = -1/\ell$ is a pole of order $1$, where $\ell$ is the multiplicity of one of the components of $D$. In the former case, \cite[Theorem~D]{Blanco2024} says $s_0$ is a root of multiplicity at least $m_0$ of the corresponding local Bernstein--Sato polynomial; in the latter case, Remark \ref{rmk-BSfacts} demonstrates that the local Bernstein--Sato polynomial of $D$ at a generic point of this component is a multiple of $1+ \ell s$. As all the local Bernstein--Sato polynomials of $D$ divide $b_D(s)$, which in turn divides $b_f(s)$, we conclude that $s_0$ is a root of multiplicity at least $m_0$ of $b_f(s)$. The analogous statement about $Z_{f,0}^{\mot}(s)$ and $b_{f,0}(s)$ is entirely similar: on one hand $b_{f,0}(s) = b_f(s)$; on the other, $Z_{f}^{\mot}(s)$ and $Z_{f,0}^{\mot}(s)$ differ only by a factor  $\LL^{3 + ds}$ by Lemma \ref{lem-GlobalZetaHomogeneousFml}.

Moreover, we will see that our proof that $-3/d$ is not a pole proceeds by computing a residue at $-3/d$ and showing it vanishes. Lemma \ref{lem-GlobalZetaHomogeneousFml} implies that these residues for either the global $Z_f^{\mot}(s)$ or the local $Z_{f,0}^{\mot}(s)$ are the same.

\smallskip So hereafter we will only concern ourselves with showing that $-3/d$ is not a pole of the global motivic zeta function $Z_f^{\mot}(s)$.

\subsection{Set-up}\label{Set-up}
Denote 
$V:= \{xyz \prod_{1 \leq q \leq m} \left( y^t + c_q x^u z^{t-u} \right)=0\}$.  (So $\Div(f)_{\red} \subset V$, and we have equality if and only if $a_1a_2a_3\neq 0$.)

We construct an embedded resolution $h$ of $V\subset \CC^3$ in the obvious way.  First we blow up the origin, creating an exceptional surface $F \cong \PP^2$. Let $V_0$ denote the intersection of $F$ with the strict transform of $V$ (so $V_0$ is given by the same equation as $V$, with now $x,y,z$ homogeneous coordinates on $\PP^2$).
Then we construct the minimal embedded resolution  $\pi:F_0\to F$ of $V_0 \subset F$, denoting by $E_j,j\in T,$ the irreducible components of $\pi^{-1}(V_0)$. For sure all exceptional $E_j$ are isomorphic to $\PP^1$; in our setting this is the case for {\em all} $E_j$ since all components of $V_0$ are rational.

 This induces the embedded resolution $h:Y\to  \CC^3$ of $V$; the irreducible components of $h^{-1}V$ are $F_0$ and an
$\mathbb{A}^1$-bundle $F_j$ over each curve $E_j$.

The numerical data of each component $F_j, j \in \{0\}\cup T,$ are denoted by $(N_j, \nu_j)$; so
\begin{equation}\label{N-definition}
\Div(f \circ h) = dF_0 + \sum_{j \in T} N_jF_j
\end{equation}
and
\begin{equation}\label{nu-definition}
K_Y=\Div(h^*(dx \wedge dy\wedge dz)) = 2 F_0 + \sum_{j \in T} (\nu_j-1)F_j,
\end{equation}
since $(N_0,\nu_0)= (d,3)$. Also, when $F_j$ is a component of the strict transform of $F$, we have that $\nu_j=1$ and that $N_j$ is the corresponding power $a_i$ or $b_q$ in the expression (\ref{eqn-explicitEnumerationEqn-inProof}).


Denoting  $E^\circ_I = \cap_{i \in I} E_i \setminus \cup_{k \notin I} E_k$ for every subset $I$ of $T$, we recall that $F_0$ is the disjoint union of all $E_I^\circ$, $I \subset T$. To simplify notation, we set 
 $E_i^\circ = E_{\{i\}}^\circ$.

\bigskip

\begin{convention} \label{conv:only3chains}
    Figure \ref{fig:EmbeddedResolution} and Figure \ref{fig:indeterminancy} presuppose $u \notin \{1, t-1\}$. Otherwise the pictures are slightly incorrect: if $u = 1$, then $E_x$ intersects $E$ and one of the black `chains' of rational curves is missing; if $u = t-1$, then $E_z$ intersects $E^\prime$ and another black `chain' of rational curves is missing.
\end{convention}

\begin{figure}[h]
\includegraphics[width=\textwidth]{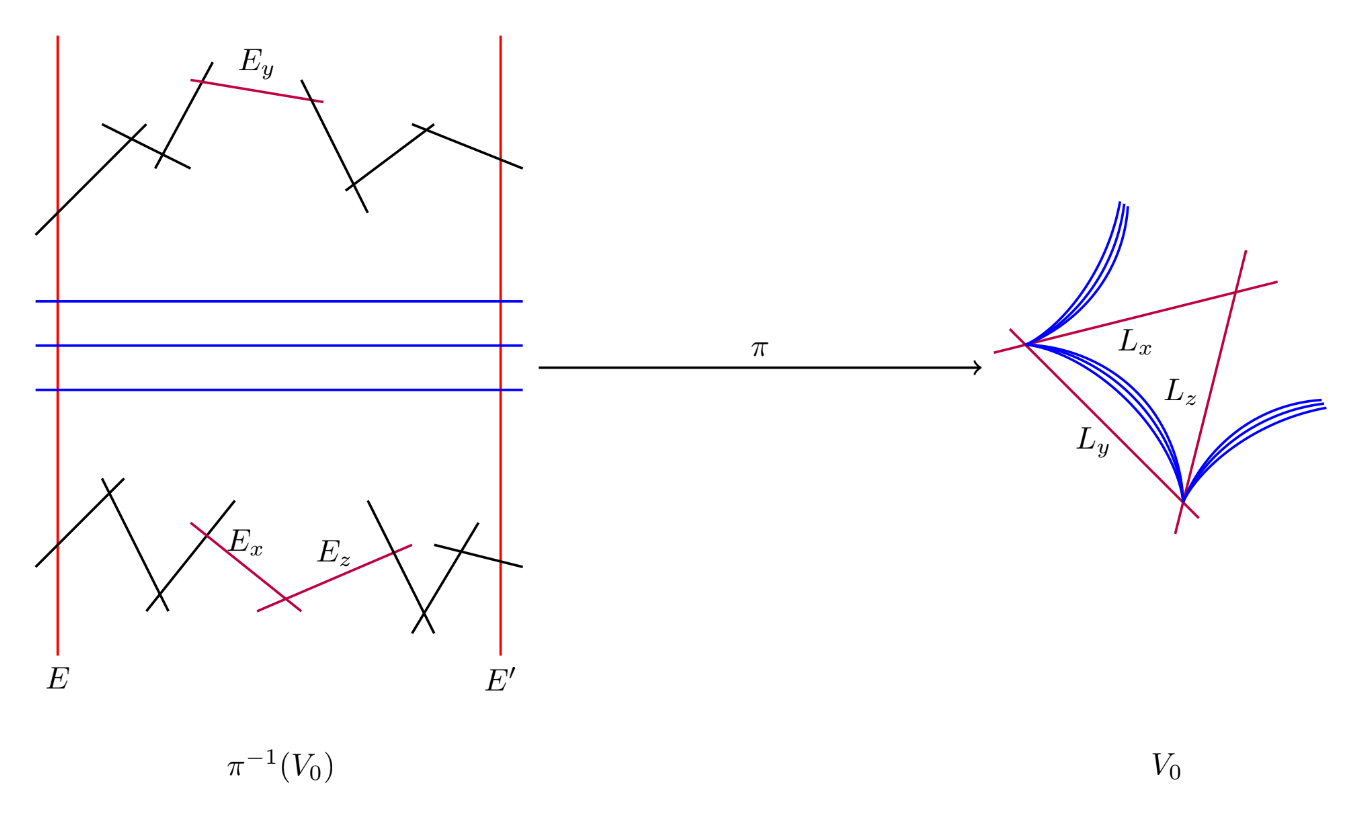}
\centering
\caption{The embedded resolution $\pi: F_0 \to F$ of $V_0 \subset F \simeq \PP^2$ and the components of $\pi^{-1}(V_0)$; $L_x, L_y, L_z$ are the hyperplanes $\{x=0\}, \{y = 0\}, \{z=0\} \subset \PP^2$, and the corresponding strict transforms are $E_x, E_y, E_z$.}
\label{fig:EmbeddedResolution}
\end{figure}

\bigskip
For a homogeneous polynomial, the formula (\ref{zetaformula}) for its motivic zeta function reduces to a formula in terms of an embedded resolution of the associated projective hypersurface, in our case thus in terms of $\pi:F_0\to F\cong \PP^2$. Comparing with Lemma \ref{lem-GlobalZetaHomogeneousFml} yields the following.

\begin{proposition} \label{prop-explicitMotivicFml-Semisimple} Using notation above, we have that
  \begin{align*}
	Z_f^{\mot}(s) &=
\LL^{-3}\cdot \frac{(\LL-1)\LL^{3+ds}}{\LL^{3+ds}-1}  \left( [E_{\emptyset}^\circ] +
\sum_{j\in T} [E_j^\circ] \frac{\LL-1}{\LL^{\nu_j+N_js}-1} \right. \\
&\qquad \qquad \qquad + \left. \sum_{i \neq j\in T} [E_i \cap E_j]\frac{(\LL-1)^2}{(\LL^{\nu_i +N_is}-1)(\LL^{\nu_j +N_js}-1)} \right).
\end{align*}
The local version $Z_{f,0}^{\mot}(s)$ is given by the same formula without the factor $\LL^{3+ds}$.
\end{proposition}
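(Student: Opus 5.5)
The plan is to specialize Lemma \ref{lem-GlobalZetaHomogeneousFml} (equivalently Theorem \ref{thm:motivicResolutionFml} applied to $h$) to the resolution $\pi:F_0\to F\cong\PP^2$ of Subsection \ref{Set-up}, and then simplify the sum over $I\subset T$. The point is that $\pi^{-1}(V_0)$ is a simple normal crossing divisor on the surface $F_0$, so no three distinct components $E_i,E_j,E_k$ share a point. Hence $E_I^\circ=\emptyset$ whenever $|I|\geq 3$, and the sum over $I\subset T$ in \eqref{eqn-GlobalZetaHomogeneousFml} only involves $|I|\in\{0,1,2\}$.

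First I would record the terms one by one. The $I=\emptyset$ term gives $[E_\emptyset^\circ]$, and the singletons $I=\{j\}$ give $[E_j^\circ]\frac{\LL-1}{\LL^{\nu_j+N_js}-1}$. For $I=\{i,j\}$ with $i\neq j$, normal crossings imply that $E_i\cap E_j$ consists of finitely many points, none lying on a third component, so $E_{\{i,j\}}^\circ=E_i\cap E_j$. This yields the product of two factors $\frac{\LL-1}{\LL^{\nu+Ns}-1}$.

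One bookkeeping point needs checking: whether the double sum is meant over unordered or ordered pairs. I would read $\sum_{i\neq j}$ as running over unordered pairs $\{i,j\}$, so each subset $I$ of size two is counted once, and state this convention explicitly.

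Next I must check that the numerical data are compatible. The resolution in Subsection \ref{Set-up} resolves $V\supseteq\Div(f)_{\red}$ rather than $\Div(f)$. Components of $V_0$ with exponent $a_i=0$ or $b_q=0$ therefore appear with $N_j=0$ and $\nu_j=1$. Their factor is then $\frac{\LL-1}{\LL-1}=1$, which is harmless: such a term just recombines with the strata where that component is omitted, via the additivity $[E_I^\circ]+[E_{I\cup\{j\}}^\circ]=[\,\cdot\,]$. Rather than argue this directly, I would simply note that Theorem \ref{thm:motivicResolutionFml} holds for any embedded resolution $h$ of $\Div(f)$ on which $h^{-1}(\Div f)$ is contained in a simple normal crossing divisor. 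The Denef--Loeser formula tolerates auxiliary components with $N_j=0$, since those components merely refine the stratification. The passage from $F_j$ to $E_j$, and from $F_0$ to the factor $\frac{(\LL-1)\LL^{3+ds}}{\LL^{3+ds}-1}$, is exactly the computation in the proof of Lemma \ref{lem-GlobalZetaHomogeneousFml}, using $(N_0,\nu_0)=(d,3)$.

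The local version follows by deleting $\LL^{3+ds}$, as the last sentence of Lemma \ref{lem-GlobalZetaHomogeneousFml} already states. The only mild obstacle is justifying the auxiliary $N_j=0$ components. If one prefers to avoid it, one can instead observe that Lemma \ref{lem-GlobalZetaHomogeneousFml}'s proof never used that every $E_j$ lies over $\Div(f)$, only the normal crossings property and the $\mathbb{A}^1$-bundle structure.
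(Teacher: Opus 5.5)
Your proposal is correct and matches the paper's (one-line) argument. The paper specializes Lemma~\ref{lem-GlobalZetaHomogeneousFml} to $n=3$ and to the resolution $\pi$ of the Set-up; the strata $E_I^\circ$ with $|I|\geq 3$ are empty on the surface $F_0$, and $\sum_{i\neq j}$ is to be read over unordered pairs. Your treatment of the components with $N_j=0$, $\nu_j=1$ (their factor is $\frac{\LL-1}{\LL-1}=1$, so the strata recombine) correctly fills in what the paper leaves implicit. Note that it is this recombination argument, not your closing alternative, that does the work: the proof of Lemma~\ref{lem-GlobalZetaHomogeneousFml} starts from Theorem~\ref{thm:motivicResolutionFml}, whose sum runs only over components of $h^{-1}(\Div f)$.
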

\bigskip

Slightly abusing notation, we rename the two exceptional curves $E_j$ that intersect three times other components in $F_0$ as $E$ and $E'$, as indicated in Figure \ref{fig:EmbeddedResolution}. By a classical calculation we have that $$(N=tu\sum_{i=1}^m b_i + ta_1 + ua_2  ,\ \nu=t+u)$$
 and
$$(N'=t(t-u)\sum_{i=1}^m b_i + ta_3 + (t-u)a_2   ,\ \nu'=2t-u).$$

Direct calculation gives the following birational content to the non-traceless condition.

\begin{lemma} \label{lem-TracelessBirational} The following are equivalent:
    \begin{enumerate}[label=(\alph*)]
        \item the non-traceless condition \eqref{eqn-nonTracelessCondition-in-proof} holds;
        \item $\frac{\nu}{N} \neq \frac{3}{d}$;
        \item $\frac{\nu^\prime}{N^\prime} \neq \frac{3}{d}$.
    \end{enumerate}
\end{lemma}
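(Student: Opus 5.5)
The plan is a direct cross-multiplication. We rewrite each inequality in (b) and (c) as a linear relation in $a_1,a_2,a_3$ and $B:=\sum_{i=1}^m b_i$, and compare it with \eqref{eqn-nonTracelessCondition-in-proof}. The only inputs are $d=a_1+a_2+a_3+tB$ and the stated numerical data
\[
(N,\nu)=(tuB+ta_1+ua_2,\ t+u),\qquad (N',\nu')=(t(t-u)B+ta_3+(t-u)a_2,\ 2t-u).
\]
All quantities are positive, since $N,N'\geq tu b_1>0$. Hence $\nu/N\neq 3/d$ is equivalent to $d(t+u)\neq 3N$, and $\nu'/N'\neq 3/d$ is equivalent to $d(2t-u)\neq 3N'$.

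For (a)$\iff$(b), we compute $d(t+u)-3N$ and expand it with $d=a_1+a_2+a_3+tB$. The coefficient of $B$ is $t(t+u)-3tu=t(t-2u)$. Separately, the difference of the two sides of \eqref{eqn-nonTracelessCondition-in-proof} is
\[
u(d-3a_3)-(t-u)(d-3a_1) = (2u-t)d+3(t-u)a_1-3ua_3 .
\]
Expanding this as well, its $B$-coefficient is $t(2u-t)$. This suggests the identity
\[
d(t+u)-3N = -\bigl[u(d-3a_3)-(t-u)(d-3a_1)\bigr],
\]
and we verify it coefficient by coefficient. For $a_1$, the left side gives $t+u-3t=u-2t$, and the right side gives $-(2u-t)-3(t-u)=u-2t$. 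For $a_2$, the left side gives $t+u-3u=t-2u$, and the right side gives $-(2u-t)=t-2u$. For $a_3$, the left side gives $t+u$, and the right side gives $-(2u-t)+3u=t+u$. All four coefficients agree, so the identity holds and (a)$\iff$(b) follows.

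For (a)$\iff$(c), we run the symmetric computation, or simply apply the symmetry $x\leftrightarrow z$, $u\leftrightarrow t-u$, $a_1\leftrightarrow a_3$. This symmetry exchanges $(N,\nu)$ with $(N',\nu')$, and it changes the traceless expression only by a sign. As a check, the $B$-coefficient of $d(2t-u)-3N'$ is $t(2t-u)-3t(t-u)=t(2u-t)$, which is consistent with the identity
\[
d(2t-u)-3N' = u(d-3a_3)-(t-u)(d-3a_1).
\]
I expect no real obstacle here. The only care needed is bookkeeping, and making sure the numerical data of $E,E'$ stated above are used as given, including in the degenerate cases $u\in\{1,t-1\}$ of Convention \ref{conv:only3chains}, where the formulas still hold.
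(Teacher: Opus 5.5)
Your proposal is correct and is exactly the direct calculation the paper invokes. The identities
\[
d\nu-3N=-\bigl[u(d-3a_3)-(t-u)(d-3a_1)\bigr]=-(d\nu'-3N')
\]
check out coefficient by coefficient, and they also recover $\alpha_E+\alpha_{E'}=0$. One harmless slip: $N'\geq tub_1$ is false in general (e.g.\ $t=5$, $u=4$), but the correct bound $N'\geq t(t-u)b_1>0$ is all you need.
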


\bigskip
We now start the proof of Theorem \ref{thm-noBadPole}.
We  treat the general case, meaning that all $a_i>0$.  
Note that then $V_0=D$ and  $\chi(\PP^2 \setminus D_{\red})=0$. The other cases will follow quite immediately from this case (and its proof). We introduce the following notation.

\begin{notation} \label{notation:alpha}
    First, for $C_\ell$ some projective curve, embedded in a surface, we name (the negation of) its self-intersection number on that surface
    \begin{equation*}
        \kappa_\ell = - \deg( C_\ell \cdot C_\ell).
    \end{equation*}
    Second, given some embedded resolution with irreducible components $\{G_k\}, k \in K,$ fix some numerical data $(\tilde{N}, \tilde{\nu})$. Then for $k \in K$ we put
    \begin{equation*}
        \alpha_k = \nu_k - (\tilde{\nu}/\tilde{N})N_k.
    \end{equation*}
    We usually (but not always) use $\alpha$-terms and $\kappa$-terms in the case of our $\{E_j\}, j \in T$. In this case, if the numerical data  $(\tilde{N}, \tilde{\nu})$ are not specified, they are $(d,3)$ and correspond to $F_0$.
Note then that $\alpha_j = 0 \iff \nu_j/N_j =  3/d$, and that `evaluating' $\LL^{\nu_j + s N_j}$ at $s_0 = -3/d$ yields $\LL^{\alpha_j}$.
\end{notation}

Our proof requires manipulating numerical relations among the $\alpha$-terms, and so we will need the following.

\begin{lemma} \label{lem:alphaFacts} \,
\begin{enumerate}[label=(\alph*)]
    \item Denote $\alpha_j := \nu_j - (3/d)N_j$ for $j\in T$. Then $K_{F_0} = \sum_{j\in T} (\alpha_j -1) E_j$.
    \item Fix $E_j, j\in T,$ intersecting two other curves, say $E_{j_1}$ and $E_{j_2}$. Denote $\kappa_j:= -\deg(E_j\cdot E_j)$, as curve on $F_0$.
\begin{enumerate}[label=(\arabic*)]
\item
Then $\kappa_j \alpha_j = \alpha_{j_1} +  \alpha_{j_2}$.
\item
Assume furthermore that $E_j$ is an exceptional curve of $\pi$. Then
$\kappa_j N_j = N_{j_1} +  N_{j_2}$ and $\kappa_j \nu_j = \nu_{j_1} +  \nu_{j_2}$.
\end{enumerate}
\end{enumerate}
\end{lemma}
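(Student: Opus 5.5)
Part (a) should follow from the canonical divisor formulas \eqref{N-definition} and \eqref{nu-definition} together with adjunction for the blow-up of the origin. On $Y$ we have $K_Y = 2F_0 + \sum_{j\in T}(\nu_j - 1)F_j$ and $\Div(f\circ h) = dF_0 + \sum_j N_j F_j$. Since $f\circ h$ is a regular function, $\Div(f\circ h)$ is principal, and so $dF_0 \sim -\sum_j N_j F_j$. This gives
\[
K_Y \sim \sum_{j \in T}\bigl(\nu_j - 1 - \tfrac{2}{d}N_j\bigr)F_j
\]
with rational coefficients. Adjunction gives $K_{F_0} = (K_Y + F_0)|_{F_0}$, and $F_0|_{F_0} \sim -\tfrac1d\sum_j N_j E_j$ (again by restricting the principal divisor). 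Since each $F_j$ restricts to $E_j$ on $F_0$, we get $K_{F_0} \sim \sum_j(\nu_j - 1 - \tfrac3d N_j)E_j = \sum_j(\alpha_j-1)E_j$ as $\mathbb{Q}$-divisor classes.

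Alternatively, and more cleanly, I would work directly on the surface. On $F = \PP^2$ we have $K_{\PP^2} = -3H$ and $D = \sum a_i L_i + \sum b_q \Gamma_q \sim dH$, so $K_{\PP^2} \sim -\tfrac3d D$. Pulling back along $\pi$ gives $K_{F_0} = \pi^*K_{\PP^2} + \sum_{\text{exc}}(\nu_j - 1)E_j$ and $\pi^*D = \sum_j N_j E_j$. Hence $K_{F_0} \sim \sum_j(\nu_j - 1)E_j - \tfrac3d\sum_j N_j E_j$, using $\nu_j = 1$ for strict transforms. This is the intended equality, understood in $\Pic(F_0)\otimes\mathbb{Q}$.

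For (b)(1), I would apply adjunction to $E_j \cong \PP^1$: $K_{F_0}\cdot E_j = -2 - E_j\cdot E_j = \kappa_j - 2$. On the other side, intersecting (a) with $E_j$ gives
\[
\sum_{k}(\alpha_k - 1)E_k\cdot E_j = (\alpha_j - 1)(-\kappa_j) + (\alpha_{j_1} - 1) + (\alpha_{j_2} - 1),
\]
because $E_j$ meets exactly $E_{j_1}$ and $E_{j_2}$, each transversally in one point, by the normal crossing property. Equating the two expressions gives $\kappa_j - 2 = -\kappa_j\alpha_j + \kappa_j + \alpha_{j_1} + \alpha_{j_2} - 2$, that is, $\kappa_j\alpha_j = \alpha_{j_1} + \alpha_{j_2}$.

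For (b)(2), I would intersect $\pi^*D = \sum_k N_k E_k$ with an exceptional $E_j$. By the projection formula $\pi^*D \cdot E_j = 0$, which yields $-\kappa_j N_j + N_{j_1} + N_{j_2} = 0$. For the $\nu$'s, I would intersect $\pi^*K_{\PP^2} = K_{F_0} - \sum_k(\nu_k - 1)E_k$ with $E_j$, which is zero by the same reasoning. Using $K_{F_0}\cdot E_j = \kappa_j - 2$, this gives $\kappa_j - 2 + \kappa_j(\nu_j - 1) - (\nu_{j_1} - 1) - (\nu_{j_2} - 1) = 0$, that is, $\kappa_j\nu_j = \nu_{j_1} + \nu_{j_2}$.

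The main point requiring care is justifying that the strict-transform components have $\nu_j = 1$ and that $K_{\PP^2}$ is linearly equivalent, rationally, to $-\tfrac3d D$. Beyond that, I also need the convention that $E_k\cdot E_j = 1$ for neighbours in the normal crossing configuration and $0$ otherwise. Everything else is routine intersection theory.
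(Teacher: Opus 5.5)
Your proposal is correct, and its core matches the paper's proof. Your first argument for (a) is exactly what the paper does: use that $\Div(f\circ h)$ is principal in $\Pic(Y)\otimes\QQ$, apply adjunction to $F_0\subset Y$, and restrict each $F_j$ to $E_j$. Your (b)(1) is the paper's adjunction computation on $E_j\cong\PP^1$.

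You depart from the paper in two places.

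Your alternative for (a) stays on the surface, using $K_{\PP^2}=-3H\sim_{\QQ}-\frac3d D$ and pulling back along $\pi$. This is more elementary and shows clearly where the ratio $3/d$ comes from. However, the input it really needs is not $\nu_j=1$ on strict transforms, which is trivial. It is the identification, stated in Section 2 via the $\mathbb{A}^1$-bundle structure, of the numerical data of $F_j$ for $h$ (which is how $N_j,\nu_j$ are defined in the set-up) with the multiplicities of $E_j$ in $\pi^*D$ and in $K_{F_0}-\pi^*K_{\PP^2}$. The threefold argument avoids that identification. It is also the one that generalizes to an arbitrary exceptional surface in a threefold resolution, as in Theorem~\ref{nonpositive euler}, where no ambient $\PP^2$ with $K\sim_{\QQ}-\frac3d D$ is available.

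For (b)(2) the paper only cites \cite[Lemma 4.1]{VeysMonConjSurvey}. Your projection-formula argument, $\pi^*D\cdot E_j=0=\pi^*K_{\PP^2}\cdot E_j$ for exceptional $E_j$ combined with $K_{F_0}\cdot E_j=\kappa_j-2$, is the standard proof and makes the lemma self-contained.

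One small correction: normal crossings alone do not imply that $E_j$ meets each of $E_{j_1},E_{j_2}$ in exactly one point. Like the paper, you are tacitly using the specific chain structure of the configuration in Figure~\ref{fig:EmbeddedResolution}.
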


\begin{proof}
$(a)$
Since the divisor of any rational function is zero in $\Pic(Y)$, the expressions (\ref{nu-definition}) and  (\ref{N-definition}) imply that
\begin{align*}
K_Y&=  2 F_0 + \sum_{j \in T} (\nu_j-1)F_j - \frac3d (dF_0 + \sum_{j \in T} N_jF_j)\\
&= -F_0 + \sum_{j \in T} (\alpha_j-1)F_j
\end{align*}
in $\Pic(Y)\otimes \QQ$. Then the adjunction formula for $F_0\subset Y$ yields  (in $\Pic(F_0)\otimes \QQ$) that
$$
K_{F_0} = (K_Y+F_0)\cdot F_0 = \sum_{j \in T} (\alpha_j-1)F_j \cdot F_0 =
 \sum_{j\in T} (\alpha_j -1) E_j.
$$

$(b)(1)$
Again the adjunction formula, now for $E_j\subset F_0$, yields
$$K_{E_j} = (K_{F_0} + E_j)\cdot E_j = \alpha_j E_j\cdot E_j +  (\alpha_{j_1}-1) E_{j_1}\cdot E_j  +  (\alpha_{j_2}-1) E_{j_2}\cdot E_j,$$
and then $-2=-\kappa_j \alpha_j + (\alpha_{j_1}-1) +  (\alpha_{j_2}-1)$ by taking degrees.

\smallskip
$(b)(2)$ This is well known; see for instance \cite[Lemma 4.1]{VeysMonConjSurvey}.
\end{proof}

Here is a useful consequence.

\begin{lemma} \label{lem:twoVanishingAlphaIntersecting}
    There do not exist two $E_j$ and $E_{j_1}$ with $j, j_1 \in T$ such that $E_j \cap E_{j_1} \neq \emptyset$ and $\alpha_j = 0 = \alpha_{j_1}$.
\end{lemma}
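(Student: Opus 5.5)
We argue by contradiction: suppose $E_j\cap E_{j_1}\neq\emptyset$ and $\alpha_j=0=\alpha_{j_1}$. The tool is the linear relation of Lemma \ref{lem:alphaFacts}(b)(1), $\kappa_k\alpha_k=\sum(\text{neighbouring }\alpha\text{'s})$ for curves with two neighbours, which lets us propagate vanishing of $\alpha$ along the chains of the resolution graph in Figure \ref{fig:EmbeddedResolution}. We will push the vanishing until it reaches a curve whose $\alpha$ we know explicitly, and get a contradiction there.

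\emph{Step 1: identify the possible pairs.} Every $E_k$ lies in one of two classes. Either it is a strict transform ($E_x,E_y,E_z$ or a component $\Gamma_q$), with $\nu_k=1$ and $N_k\in\{a_i,b_q\}$. Then $\alpha_k=1-\tfrac3d N_k$, which is nonzero because we assume $\tfrac1{a_i},\tfrac1{b_i}\neq\tfrac3d$; here we use $a_i>0$, while $\alpha_k=1$ when $b_q=0$. Or it is exceptional for $\pi$. So both $E_j$ and $E_{j_1}$ must be exceptional. By Lemma \ref{lem-TracelessBirational} and the non-traceless condition, $\alpha_E\neq0$ and $\alpha_{E'}\neq0$. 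Hence $E_j,E_{j_1}$ are adjacent curves inside one of the chains of exceptional curves. These chains join $E$ (or $E'$) to the strict transform of a coordinate line, or run between $E$ and $E'$ along $E_y$ if needed (Convention \ref{conv:only3chains}). Every curve inside such a chain meets exactly two other components.

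\emph{Step 2: propagate.} We apply Lemma \ref{lem:alphaFacts}(b)(1) to $E_j$: since $\kappa_j\alpha_j=0=\alpha_{j_1}+\alpha_{j'}$, where $E_{j'}$ is its other neighbour, we get $\alpha_{j'}=0$. Iterating along the chain away from $E_{j_1}$ makes every $\alpha$ vanish until we reach the chain's end curve. That end curve is either $E$, $E'$, or a strict transform, and in every case it has $\alpha\neq0$ by Step 1. This is the contradiction. The linear chain is essential here, since each interior curve has exactly two neighbours.

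\emph{Main obstacle.} The delicate part is bookkeeping the precise shape of the resolution graph. We must check that every exceptional curve other than $E,E'$ has exactly two neighbours in $\pi^{-1}(V_0)$, and that each chain terminates at $E$, $E'$ or a strict transform. This includes the degenerate cases $u\in\{1,t-1\}$ of Convention \ref{conv:only3chains}, and the chains created by resolving the nodes at which $E_x$, $E_y$, $E_z$ meet. Those nodes are the normal crossings of coordinate lines, so no blow-ups occur there, but this should be verified. We also need that the $\Gamma_q$ with $b_q=0$ still appear as components of $V_0$ with $\alpha=1$, which does not affect the argument.
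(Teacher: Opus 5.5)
Your proposal is correct and is essentially the paper's proof: the paper calls $E$, $E'$ and the strict transforms ``innocent'' and shows their $\alpha$-terms are nonzero via Lemma \ref{lem-TracelessBirational} and the hypotheses on the $a_i,b_q$. It then uses Lemma \ref{lem:alphaFacts}(b)(1) to propagate the vanishing along the exceptional chains until it reaches an innocent component, exactly as you do. One small inaccuracy: each chain of exceptional curves joins $E$ or $E'$ to one of $E_x,E_y,E_z$, and the only coordinate point where no blow-up occurs is $[0:1:0]$. This is all the propagation argument needs, so it does not affect your proof.
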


\begin{proof}
    Suppose such $E_j$ and $E_{j_1}$ exists. Call the following components innocent: $E, E^\prime$, and any $E_k$ such that the corresponding $F_k$ is a strict transform of $h$. By Lemma \ref{lem-TracelessBirational} and our assumptions on the $a_i$ and $b_q$, any innocent component has non-vanishing $\alpha$-term. So $E_j$ is not innocent. In particular, $E_j$ intersects exactly two components: $E_{j_1}$ and $E_{j_2}$. Using Lemma \ref{lem:alphaFacts}, we find that $\alpha_{j_2} = 0$ and so $E_{j_2}$ is not innocent. Repeating this argument with $E_{j_2}$ in the role of $E_j$ propagates the vanishing of $\alpha$-terms throughout the components of Figure \ref{fig:EmbeddedResolution} until we eventually reach an innocent component, which is a contradiction.
\end{proof}

\begin{remark} \label{rmk:alphaEvsalphaE'}
    We have $\alpha_E + \alpha_{E^\prime} = 0$. An elementary proof follows by the explicit calculation of $(N, \nu)$ and $(N', \nu^\prime)$, as presented above.
\end{remark}

\subsection{Pole order at most one}

First of all, we claim that $-3/d$ cannot be a pole of order $3$ or $2$ of $Z^{\mot}_f(s)$.  If it is a pole of order $3$, then there must be some $i,i_1 \in T,$ such that $E_i \cap E_{i_1} \neq \emptyset$ and $\alpha_i = 0 = \alpha_{i_1}$. But Lemma \ref{lem:twoVanishingAlphaIntersecting} prohibits this.

So suppose that $-3/d$ is a pole of $Z_f^{\mot}(s)$ of order $2$. Then there must be some $E_i$ with $i \in T$, such that  $\alpha_i = 0$ and $E_i$ intersects exactly two components $E_{i_1}$, $E_{i_2}$. If $E_i$ witnessed $-3/d$ as a pole of order $2$, the following sum must witness $-3/d$ as a pole of order $1$. However,
\begin{align} \label{eqn-ThreeTermResidue-1}
    [E_i^\circ] \frac{\LL-1}{\LL^{\nu_i + N_i s} - 1} &+ [E_{\{i ,i_1\}}^\circ] \frac{\LL-1}{\LL^{\nu_i + N_i s} - 1} \frac{\LL-1}{\LL^{\nu_{i_1} + N_{i_1} s} - 1}
    + [E_{\{i ,i_2\}}^\circ] \frac{\LL-1}{\LL^{\nu_i + N_i s} - 1} \frac{\LL-1}{\LL^{\nu_{i_2} + N_{i_2} s} - 1} \\
    &= \frac{(\LL - 1)^2}{\LL^{\nu_i + N_i s} - 1}\cdot  \bigg( 1 + \frac{1}{\LL^{\nu_{i_1} + N_{i_1}s} - 1} + \frac{1}{\LL^{\nu_{i_2} + N_{i_2}s} - 1} \bigg) \nonumber \\
    &= \frac{(\LL - 1)^2}{\LL^{\nu_i + N_i s} - 1}\cdot  \frac{ \LL^{\nu_{i_1} + \nu_{i_2} + N_{i_1}s + N_{i_2}s} - 1}{(\LL^{\nu_{i_1} + N_{i_2} s} - 1)(\LL^{\nu_{i_1} + N_{i_2} s} - 1)} \nonumber \\
    &= \frac{(\LL - 1)^2}{\LL^{\nu_i + N_i s} - 1}\cdot  \frac{ \LL^{\kappa_i(\nu_i + N_i s)} - 1}{(\LL^{\nu_{i_1} + N_{i_2} s} - 1)(\LL^{\nu_{i_1} + N_{i_2} s} - 1)} \nonumber \\
    &= \frac{(\LL - 1)^2 \sum_{e=0}^{\kappa_i - 1} \LL^{e(\nu_i + N_i s)}}{(\LL^{\nu_{i_1} + N_{i_2} s} - 1)(\LL^{\nu_{i_1} + N_{i_2} s} - 1)}. \nonumber
\end{align}
The penultimate equality employs Lemma \ref{lem:alphaFacts}. By Lemma \ref{lem:twoVanishingAlphaIntersecting}, neither $\alpha_{i_1}$ nor $\alpha_{i_2}$ vanishes. So \eqref{eqn-ThreeTermResidue-1} does not have a pole at $-3/d = -\nu_i/N_i$, and then $-3/d$ is not a pole of $Z_f^{\mot}(s)$ of order two.

\smallskip
The important remark here is that $3/d$ {\em can} be equal to $\nu_i/N_i$ for some $E_i$ in Figure \ref{fig:EmbeddedResolution},  different from $E$ and $E'$.  In that case $-3/d$ is originally a {\em candidate pole of order 2 of $Z^{\mot}_f(s)$}, but, since $E_i$ intersects two other components in Figure \ref{fig:EmbeddedResolution}, we have seen it can finally only be a pole of order one or no pole.


\begin{example}\label{example with alpha zero} 
In fact, the situation above occurs quite frequently, already for reduced $f$. We mention two examples of small degree, one involving the three coordinate hypersurfaces and one with irreducible $f$.

(1) Let $f= xyz(y^3-x^2z)$. Then in  Figure \ref{fig:EmbeddedResolution} both the component  $E_i$ intersecting $E_x$ and the left component $E_i$ intersecting $E_y$ have associated $\nu_i/N_i = 3/d =1/2$.

(2) Let $f= y^5-x^3z^2$. Then in Figure \ref{fig:EmbeddedResolution} the component $E_i$ intersecting $E_x$ has associated $\nu_i/N_i = 3/d =3/5$. (Note that in this case an embedded resolution is visualized by deleting the curves $E_x, E_y,$ and $E_z$ in Figure \ref{fig:EmbeddedResolution}.)

\end{example}

Thus, the general plan is as follows. We consider the sum in the parentheses of Proposition \ref{prop-explicitMotivicFml-Semisimple}, which by \eqref{eqn-ThreeTermResidue-1} we can rewrite as
\begin{equation*} \label{eqn-preResidueFml}
    \sum_{I \subset T, \forall i \in I: \alpha_i \neq 0} [E_I^\circ] \prod_{i \in I} \frac{\LL - 1}{\LL^{\nu_i + N_is} - 1} + \sum_{i \in T, \alpha_i = 0} \frac{(\LL - 1)^2 \sum_{e=0}^{\kappa_i - 1} \LL^{e(\nu_i + N_i s)}}{(\LL^{\nu_{i_1} + N_{i_2} s} - 1)(\LL^{\nu_{i_1} + N_{i_2} s} - 1)},
\end{equation*}
where given any $\alpha_i = 0$ with $i \in T$, $E_{i_1}$ and $E_{i_2}$ are the unique components intersecting $E_i$. We evaluate this expression at $s_0 = -3/d$ to obtain the \emph{residue of $-3/d$ for $Z_f^{\mot}(s)$}:
\begin{equation} \label{eqn-residueFml}
    \sum_{I \subset T, \forall i \in I: \alpha_i \neq 0} [E_I^\circ] \prod_{i \in I} \frac{\LL - 1}{\LL^{\alpha_i} - 1} + \sum_{i \in T, \alpha_i = 0} \kappa_i \frac{(\LL - 1)^2}{(\LL^{\alpha_{i_1}} - 1)(\LL^{\alpha_{i_2}} - 1)}.
\end{equation}
(Technically we have deleted a harmless $\LL^{-3}(\LL - 1)$ factor from the residue.) Our task is to show this residue vanishes, thereby confirming that $-3/d$ is not a pole of $Z_f^{\mot}(s)$.

\medskip
If  {\em all} $\alpha_i \neq 0$, then Theorem \ref{nonpositive euler} below, applied to $F_0$, immediately implies that $-3/d$ is not a pole of $Z^{\mot}_f(s)$.

\begin{theorem}(\cite{VeysCrelle})\label{nonpositive euler}
Let $M$ be a threedimensional non--singular variety and $f: M
\rightarrow \CC$ a non--constant regular function. Let $r : Y
\rightarrow M$ be an embedded resolution of $\{ f = 0 \}$. 
Denote by $F_j,j \in J$, the irreducible components of $r^{-1} \{ f = 0 \}$ and let
$N_j,  \nu_j$ and $F^\circ_J$ be as in the Set-up \ref{Set-up}.
Fix a projective exceptional surface $F_j$ and denote $J_j:= \{i\in J\setminus \{j\} \mid F_i\cap F_j \neq \emptyset\}$.
Suppose that 
$\nu_j/N_j \ne \nu_i/N_i$ (or, equivalently, all $\alpha_i:= \nu_i - (\nu_j/N_j)N_i \neq 0$) for
all $i\in J_j$.

Denote
\begin{equation}\label{residue}
R_{F_j} :=[F^\circ_j] + \sum_{\emptyset \neq I \subset  J_j } [F^\circ_I] \prod_{i \in I} \frac{\LL-1}{\LL^{\alpha_i}-1} ,
\end{equation}
being (up to a nonzero constant factor) `the contribution of $F_j$ to the residue of $-\nu_j/N_j$ for $Z^{\mot}_f(s)$'.

Denote also $C:= \cup_{i\in J_j} (F_i\cap F_j)$.  Assume that $C$ is connected and that $F_j\setminus C$ does not contain a $(-1)$-curve (i.e., a smooth rational projective curve that can be blown down).
If $\chi(F_j^\circ)\leq 0$, then  $R_{F_j} = 0$.
\end{theorem}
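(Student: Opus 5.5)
The plan is to work entirely on the projective surface $S:=F_j$ with the normal crossing curve configuration $C=\bigcup_{i\in J_j} C_i$, where $C_i:=F_i\cap F_j$. Each $C_i$ carries the decoration $\alpha_i\neq 0$. The first step is the analogue of Lemma \ref{lem:alphaFacts}(a). I would write $K_M$ and $\Div(f)$ in terms of the $F_i$, subtract $(\nu_j/N_j)$ times the divisor of $f$, and apply adjunction to $F_j\subset Y$ in $\Pic(Y)\otimes\QQ$. This should give
\begin{equation*}
K_S \equiv \sum_{i\in J_j}(\alpha_i-1)C_i .
\end{equation*}
Then adjunction on each $C_i\subset S$ yields the local relations $2g(C_i)-2 = -\kappa_i\alpha_i+\sum_{k\neq i}(\alpha_k-1)\,C_k\cdot C_i$, generalising Lemma \ref{lem:alphaFacts}(b)(1).

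The second step is to show that $R_{F_j}$ is invariant under two operations. The first is blowing up a point of $C$: a point of $C_i^\circ$ produces a new curve with $\alpha=\alpha_i+1$, and a point of $C_i\cap C_k$ produces one with $\alpha=\alpha_i+\alpha_k$. The second is the inverse operation, blowing down a $(-1)$-curve of $C$ that meets the rest of $C$ in at most two points. For each move I would check that the new decoration is nonzero (or can be avoided), and that the change in $R_{F_j}$ is zero. The latter is a direct identity in $\M$, of the same type as the telescoping computation \eqref{eqn-ThreeTermResidue-1}. Using these moves, I would contract $(S,C)$ to a \emph{minimal} pair. By the hypothesis that $S\setminus C$ contains no $(-1)$-curve, every $(-1)$-curve of $S$ then lies in $C$ and is not contractible within the configuration. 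Connectedness of $C$ is preserved throughout.

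The third step, which I expect to be the main obstacle, is a classification of minimal pairs with $\chi(S\setminus C)\le 0$. I would combine three inputs: the formula for $K_S$ above, Noether's formula and $\chi(S)=\chi(S\setminus C)+\chi(C)$, and the adjunction relations. With these I would aim to show that $C$ is essentially a cycle of rational curves, or a configuration whose complement fibres over a curve of nonpositive Euler characteristic. A plausible alternative is the case $K_S$ nef with $\alpha_i\equiv 1$ forced. I would first try running the minimal model program on $S$ relative to $K_S+C$ and use the sign of $\chi(S\setminus C)$ to exclude $\PP^2$ and most ruled cases. In each surviving case I would compute $R_{F_j}$ explicitly, using that the classes $[C_i^\circ]$ and $[C_i\cap C_k]$ are explicit.

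For a cycle of rational curves this computation reads
\begin{equation*}
R_{F_j}=[S\setminus C]+\sum_i (\LL-1)\,[C_i^\circ]/(\LL^{\alpha_i}-1)+\sum_{\text{edges}}(\LL-1)^2/\bigl((\LL^{\alpha_i}-1)(\LL^{\alpha_k}-1)\bigr).
\end{equation*}
I would combine the terms pairwise with the relation $\kappa_i\alpha_i=\alpha_{i-1}+\alpha_{i+1}$, geometric-series style, which should make the fractional part telescope to $-[S\setminus C]$. For fibred cases I would expect $[S\setminus C]$ to factor as $(\LL-1)$ times a class that cancels against the decorated terms in the same manner. Either way, the conclusion is $R_{F_j}=0$.
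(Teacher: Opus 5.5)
Your Steps 1 and 2 match the architecture of the proof in \cite{VeysCrelle} that the paper sketches after the general case of Theorem \ref{thm-noBadPole}. That architecture is: adjunction gives $K_{F_j}\equiv\sum_{i\in J_j}(\alpha_i-1)C_i$; $R$ is invariant under blow-ups and blow-downs centred in $C$; one transports $(F_j,C)$ to an end configuration; and $R$ visibly vanishes there, as in the computation of $R_\Sigma(C,A)$.

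But Step 3, which you flag as the main obstacle, carries essentially all of the content, and you do not supply it. What is needed is Veys's structure theorem \cite[Theorem 3.5]{VeysStructure}. It says that a sequence of blow-ups $b$, followed by a sequence of blow-downs $g$, all within the configuration, brings $(S,C)$ to a ruled surface with one of three explicit end configurations. An MMP relative to $K_S+C$ plus Noether's formula is not a proof of this. (Your \lq $K_S$ nef\rq\ branch is also empty, since exceptional surfaces are birationally ruled.)

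Contracting alone to a \lq minimal pair\rq\ does not bring you into fibred form either. Already $\PP^2$ with a triangle of lines must first be blown up at a vertex, so your minimal pairs would have to be classified from scratch. Also, your claim that at a minimal pair every $(-1)$-curve of $S$ lies in $C$ misreads the hypothesis, which only excludes $(-1)$-curves disjoint from $C$. For instance, on $\mathbb{F}_1$ with $C$ two fibres plus a $(+1)$-section, $\chi(S\setminus C)=0$, yet the $(-1)$-section meets $C$ without lying in it.

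The second missing idea concerns zero decorations. Contractions never change decorations, so this issue is invisible in your contraction-only plan. It appears as soon as you blow up, which the structure theorem requires. Blowing up $C_i\cap C_k$ with $\alpha_i+\alpha_k=0$, or a point of $C_i^\circ$ with $\alpha_i=-1$, produces a curve with decoration $0$, where the factor $(\LL-1)/(\LL^{0}-1)$ in $R$ is undefined. The centres are dictated by the configuration, so your \lq (or can be avoided)\rq\ has no justification.

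The actual proof works instead with the generalized invariant $R_S(C,A)$ of Definition \ref{allowed}, extended as in \cite[1.4]{VeysCrelle}. There a zero-decorated curve meeting exactly two others contributes $\kappa_i(\LL-1)^2/\bigl((\LL^{\beta_{i_1}}-1)(\LL^{\beta_{i_2}}-1)\bigr)$. This invariant is preserved by every blow-up except one whose centre lies on a zero-decorated curve and on no other component \cite[Lemma 1.5]{VeysCrelle}. The crux is to show that this exceptional move never occurs in $b$ or in $g$. That is exactly where the hypothesis that all $\alpha_i\neq0$ on $F_j$ enters, crucially for $g$.

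Example \ref{conicsexample} shows this is not a technicality: there, with a single zero decoration and all other hypotheses of the theorem satisfied, the residue is nonzero. Nothing in your proposal plays this role.
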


\medskip
\subsection{Strategy for vanishing $\alpha$-terms}
Unfortunately, in our setting it is indeed possible that one or more of these $\alpha_i$ vanish (recall Example \ref{example with alpha zero}). 
We assume now that
$\alpha_i = 0$ for at least one $E_i$.

We will use some ideas from the proof of Theorem \ref{nonpositive euler}, and show that its conclusion is still valid here.  We stress that this is not true in general! We will strongly use the concrete graph of the (minimal) embedded resolution of $V_0\subset \PP^2$.  (See Example \ref{conicsexample}, where the conclusion of Theorem \ref{nonpositive euler} is false.)

\bigskip
Consider the rational map $\psi:\PP^2 \dashrightarrow \PP^1=B : [x:y:z] \to [y^t : x^u z^{t-u}]$. The minimal sequence of blow-ups that resolves the indeterminacies of $\psi$ is precisely the minimal embedded resolution map $\pi: F_0\to F\cong \PP^2$.  Let $\tilde\psi:F_0\to B$ denote the resulting morphism.

We have that $\tilde\psi$ maps $E$ and $E'$ bijectively to  $B$. The general fibers of $\tilde\psi$ are the strict transforms in $F_0$ of all curves $y^t + \lambda x^u z^{t-u}, \lambda \in \CC^*$.  The two special fibers, above $0$ and $\infty$, are the chains of $\PP^1$'s in Figure \ref{fig:indeterminancy}, containing $E_y$ and $E_x\cup E_z$, respectively.

The general fibre of $\tilde{\psi}$ being $\PP^1$, it is well known
 that $\tilde\psi$ factors as $p  \circ g$, where $p:\Sigma \to B$ is a ruled surface and $g:F_0\to \Sigma$ is a sequence of blow-downs, contracting the two special fibres to a single $\PP^1$. See Figure \ref{fig:indeterminancy}.

\begin{figure}[h]
\includegraphics[width=\textwidth]{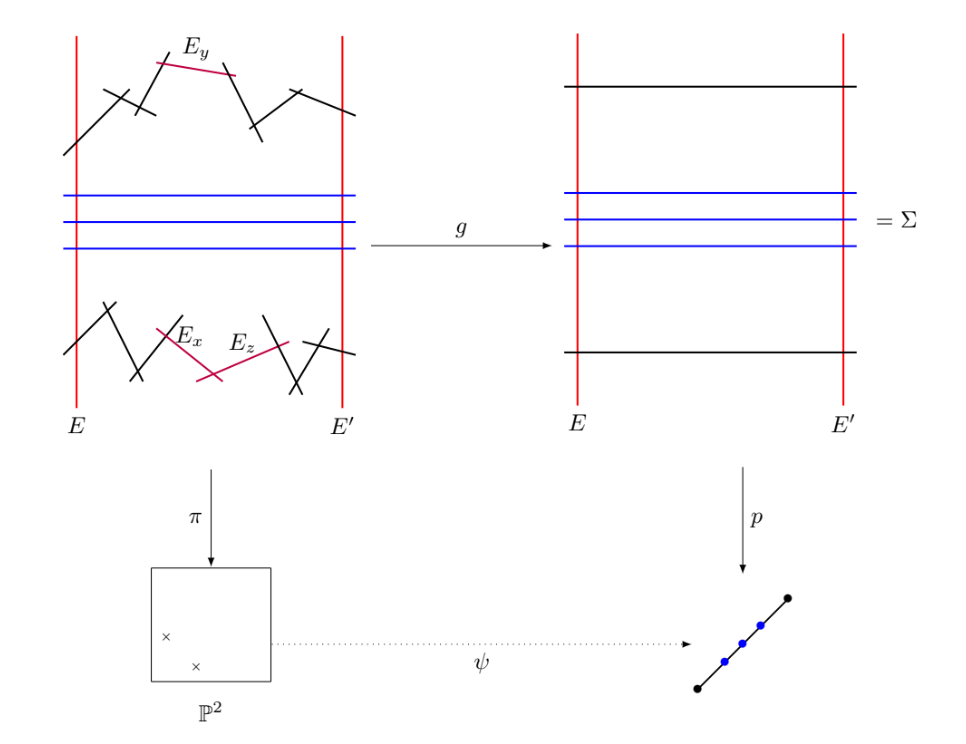}
\centering
\caption{The rational map $\psi: \PP^2 \dashrightarrow \PP^1 = B$ appears in the bottom row; the points labeled $\times$ denote its indeterminancy locus. Here $\pi: F_0 \to \PP^2$ resolves the indeterminancy locus, inducing $\tilde{\psi}: F_0 \to \PP^1$. The blue $\PP^1$'s are some general fibers of $\tilde{\psi}$ which are sent to blue points in $\PP^1$; the chains of $\PP^1$'s are the special fibers of $\tilde{\psi}$, which are sent to the black points of $\PP^1$ (and represent $0$ and $\infty$). Note $E$ and $E^\prime$ biject onto $\PP^1$. Finally, $g: F_0 \to \Sigma$ contracts these chains of $\PP^1$'s to two single $\PP^1$'s yielding the ruled surface $p: \Sigma \to \PP^1$.}
\label{fig:indeterminancy}
\end{figure}

\bigskip
Inspired by the proof of Theorem \ref{nonpositive euler}, our strategy is the following.

\noindent
 (1) We introduce more generally an expression $R$, generalizing (\ref{residue}), to any decorated normal crossing divisor on a non-singular surface with \lq most\rq\ decorations $\alpha_i \neq 0$, such that, in the presence of some $\alpha_i=0$ in our setting, the actual residue of $-3/d$, as candidate pole for $Z^{\mot}_f(s)$, is exactly such an expression.

\noindent
 (2) We show that this expression does not change when performing a blow-up or blow-down \lq within the divisor\rq.

\noindent
 (3) We verify that this expression is zero for the  end configuration in $\Sigma$.

(In fact, realizing all this is not difficult, {\em assuming that all $\alpha_i \neq 0$}).

\medskip
\subsection{Generalized expressions for `residues'}
In \cite{VeysCrelle}, a generalized expression $R$ is introduced, allowing {\em some} $\alpha_i$ to be zero. It will not be a surprise that precisely the term
\begin{equation}\label{zero-contribution}
\kappa_i
\frac{(\LL-1)^2}{(\LL^{\alpha_{i_1}}-1)(\LL^{\alpha_{i_2}} - 1)}.
\end{equation}
will pop up. Indeed, if $E_i$ with $\alpha_i = 0$ exists, it intersects exactly two other components, say $E_{i_1}$ and $E_{i_2}$, and \eqref{zero-contribution} is exactly the contribution of $E_i$ to the residue \eqref{eqn-residueFml}. Here we mention only the (special) case of loc. cit. that we will use.

\begin{define}\label{allowed} \,
\begin{enumerate}[label=(\alph*)]
    \item Let $S$ be a non--singular projective surface, $C=\cup_{i \in T}  C_i$ a simple normal crossing curve on $S$, and $A=\sum_{i \in T} (\beta_i - 1) C_i$ a $\QQ$-divisor on $S$, which is a representative of $K_S$. (Note that, when $\beta_i = 1$, the component $C_i$ is not contained in the support of $A$.)
 We call the pair $(C,A)$ {\em allowed} if for the $i\in T$ with $\beta_i = 0$, we have that
    \begin{enumerate}[label=(\roman*)]
        \item $C_i$ is rational,
        \item exactly two curves $C_{i_1}$ and $C_{i_2}$  intersect $C_i$, and $\beta_{i_1}\neq 0 \neq \beta_{i_2}$.
    \end{enumerate}
\item Let $(C,A)$ be an allowed pair on $S$.
As before, we put $C^\circ_I := (\bigcap_{i \in I} C_i) \setminus
(\bigcup_{\ell \not\in I} C_\ell)$ for $I \subset T$, and denote
$\kappa_i := - \deg(C_i \cdot C_i)$.
We associate to $(C,A)$  the invariant
\begin{equation}\label{R-expression}
R_S                               (C,A) := \sum_{I \subset T, \forall i
\in I : \beta_i \ne 0} [C^\circ_I] \prod_{i \in I}
\frac{\LL-1}{\LL^{\beta_i} - 1} + \sum_{i \in T, \beta_i= 0} 
\kappa_i
\frac{(\LL-1)^2}{(\LL^{\beta_{i_1}}-1)(\LL^{\beta_{i_2}} - 1)}.
\end{equation}
\end{enumerate}
\end{define}

\begin{remark} In (a)(i) above we have that $\beta_{i_1} + \beta_{i_2} = 0$ (as a consequence of the adjunction formula $(K_S + C_i) \cdot C_i  = K_{C_i}$  and the fact that $\beta_i=0$).
\end{remark}


These generalized expressions $R_S(C,A)$ 
turn out to be invariant under the blow-ups that occur in our setting!

\begin{lemma}(\cite[Lemma 5.1]{VeysCrelle})\label{blowup} Let $S$ be a non--singular projective
surface. Let $C=\cup_{i \in T}  C_i$ be a simple normal crossing curve on $S$, and $A=\sum_{\ell \in T} (\beta_\ell - 1)C_\ell$  a representative of $K_S$.
Consider a blow-up $b:\tilde S \to S$ with centre an intersection point of two curves in $C$.
 Then
\begin{enumerate}[label=(\alph*)]
    \item $(C,A)$ is allowed on $S$ if and only if $(b^{-1}C, K_{\tilde S,S} +b^*A)$ is allowed on $\tilde S$;
    \item in that case we have that $R_{\tilde S} (b^{-1} C, K_{\tilde S,S} +b^*A) = R_S (C,A)$.
\end{enumerate}
Here $K_{\tilde S,S}=K_{\tilde S} - b^*K_S$ denotes the relative canonical divisor.
\end{lemma}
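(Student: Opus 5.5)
The plan is to compare the two configurations term by term. Only the strata lying over or near the blown-up point $P = C_{j_1} \cap C_{j_2}$ change, and each of (a) and (b) becomes a local identity among decorations.

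First we compute the new divisor. We have $K_{\tilde S,S} = \tilde C_0$, the exceptional curve. Since $P$ lies on exactly $C_{j_1}$ and $C_{j_2}$, we get $b^*A = \sum_\ell (\beta_\ell - 1)\tilde C_\ell + (\beta_{j_1} + \beta_{j_2} - 2)\tilde C_0$. Hence the new pair has decorations $\beta_\ell$ on the strict transforms and $\beta_0 = \beta_{j_1} + \beta_{j_2}$ on $\tilde C_0$. The self-intersections change only for $C_{j_1}$ and $C_{j_2}$, where $\kappa$ increases by $1$, and $\tilde C_0$ has $\kappa_0 = 1$. The intersection pattern changes only in that $C_{j_1}$ and $C_{j_2}$ no longer meet and instead each meets $\tilde C_0$ once.

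For (a) we check the allowed conditions curve by curve. Suppose $\beta_\ell = 0$ for some $\ell \notin \{0, j_1, j_2\}$. Then its neighbours and their decorations are unchanged, so the condition holds before exactly when it holds after.

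Next suppose $\beta_{j_1} = 0$ on $S$. Allowedness forces $\beta_{j_2} \neq 0$. On $\tilde S$, $C_{j_1}$ still has exactly two neighbours, namely $\tilde C_0$ with $\beta_0 = \beta_{j_2} \neq 0$ in place of $C_{j_2}$, so the condition persists. The converse direction is the same argument read backwards.

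Now consider $\tilde C_0$. It is rational and meets exactly $C_{j_1}$ and $C_{j_2}$. If $\beta_0 = 0$, we need $\beta_{j_1}, \beta_{j_2} \neq 0$, and this holds automatically: $\beta_{j_1} = 0$ would force $\beta_{j_2} = 0$, which contradicts allowedness on either side. This inference needs a little care in the direction from $\tilde S$ to $S$; there one uses the condition at $C_{j_1}$, whose neighbour $\tilde C_0$ would then have decoration $0$.

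For (b) we split into cases according to which decorations vanish and compute the difference of the $R$-expressions. The strata removed are the point $P$, with factor $\frac{(\LL-1)^2}{(\LL^{\beta_{j_1}}-1)(\LL^{\beta_{j_2}}-1)}$ when both decorations are nonzero. The strata added are $\tilde C_0^\circ \cong \PP^1 \setminus \{2\text{ pts}\}$, of class $\LL - 1$, and two points, contributing
\[
(\LL-1)\frac{\LL-1}{\LL^{\beta_0}-1}\left(1 + \frac{\LL-1}{\LL^{\beta_{j_1}}-1}\cdot\frac{1}{\LL-1}(\LL-1) \cdots\right).
\]
The generic case, where all three decorations are nonzero, is the classical identity
\[
(\LL-1) + \frac{\LL-1}{\LL^{\beta_{j_1}}-1} + \frac{\LL-1}{\LL^{\beta_{j_2}}-1} = \frac{(\LL-1)(\LL^{\beta_{j_1}+\beta_{j_2}}-1)}{(\LL^{\beta_{j_1}}-1)(\LL^{\beta_{j_2}}-1)},
\]
as in \eqref{eqn-ThreeTermResidue-1}.

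The main obstacle is the cases involving a zero decoration, since there the $\kappa$-terms must absorb the discrepancy.

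\emph{Case $\beta_0 = 0$.} The new term is $\kappa_0 = 1$ times $\frac{(\LL-1)^2}{(\LL^{\beta_{j_1}}-1)(\LL^{\beta_{j_2}}-1)}$. This exactly replaces the point $P$, and the strata $\tilde C_0^\circ$ and $\tilde C_0 \cap C_{j_k}$ are excluded from the first sum.

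\emph{Case $\beta_{j_1} = 0$.} On $S$ the point $P$ did not appear in the first sum, and $C_{j_1}$ contributed $\kappa_{j_1}\frac{(\LL-1)^2}{(\LL^{\beta_{j_2}}-1)(\LL^{\beta_{k}}-1)}$, where $C_k$ is its other neighbour. On $\tilde S$ this becomes
\[
(\kappa_{j_1}+1)\frac{(\LL-1)^2}{(\LL^{\beta_{j_2}}-1)(\LL^{\beta_k}-1)},
\]
using $\beta_0 = \beta_{j_2}$. The new first-sum strata are $[\tilde C_0^\circ]\frac{\LL-1}{\LL^{\beta_{j_2}}-1}$ and the point $\tilde C_0 \cap C_{j_2}$. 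Checking that the difference vanishes uses the relation $\beta_{j_2} + \beta_k = 0$ from the remark after Definition \ref{allowed}; this gives $\frac{1}{\LL^{\beta}-1} + \frac{1}{\LL^{-\beta}-1} = -1$, which matches the extra $\kappa$-term against the new strata.

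I expect this last bookkeeping, tracking the sign identity $\beta_{j_2} = -\beta_k$, to be the delicate step.
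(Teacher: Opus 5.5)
Your argument is correct. The paper gives no proof of its own here (it quotes Veys), and your direct comparison of the $R$-expressions near the blown-up point $P$ is the standard way to establish the lemma. The three cases go through as you say:
\begin{itemize}
\item the generic case is the identity $1+\frac{1}{\LL^{a}-1}+\frac{1}{\LL^{b}-1}=\frac{\LL^{a+b}-1}{(\LL^{a}-1)(\LL^{b}-1)}$;
\item the case $\beta_0=0$ is absorbed by the new term with $\kappa_0=1$;
\item the case $\beta_{j_1}=0$ reduces, via $\beta_{j_2}+\beta_k=0$, to $\frac{1}{\LL^{b}-1}+\frac{1}{\LL^{-b}-1}=-1$.
\end{itemize}
Your case list is exhaustive, since $\beta_{j_1}=\beta_{j_2}=0$ is excluded by allowedness and $\beta_{j_2}=0$ is symmetric to $\beta_{j_1}=0$.

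Three small things to tighten.

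\emph{The generic-case display.} Your garbled display for the new strata should simply read $\frac{\LL-1}{\LL^{\beta_0}-1}\bigl((\LL-1)+\frac{\LL-1}{\LL^{\beta_{j_1}}-1}+\frac{\LL-1}{\LL^{\beta_{j_2}}-1}\bigr)$. Your identity turns this into $\frac{(\LL-1)^2}{(\LL^{\beta_{j_1}}-1)(\LL^{\beta_{j_2}}-1)}$, which is the old contribution of $P$.

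\emph{The converse in (a) when $\beta_{j_1}=0$.} This is not literally ``the same argument read backwards''. On $\tilde S$ you must exclude that the second neighbour of $C_{j_1}$ is $\tilde C_{j_2}$ itself; otherwise $C_{j_1}$ meets only one curve on $S$. Adjunction on $\tilde S$ rules this out: it would force $\beta_0+\beta_{j_2}=0$, i.e.\ $2\beta_{j_2}=0$, contradicting allowedness at $C_{j_1}$.

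\emph{The reading of ``exactly two curves''.} Throughout you are, correctly, reading ``exactly two curves intersect $C_i$'' as ``$C_i$ meets the rest of $C$ in exactly two points, lying on two distinct curves''. This reading is what makes the remark $\beta_{i_1}+\beta_{i_2}=0$ valid. It also guarantees $C_{j_1}\cap C_{j_2}=\{P\}$ whenever one of the two curves has decoration $0$, which your bookkeeping uses. It is worth stating this explicitly: under a literal reading allowing $C_{j_2}$ to meet $C_{j_1}$ twice, part (a) would fail.
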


\subsection{Theorem \ref{thm-noBadPole}, general case}
\emph{Proof of Theorem \ref{thm-noBadPole} when $a_1 a_2 a_3 \neq 0$.}
    We can now conclude the proof of the general case of Theorem \ref{thm-noBadPole}.  The residue of $-3/d$ is (up to a nonzero constant factor) an expression of the form (\ref{R-expression}) for the curve configuration on $F_0$.
All blow-ups constituting $g$ have a centre that is an intersection of two curves in the curve configuration;  hence by Lemma \ref{blowup} that residue is equal to the expression $R_\Sigma (C,A)$ for the curve configuration on $\Sigma$.

So we must show $R_\Sigma (C,A) = 0$. Keep the names $E$ (resp. $E^\prime$) for our two sections of $\Sigma \to B$; by construction their decorations have not changed, remaining $\alpha$ (resp. $\alpha^\prime$). Use the names $G_1, \dots, G_{m+2}$ for the other components of $C$. Each $G_i$ is a fiber of $\Sigma \to B$, and so $G_i \cdot G_i = 0$. In particular,
\begin{align*}
    R_\Sigma(C, A) = [\Sigma \setminus C] &+ [E^\circ] \frac{\LL - 1}{\LL^\alpha - 1} + [(E')^\circ] \frac{\LL - 1}{\LL^{\alpha^\prime} - 1}
    \\ &+ \sum_{\substack{1 \leq i \leq m+2 \\ \beta_i \neq 0}} \frac{\LL - 1}{\LL^{\beta_i} - 1}  \bigg( [G_i^\circ] + \frac{\LL - 1}{\LL^\alpha - 1} + \frac{\LL - 1}{\LL^{\alpha^{\prime}} - 1} \bigg).
\end{align*}
Since every component of $C$ is rational, $[G_i^\circ] = \LL - 1$ and $[E^\circ] = [(E^\prime)^\circ] = \LL - m - 1$. As $\Sigma \to B \simeq \PP^1$ is a ruled surface, $[\Sigma \setminus (\cup_i G_i)] = [\PP^1] (\LL - m - 1)$ and so $[\Sigma \setminus C] = (\LL - m - 1)(\LL - 1)$. We obtain
\begin{equation*}
    R_{\Sigma}(C,A) = \bigg( 1 + \frac{1}{\LL^\alpha - 1} + \frac{1}{\LL^{\alpha^\prime} - 1} \bigg) \bigg( (\LL - m - 1) (\LL - 1) + \sum_{\substack{1 \leq i \leq m+2 \\ \beta_i \neq 0}} \frac{(\LL - 1)^2}{\LL^{\beta_i} - 1} \bigg).
\end{equation*}
Finally one easily verifies that the first multiplicand, and hence the whole expression, is identically zero, using that $\alpha +\alpha' =0$. \qed


\begin{remark}
    The above reasoning also leads to a more conceptual proof that $\alpha + \alpha^\prime = 0$. The idea is reminiscent of how we found relations among the $\alpha$-terms in Lemma \ref{lem:alphaFacts} using the adjunction formula, except now we use Lemma \ref{blowup} and apply the adjunction formula to the canonical divisor of the ruled surface $\Sigma$.
\end{remark}

\begin{remark} \,
\begin{enumerate}[label=(\alph*)]
    \item In order to prove Theorem \ref{nonpositive euler}, the notion of allowed pair as in Defintion \ref{allowed} has to be extended, allowing curves with $\alpha_i=0$ to intersect any number of curves, see \cite[1.4]{VeysCrelle}.  This more general notion is still \lq almost always\rq\ invariant under blow-ups: there is one exceptional situation where the generalized expression $R_S (C,A)$ changes, more precisely when the centre of blow-up is a point on a component with $\alpha_i=0$, but not on any other component, see \cite[Lemma 1.5]{VeysCrelle}.
    \item The basis of the proof of Theorem \ref{nonpositive euler} is a structure theorem \cite[Theorem 3.5]{VeysStructure} for a normal crossing curve configurations on a rational nonsingular projective surface $S$, with nonpositive Euler characteristic of the complement. The statement is roughly: there exist a sequence of blow-ups $b$ and afterwards a sequence of blow-downs $g$, all \lq within the curve configuration\rq, transforming $S$ into a ruled surface $\Sigma$, with three possible end configurations, including the one on $\Sigma$ above.

    Using this structure theorem, the main point in the proof of Theorem \ref{nonpositive euler} is then showing that the exceptional blow-up/blow-down in (a) does not occur within $b$ or $g$ in the structure theorem. In particular, the fact that {\em all} $\alpha_i \neq 0$ on $S$ is crucially used in the proof that the exceptional case does not occur within $g$!
\end{enumerate}
\end{remark}

\subsection{Theorem \ref{thm-noBadPole}, all cases} \emph{Proof of Theorem \ref{thm-noBadPole}}.
Finally, we treat the special cases where one or more of the $a_i$ are zero, i.e., where not all coordinate hyperplanes are in the support of $f=0$, or, equivalently, not all coordinate lines are in the support of $D$.

Then we consider the same map $\pi : F_0\to F$, which is certainly an embedded resolution of $D$. We consider also the same configuration of curves on $F_0$ as before, that is, we add the missing $E_x$, $E_y$ or $E_z$ to the total inverse image of $D_{\red}$. Associating the value $\alpha_j=1$ to these curves, the residue of $-3/d$ as candidate pole for $Z^{\mot}_f(s)$ is given (up to a nonzero constant factor) by a similar expression $R_S (C,A)$ for the surface $F_0$.  That is, we can view these cases just as special cases of the general case, with some $\alpha$-value(s) equal to $1$. \qed

\begin{example}\label{conicsexample}
Let $f=(yz - x^2)(yz-x^2+y^2)$, then the associated projective curve $D\subset \PP^2\cong F$ consists of two conics, intersecting in one point.
Note thus that the Euler characteristic $\chi (\PP^2 \setminus D) =0$.

 The minimal embedded resolution $\pi:F_0\to F$ of $D$ is a composition of four blow-ups with consecutive exceptional curves $E_1,E_2,E_3,E_4$. See Figure \ref{fig:embedResTwoConics}. In the induced embedded resolution $h:Y \to \CC^3$ of  $\{f=0\}$, the two components of the strict transform have numerical data $(1,1)$, and the exceptional components $F_i$ with data $(N_i,\nu_i)$ are $F_0(4,3)$,
$F_1(2,2)$, $F_2(4,3)$, $F_3(6,4)$, $F_4(8,5)$. Denoting as before $\alpha_i=\nu_i - (\nu_0/N_0) N_i$, we have in particular that $\alpha_1= 1/2$, $\alpha_2=0$, $\alpha_3=-1/2$.

\begin{figure}[h]
\includegraphics[scale=.25]{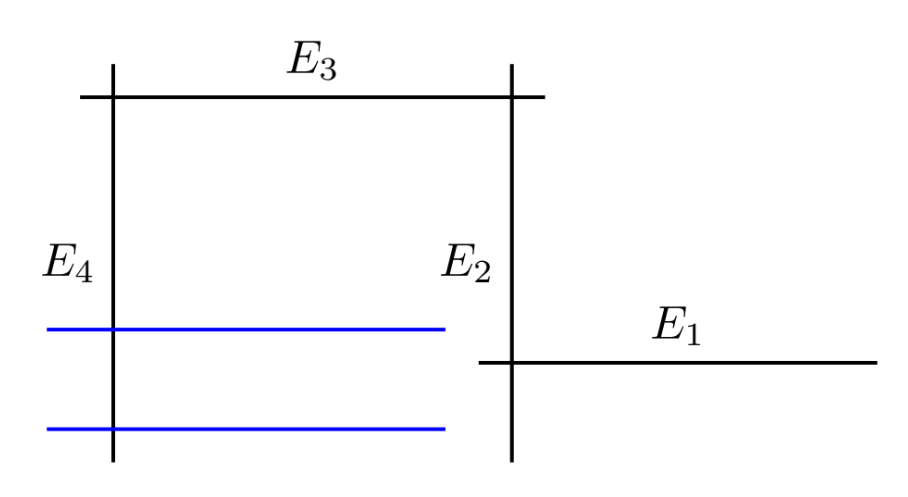}
\centering
\caption{The minimal embedded resolution of the two conics $D \subseteq \PP^2 \simeq F$. The exceptional curves are black with labels $E_1, E_2, E_3, E_4$, whereas the two strict transforms are blue and unlabeled.}
\label{fig:embedResTwoConics}
\end{figure}

So $s_0= -3/d=-3/4$ is a candidate pole of order $2$ for $Z^{\mot}_f(s)$, which by an easy computation turns out to be an effective pole of order $1$. However,  except for the fact that $\alpha_2=0$, all conditions in Theorem \ref{nonpositive euler} are satisfied.
(Note that this conics configuration is a \lq standard example\rq\ in this world; see for instance also  \cite[Example 2.14]{BCLH_MonodromySomeSurface}.)

\end{example}

\begin{remark} The conclusion of Theorem \ref{thm-noBadPole} for the topological zeta function $Z_{f,0}^{\text{top}}(s)$
admits an elementary proof. We sketch this in the case $a_1 a_2 a_3 \neq 0$.

Note first that the analogue of Proposition \ref{prop-explicitMotivicFml-Semisimple} for the topological zeta function is
\begin{align}\label{explicitTopologicalFml-Semisimple}
	&Z_f^{\text{top}}(s) = Z_{f,0}^{\text{top}}(s) \\
&=\frac1{3+ds} \left( \chi(E_{\emptyset}^\circ) +
\sum_{j\in T}  \frac{\chi(E_j^\circ)}{\nu_j+N_js}
 +  \sum_{i \neq j\in T}\frac{\chi(E_i \cap E_j)}{(\nu_i +N_is)(\nu_j +N_js)} \right)\nonumber.
\end{align}
The strategy is to use rather a much more compact formula from \cite{VeysLogCanonical}, where the total contribution of chains of exceptional curves $E_i$, more precisely of the total contribution of each of the four \lq black chains\rq\ in Figure \ref{fig:EmbeddedResolution}, is given by only one simple term. This has in fact a geometric meaning: we can contract each such chain (considered as a curve on $F_0$) to a point, which is then a cyclic quotient singularity on the resulting surface $\bar F$. (That resulting surface is the so-called relative log canonical model of the pair $(F, V)$).  Figure \ref{fig:Qres} shows the images of the remaining components in $\bar F$, and the four quotient singularities, indicated by a black bullet. (Note: when $u =1$ or $u = t-1$, one of the `black chains' is missing. This is no problem as then some chain never got contracted and the `quotient singularity' at the corresponding point is actually smooth.)

In general, say the end curves of a contracted chain $E_\ell, \ell \in C (\subset T),$ intersect the curves $E_i$ and $E_j$, and let $\Delta$ denote the absolute value of the determinant of the intersection matrix of the curves $E_\ell$ in the chain. Then by \cite{VeysLogCanonical} the total contribution of the chain $C$ to  (\ref{explicitTopologicalFml-Semisimple}), namely
\begin{equation*}
    \frac{1}{3+ds} \left( \sum_{\ell \in C} \frac{\chi(E_\ell^\circ)}{v_\ell + N_\ell s} + \sum_{k\in T, \ell \in C, k\neq \ell}\frac{\chi(E_k \cap E_\ell)}{(v_k + N_k s)(v_\ell + N_\ell s)} \right),
\end{equation*}  is
\begin{equation}\label{compact formula}
\frac1{3+ds}\cdot\frac {\Delta}{(\nu_i+N_is)(\nu_j+N_js)}.
\end{equation}

In our setting, these determinants are well known, and indicated on Figure \ref{fig:Qres}.  So the contributions of the points $P_y$, $P_x$, $P'_y$, $P'_z$ are $1/(3+ds)$ times
$$
\frac {t}{(\nu+Ns)(1+a_2s)}, \ \frac {u}{(\nu+Ns)(1+a_1s)}, \ \frac {t}{(\nu'+N's)(1+a_2s)} \ \frac {t-u}{(\nu'+N's)(1+a_3s)},
$$
respectively.

So using \eqref{compact formula} we can simplify \eqref{explicitTopologicalFml-Semisimple}:

\begin{align*} (3+ds)&
Z_f^{\text{top}}(s) =
\frac1{\nu+Ns} \Big(-m+\frac t{1+a_2s}+\frac u{1+a_1s}+\sum_{j=1}^m \frac1{1+b_js} \Big) \\
&+ \frac1{\nu'+N's} \Big(-m+\frac t{1+a_2s}+ \frac {t-u}{1+a_3s}+\sum_{j=1}^m \frac1{1+b_js} \Big) +\frac1{(1+a_1s)(1+a_3s)}.
\end{align*}
Here we have used the specific shape of our embedded resolution diagram: as each $E_j$ is rational, if $E_j \neq E, E^\prime$, then $\chi(E_j^\circ) = 0$ since the Euler characteristic of a twice punctured $\PP^1$ is zero. We have also used the specific shape of $V_0$: $\chi(E_\emptyset^\circ) = 0$ since $a_1 a_2 a_3 \neq 0$.

By our assumptions on the $a_i$ and the $b_j$, it is now clear that $-3/d$ is either a pole of order one or no pole. We now compute the residue of $Z_f^{\text{top}}(s)$ at $-3/d$ by repeatedly utilizing that $\alpha + \alpha^\prime = 0$:
\begin{align*}
&\frac1{\alpha} \Big(-m+\frac t{\alpha_y}+\frac u{\alpha_x}+\sum_{j=1}^m \frac1{\alpha_j} \Big)
+ \frac1{\alpha'} \Big(-m+\frac t{\alpha_y}+ \frac {t-u}{\alpha_z}+\sum_{j=1}^m \frac1{\alpha_j} \Big) +\frac1{\alpha_x\alpha_z} \\
 &=\frac u{\alpha\alpha_x} + \frac{t-u}{\alpha'\alpha_z} +\frac1{\alpha_x\alpha_z}
= \frac{ \alpha_z u -(t-u)\alpha_x + \alpha}{\alpha\alpha_x\alpha_z}.
\end{align*}

Now one checks that this residue vanishes. One option is an elementary computation using explicit expressions for $\alpha_x$, $\alpha_z$, and $\alpha$ (we calculated $(N, \nu)$ previously).

\begin{figure}[h]
\includegraphics[scale=.25]{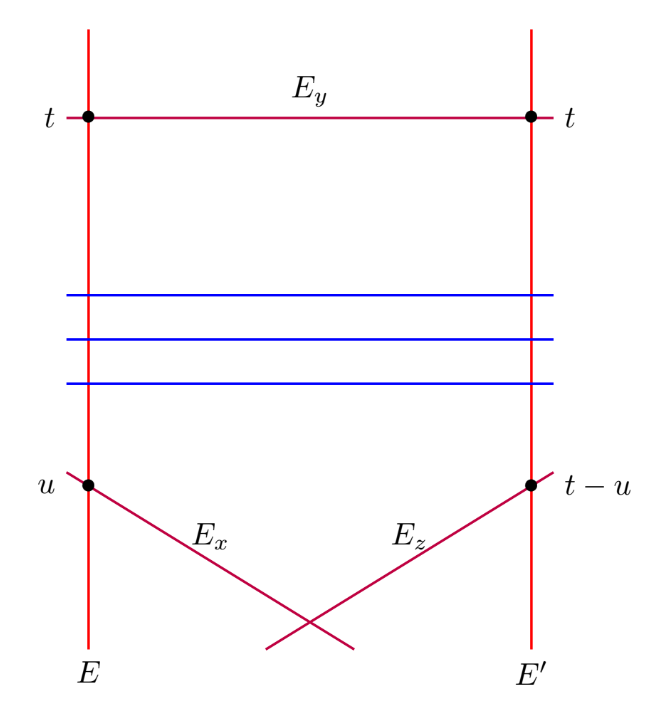}
\centering
\caption{The remaining components on the surface $\tilde{F}$, obtained by contracting the black `chains' in Figure \ref{fig:EmbeddedResolution}. The four black dots represent the cyclic quotient singularities $\{P_y, P_x, P_y^\prime, P_z^\prime\}$, each decorated with the corresponding determinant $\Delta$.}
\label{fig:Qres}
\end{figure}

\bigskip
One could imagine a possible proof for the motivic zeta functions using this strategy.  There is indeed a compact formula for the contribution of the chains above \cite{LMVV} that specializes to (\ref{compact formula}), but with a much more complicated numerator, being a sum of powers of $\LL$ that involve the variable $s$, floor functions and the exact structure of the quotient singularities. For arbitrary $t$ and $u$, it is not clear to us how to show vanishing of the residue this way.

\bigskip
Let us note that, when $a_1 a_2 a_3 \neq 0$, one can also use \cite[Theorem 2.15]{BCLH_MonodromySomeSurface} to conclude Theorem \ref{thm-noBadPole} for the topological zeta function, by checking that $D$ is a `bad' divisor in the language of loc. cit. This requires confirming $-3/d$ is not a pole of the topological zeta function at any point in $D_{\red, \Sing}$, something we (implicitly) concluded using relations among the $\alpha$-terms and the specific shape of our embedded resolution. It also requires $\chi(\PP^2 \setminus D_{\red}) \leq 0$, which depends on which $a_1, a_2, a_3$ vanish. The proof of \cite[Theorem 2.15]{BCLH_MonodromySomeSurface} appeals to a nontrivial classification of pencils of rational curves.

Regardless, there are examples where the motivic zeta function has poles the topological zeta function does not detect. So any proof of Theorem \ref{thm-noBadPole} in the topological setting need not imply Theorem \ref{thm-noBadPole} in the motivic setting.
\end{remark}

\subsection{Theorem \ref{thm-SMCinourCase}}

We deduce our main theorem.

\thmMain*


\begin{proof}
We may assume that $f$ is not normal crossing, since then the claim is straightforward. Now assume that $f$ is a decomposable hyperplane arrangement, i.e., after some homogeneous change of coordinates we may write $f = x^{\ell} f^\prime$ with $f^\prime \in \CC[y,z]$. (This includes the case $f$ is a cone, since we allow $\ell = 0.)$ This product structure implies the candidate poles (of both $Z_f^{\mot}(s)$ and $Z_{f,0}^{\mot}(s)$ are $-\frac{2}{\deg(f^\prime)}$ and $\{-\frac{1}{m_q}\}_q$, where $q$ runs over all the irreducible components of $\{f=0\}$ and $m_q$ counts their multiplicities. By looking at a generic point of each irreducible component, we confirm that $-\frac{1}{m_q}$ is a root of the Bernstein--Sato polynomial $b_f(s)$, see Remark \ref{rmk-BSfacts}. We claim $-\frac{2}{\deg(f^\prime)}$ is also a root of $b_f(s)$. Because of the product structure, $b_{f^\prime}(s)$ divides $b_f(s)$. And Proposition \ref{prop-DiffLinTypeRootCriterion} certifies that $-\frac{2}{\deg(f^\prime)} \in \Zeroes(b_{f^{\prime}}(s))$.

So we may assume that $f$ is not a decomposable hyperplane arrangement. In particular, it is not a cone nor normal crossing. Proposition \ref{prop-SMCCriterion}, along with Theorem \ref{thm-EnumeratingEqnswithBadBSPoly}, then reduces our task to proving the following statement: if $f$ is in semi-simple standard form, is not a hyperplane arrangement, and satisfies the non-traceless condition, and if $-3/d \notin \Zeroes(b_D(s))$, then $-3/d$ is not a pole of $Z_f^{\mot}(s)$ nor $Z_{f,0}^{\mot}(s)$. Note $-3/d \notin \Zeroes(b_D(s))$ implies all $\frac{1}{a_i}$ and $\frac{1}{b_i}$ are different from $\frac{3}{d}$. So we are done by Theorem \ref{thm-noBadPole}.
\end{proof}

\begin{remark} \label{rmk-nonTracelessMystery}
    This manuscript shows that the existence of semi-simple non-traceless logarithmic derivations clearly has content, especially with respect to Bernstein--Sato polynomials, and this is codified by the non-traceless condition \eqref{eqn-nonTracelessCondition-in-proof}, see Proposition \ref{prop-nonTracelessCondition}. However, this non-traceless condition remains mysterious for us. It does have `some' birational content: by Lemma \ref{lem-TracelessBirational}, in our preferred embedded resolution of $V_0$ there are exactly two components $E, E^\prime$ of $\pi^{-1}(V_0)$ that (may) intersect three components, and the non-traceless condition is equivalent to $\alpha_E \neq 0 \neq \alpha_{E^\prime}$. We crucially use this when proving Theorem \ref{thm-noBadPole}.

    It would be very interesting to find a birational interpretation of non-traceless logarithmic derivations in general.
\end{remark}


\bibliographystyle{abbrv}
\bibliography{refs}

\end{document}